\documentclass{gen-j-l}

\usepackage{graphicx}
\usepackage{enumitem}
\usepackage{gensymb}
\usepackage{csquotes}
\usepackage{amssymb}
\usepackage{calc}
\usepackage{mdframed}
\usepackage{xcolor}

\mdfdefinestyle{greybox}{linewidth=0,backgroundcolor=black!10}

\numberwithin{figure}{subsection}

\newtheorem{theorem}{Theorem}[subsection]
\newtheorem{lemma}[theorem]{Lemma}
\newtheorem{conjecture}[theorem]{Conjecture}

\theoremstyle{definition}
\newtheorem{definition}[theorem]{Definition}
\newtheorem{method}[theorem]{Method}
\newtheorem{fact}[theorem]{Fact}
\newtheorem{problem}[theorem]{Problem}
\newtheorem{example}[theorem]{Example}

\theoremstyle{remark}

\DeclareMathOperator{\Imb}{Imb}
\DeclareMathOperator{\imb}{imb}
\DeclareMathOperator{\dist}{dist}
\DeclareMathOperator{\inj}{inj}

\setlength{\parskip}{0.5em}

\newcommand{\degrees}{\degree}

\newenvironment{itemize*}
  {\begin{itemize}[topsep=-\parskip+\jot]}
  {\end{itemize}}
  
\newenvironment{enumerate*}
  {\begin{enumerate}[label=(\alph*),topsep=-\parskip+\jot]}
  {\end{enumerate}}
  
\begin{document}

\title{Geodesic nets: some examples and open problems}

\author{Alexander Nabutovsky \and Fabian Parsch}
\address{Department of Mathematics, University of Toronto, 40 St. George Street, Toronto, ON M5S 2E4, Canada}
\email{alex@math.toronto.edu, fparsch@math.toronto.edu}
\copyrightinfo{2019}{Alexander Nabutovsky and Fabian Parsch}

\begin{abstract}
Geodesic nets on Riemannian manifolds form a natural class of stationary objects generalizing geodesics. Yet almost nothing is known about their
classification or general properties even when the ambient Riemannian
manifold is the Euclidean plane or the round $2$-sphere. 

In the first half of this paper we survey some results and open questions (old and new) about geodesic
nets on Riemannian manifolds. Many of these open questions are about geodesic nets on the Euclidean plane. The second half contains a 
partial answer for one of these questions, namely, a description of a new infinite family of geodesic nets on the Euclidean plane with 14 boundary (or unbalanced) vertices and arbitrarily many inner (or balanced) vertices of degree $\geq 3$. 
\end{abstract}

\maketitle

\section{Overview}
\label{sect:overview}

\subsection{Geodesic nets and multinets: definition} Let $M$ be a Riemannian manifold, $S$ a finite (possibly empty)
set of points in $M$, and $G$ a finite multigraph (or, more formally,
a finite $1$-dimensional cell complex).
A {\it geodesic net modelled on} $G$ {\it with vertices} $S$ is a smooth embedding $f$ of $G$ into $M$ such that:
\begin{enumerate}
\item Every point from $S$ is the image under $f$ of a vertex of $G$;
\item For each $1$-parametric flow $\Phi_t,\ t\in(-\epsilon,\epsilon),$ of diffeomorphisms of $M$ fixing all points of  $S$ with $\Phi_0=Id$, $t=0$ is the critical point of
the function $l(t)$ defined as the length of $\Phi_t(f(G))$.
\end{enumerate}

Less formally, geodesic nets on $M$ are critical points (not necessarily local minima!) of the length functional on the space of embedded multigraphs into $M$, where a certain
subset of the set of vertices must be mapped
to prescribed points of $M$.

The simplest example of geodesic nets arises when $S$ is a set of
two points $x,y$, and $G=[0,1]$ the graph with two vertices and one edge. In this case, the geodesic nets modelled on $G$ with vertices
$x,y$ are precisely non-self-intersecting geodesics in $M$ connecting $x$ and $y$. Self-intersecting geodesics can be modelled on more 
complicated graphs, but if we wish to model them on the same $G$ it
makes sense to modify our definition of geodesic nets by allowing
$f$ to be only an immersion on the union of interiors of edges.
In other words, one may allow edges to self-intersect and to intersect each other. In particular, we are allowing that two different
edges between the same pair of vertices might have the same image.
As the result, 
the images of edges of $G$ in $M$ acquire
multiplicities that can be arbitrary positive integer numbers.
In this paper we are going to call geodesic nets defined
using immersions rather than embeddings of multigraphs {\it geodesic multinets}.

Applying the first variation formula for for the length functional
we see that the above definition of a geodesic (multi)net is equivalent to the following:

\begin{definition}
Let $S$ be a (possibly empty) finite set of points in a Riemannian manifold $M$.
A geodesic net on $M$ consists of a finite set $V$ of points
of $M$ (called vertices) that includes $S$ and a finite set $E$ of non-constant distinct geodesics between vertices (called edges) so that for every
vertex $v\in V\setminus S$ the following balancing condition
holds:
Consider the unit tangent vectors at $v$ to all edges incident
to $v$. Direct each tangent vector from $v$ towards the other endpoint 
of the edge. Then the sum of all these tangent vectors must be equal
to $0\in T_vM$. Further, edges are not allowed to intersect or self-intersect.
Geodesic multinets are defined in the same way with the two following
distinctions: 1) Edges are allowed to intersect and self-intersect;
2) Each edge is endowed with a positive integer multiplicity;
the tangent vector to an edge enters the sum
in the balancing condition at each its endpoint with the
multiplicity equal to the multiplicity of the corresponding edge.
\end{definition}

Vertices in $S$ are called {\it boundary} or {\it unbalanced},
vertices in $V\setminus S$ are called {\it inner}, or {\it free}, or {\it balanced}
(as the balancing condition must hold only at each vertex in $V\setminus S$). If $v$ is an unbalanced vertex, then the sum of all unit tangent vectors to edges incident to $v$ need not be
equal to the zero vector. We call this sum {\it the imbalance
vector} $\Imb(v)$ at $v$, and its norm {\it the imbalance}, $\imb(v)$, at $v$. The sum of imbalances $\sum_{v\in S}\imb(v)$ over the set of all unbalanced
points is called {\it the total imbalance} of the geodesic net. It is
convenient to define $\Imb(v)$ also at balanced vertices as zero vectors
in $T_vM$. 

For the rest of the paper we are going to require that no balanced vertex is isolated,
that is, has degree zero. As the degree of a balanced vertex
clearly cannot be one, we see that the minimal degree of a balanced
vertex becomes two. The balancing condition implies that for any balanced vertex of degree $2$, its two incident edges can be merged into a single geodesic. Conversely, we can subdivide each edge of
a geodesic net by inserting as many new balanced vertices of degree $2$
as we wish. As now the role of balanced vertices of degree $2$ in the
classification of geodesic nets is completely clear, we are going
to consider below only geodesic nets where {\bf all balanced vertices
have degree $\geq 3$}. It is clear that we can add or remove
geodesics connecting unbalanced vertices at will without affecting
the balancing condition at a balanced vertex. Therefore, we agree that
all considered geodesic nets {\bf do not contain edges between
unbalanced vertices.}
Our final convention is that we are going to consider here only
{\bf connected geodesic (multi)nets} (as the classification
of disconnected nets obviously reduces to classification of their
connected components).

\subsection{Geodesic nets in Euclidean spaces.}
A significant part of this paper will be devoted to geodesic nets
in Euclidean spaces, in particular, in the Euclidean plane. In this
case above definition says that a geodesic multinet is a graph $G=(V,E)$ in $\mathbb{R}^n$ such that 1) $S$ is a subset of the set of vertices $V$, 2) each edge
is a straight line segment between its endpoints, and is
endowed with a positive integer multiplicity $n(e)$, 3) For each
vertex $v\in V\setminus S$, there is zero imbalance, i.e. $\sum_{e\in I(v)}n(e)\frac{e}{\Vert e\Vert}=0$, where $I(v)$ denotes the set of edges incident
to $v$, and $e$ denotes an edge regarded as the vector in $\mathbb{R}^n$ directed from $v$ towards the other endpoint.
For geodesic nets, all $n(e)$ must be equal to $1$, and, in addition,
different edges are not allowed to intersect.
Figure \ref{fig:balancedvertices} depicts examples of 
balanced points of degrees $3$, $4$ and $7$ in $\mathbb{R}^2$.
It is easy to see that 1) the angles between edges incident to
a balanced vertex of degree $3$ are always equal to $120$ degrees.
(This will be true not only for $\mathbb{R}^2$ but for all ambient
Riemannian manifolds $M$.) 2) A balanced vertex $v$ of degree $4$
in the Euclidean plane is a point of intersection of two straight line
segments formed by two pairs of incident edges at $v$ (see Figure \ref{fig:balancedvertices}).

\begin{figure}
	\centering
	\includegraphics{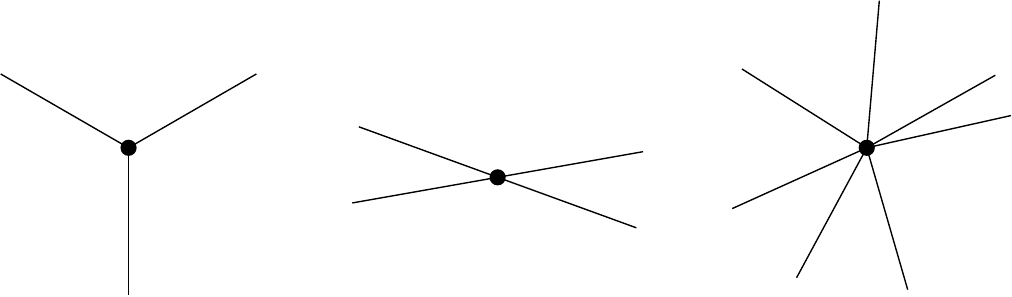}
	\caption{Examples for balanced vertices of degree $3$, $4$ and $7$.}
	\label{fig:balancedvertices}
\end{figure}

Here are some other easily verified facts about geodesic (multi)nets in
Euclidean spaces:

\begin{enumerate}
\item Each geodesic multinet is contained in the convex hull
of its unbalanced vertices. 
\item As a corollary each geodesic (multi)net
with two boundary vertices is simply the straight line segment connecting
these points (that can be endowed with any positive integer multiplicity in the case of geodesic multinets).
\end{enumerate}

Therefore, the interesting part of the classification of geodesic (multi)nets in the Euclidean plane starts from the case of three boundary points. 

\begin{enumerate}
\setcounter{enumi}{2}
\item For each geodesic (multi)net in $\mathbb{R}^n$ we can consider
$\Imb(v)$ as vectors in the ambient space $\mathbb{R}^n$. 
Therefore, in this case one can also define the total
imbalance vector. Yet this vector is always zero:
$$\sum_{v\in S}\Imb(v)=\sum_{v\in V}\Imb(v)=0,\ \ \ (1.2.1)$$

\par\noindent
Indeed, the second sum can be represented as the sum 
of contributions of individual edges. Each edge contributes
two oppositely directed vectors that enter sums in the definition
of imbalance vectors at its endpoints. Therefore, the sum over edges of edge contributions is zero.

\item For a geodesic (multi)net $N$ in a Euclidean space its length
$L(N)$ is given by the following formula:
$$L(N)=-\sum_{v\in S}\langle v , \Imb(v)\rangle.\ \ \ (1.2.2)$$

In the right hand side we perform the summation over the set of
all unbalanced vertices; each vertex is also regarded as a
vector in $\mathbb{R}^n$.\\ 
{\bf Proof:} In order to prove this formula, first
observe that the right hand side does not change when we change
the origin of the coordinate system in $\mathbb{R}^n$. (This
easily follows from the formula (1.2.1).) Therefore, we can
assume that the origin is not on the net. For each positive
$r$ let $D_r$ denote the ball of radius $r$ centered at the origin, $\partial D_r$ its boundary, $E(r)$ the set of edges of the
net intersecting $\partial D_r$. For each $e\in E(r)$ let $e(r)$ denote
the point of intersection of $e$ and $\partial D_r$. Formula (1.2.2)
is an immediate corollary of the following formula, when it is applied to very large values of $r$:
$$L(N\bigcap D_r)=\sum_{e\in E(r)}\langle e(r), \frac{e}{\Vert e\Vert}\rangle-\sum_{v\in D_r}\langle v, \Imb(v)\rangle.\ \ \ (1.2.3)$$
In this formula we regard $e$ also as a vector in $\mathbb{R}^n$. We
choose its direction from $e(r)$ towards the interior of $D_r$.
This formula obviously holds, when $r$ is small, as both sides
are equal to zero. Define special values of $r$ as those, where
$\partial D_r$ is either tangent to one of the edges or
passes through one of the vertices. There are only finitely
many special values of $r$. Our next observation which is easy
to verify is that the right hand side of (1.2.3) changes continuously, when $r$ passes through its special value. (Obviously, one needs only to check what happens if $\partial D_r$ passes through a balanced or an unbalanced vertex.)
Now we see that it is sufficient to check that the derivatives
of the right hand side and the left hand side with respect to $r$ at each non-special point coincide. Each of these derivatives
will be a sum over edges in $E(r)$. To complete the proof it is sufficient to verify that the contributions of each edge to both sides are the same. Each such edge $e$ contributes
$\frac{1}{\cos\theta_e(r)}$ to the derivative of the left hand
side, where $\theta_e(r)$ denotes the angle between $e$ and $e(r)$. Its contribution to the right hand side is $(r\cos \theta_e(r))'=\cos\theta_e(r)-r\sin\theta_e(r)\frac{d\theta_e(r)}{dr}=\frac{1}{\cos\theta_e(r)},$ as an easy trigonometric argument implies that $\frac{d\theta_e(r)}{dr}=-\frac{\tan\theta_e(r)}{r}.$
This completes the proof of (1.2.3) and, therefore (1.2.2).
\end{enumerate}

\subsection{Steiner trees and locally minimal geodesic nets.}

\begin{figure}
	\centering
	\includegraphics{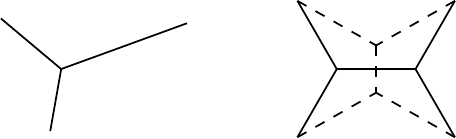}
	\caption{A geodesic net with $3$ unbalanced vertices and $1$ balanced vertex (the \textit{Fermat Point}). This is in fact the maximal number of balanced vertices when only given $3$ unbalanced vertices on the plane with a metric of nonpositive curvature. On the other hand, the two Steiner trees for four points are not maximal regarding the number of balanced vertices of a geodesic net with four unbalanced vertices.}
	\label{fig:threeandfournet}
\end{figure}

Study of geodesic nets was originally
motivated by the following question posed by Gau\ss: Given a set of
points on the plane, connect them by means of a graph of the minimal
possible length. It is easy to see that this graph is always a 
geodesic net modelled on a tree
(called the Steiner tree). This tree is a geodesic net, where
the given points are unbalanced points, but typically it also
contains new balanced vertices. It is easy to prove that all balanced
vertices of a Steiner tree have degree $3$. The first and the most
fundamental example is the case of three points $A$, $B$, $C$ on the plane
forming a triangle with angles $<120\degrees$. In this case there
exists the (unique) point $O$ in the triangle $ABC$ called the
Fermat point, such that the angles $AOB$, $BOC$ and $COA$ are all
equal to $120\degrees$. The Steiner tree will consist of three edges
$OA$, $OB$, and $OC$ (see Figure \ref{fig:threeandfournet}). The Steiner tree 
on four given (unbalanced) vertices might involve two extra (balanced)
vertices (see same figure). A geodesic net (in a Riemannian manifold) is called 
{\it locally minimal} if its intersections with all sufficiently small
balls are Steiner trees (for the set of points formed by all intersection of
the geodesic net with the boundary circle and all unbalanced points in the circle). For geodesic nets in the Euclidean plane
the local minimality is equivalent to the requirement that all balanced
points have degree $3$. The locally minimal geodesic nets in Euclidean
spaces and, more generally, Riemannian manifolds were 
extensively investigated
by A. Ivanov and V. Tuzhilin (cf. \cite{ivanovtuzhilinbook}, \cite{ivanovtuzhilinreview}). (Note that although general
geodesic net are not locally minimal with respect to this condition,
they are locally minimal in the following less restrictive sense: For each point $p$ on the net
and all sufficiently small $r$ the intersection of the net with the
ball of radius $r$ provides the global minimum of the length
among all trees {\it of the same shape} (i.e. star-shaped with the
same number of edges) connecting the boundary points.

The idea of minimization of length might seem useful if one wants
to construct a geodesic net with the set $S$ of boundary points modelled on a given graph $G$, say, in the Euclidean plane as follows:
Consider all embeddings of $G$ in the plane such that all edges 
are mapped into straight line segments, and a certain set of vertices
is being mapped to $S$. Yet the positions of other vertices are
variable, and we do not insist on balancing condition at any vertices.
Now we are going to minimize the total length of all edges of the
graph over the set of such embeddings. It is easy to see that the
total length will be a convex function and has the unique minimum.
Moreover, one can start with an arbitrary allowed embedding of $G$
and use an easy algorithm based on the gradient descent that numerically finds this minimum which will be always a geodesic net.
The problem is that in the process of gradient descent different vertices or edges can merge, and some edges can shrink to a point.
Then thee resulting graph will not be isomorphic to $G$ anymore. In fact, our numerical experiments seem to indicate that if one starts from a random allowed embedding of $G$ one typically ends at very simple geodesic nets such as, for example, the geodesic net with just one extra (balanced) vertex in the centre.

\subsection{Plan of the rest of the paper.} In section \ref{sect:closednets}
we survey closed geodesic nets on closed Riemannian manifolds.
In section \ref{sect:euclidnets} we survey geodesic nets on Euclidean spaces and 
Riemannian surfaces. The emphasize there will be on (im)possibility to majorize the number of balanced points in
terms of the number of unbalanced points ( and possibly, also
the total imbalance). Section \ref{sect:thestar} contains a rather long
construction of an infinite sequence of geodesic nets on the Euclidean plane with $1$ boundary vertices and arbitrarily
many balanced vertices. This sequence provides a partial answer for one
of the questions asked in section \ref{sect:euclidnets}.

\section{Closed geodesic nets.}
\label{sect:closednets}
Geodesic nets with $S=\emptyset$ are called {\it closed geodesic nets}.
The simplest examples of closed geodesic nets are periodic geodesics
(that can be modelled on any cyclic graph or the multigraph with one vertex and one loop-shaped edge) or, more generally, unions of periodic geodesics. The simplest example of a closed geodesic net not containing
a non-trivial periodic geodesic is modelled on the $\theta$-graph
with $2$ vertices connected by $3$ distinct edges. The corresponding
closed geodesic net consists of two vertices connected by $3$ distinct geodesics, so that all angles between each pair of geodesics at
each of the vertices are equal to $120\degrees$. J. Hass and F. Morgan (\cite{hassmorgan}) proved that for each convex Riemannian $S^2$ sufficiently
close to a round metric there exists a closed geodesic net modelled on
the $\theta$-graph. It is remarkable that this is the only known result
asserting the existence of closed geodesic nets not composed of periodic geodesics on an open (in $C^2$ topology) set of Riemannian metrics on a closed manifold!

\begin{problem}
Is it true that each closed Riemannian manifold
contains a closed geodesic multinet not containing a non-trivial periodic
geodesic?
\end{problem}

The standard Morse-theoretic approach to constructing periodic geodesics fails when applied to constructing closed geodesic nets, as any gradient-like flow might make the underlying
multigraph to collapse to a (possibly mutiple) closed curve and, thus, yields only a periodic geodesic.

A classification of shapes of closed geodesic nets on specific closed
Riemannian surfaces is aided by the Gau\ss-Bonnet theorem and the
obvious observation that if a geodesic net on, say, a Riemannian
$S^2$ is modelled on a graph $G$, then $G$ must be planar. Using
these observations A. Heppes (\cite{heppes}) classified all closed geodesic nets
on the round $S^2$, where all vertices have degree $3$ (there are just
nine possible shapes). 
On the other hand, we are not
aware of any restrictions on shapes of closed Riemannian manifolds
of dimension $>2$.

The first question one might ask about closed geodesic nets in Riemannian manifolds
of dimension $\geq 3$ is the following:

\begin{problem}
Classify all $3$-regular graphs $G$ such that the round
$3$-sphere has a geodesic net modelled on $G$.
\end{problem}

Another reasonable question (which, of course, can also be
asked for surfaces) is:

\begin{problem}
Is it true that each closed Riemannian manifold of dimension
$\geq 3$ has a $\theta$-graph shaped closed geodesic net?
\end{problem}

To the best of our knowledge, nothing else is known about classification
of geodesic nets on round $S^2$. In particular, the answer for the
following problem posed by Spencer Becker-Kahn (\cite{becker}) is not known even
when $M$ is the round $2$-sphere.

\begin{problem}[Becker-Kahn]\label{problem:beckerkahn} Let $M$ be a closed Riemannian
manifold. Is there a function $f_M$ (depending on geometry
and topology of $M$) such that each closed geodesic net on $M$
of length $L$ has at most $f_M(L)$ (balanced) vertices?
\end{problem}

As we already noticed the set of closed geodesic nets includes periodic
geodesics as well as their unions. Yet the standard ``folk" argument involving
the compactness of the set of closed curves of length $\leq x$ parametrised by the arclength on a closed Riemannian manifold,
and a quantitative (Yomdin-style) version of the Sard-Smale
theorem
that implies that the set of non-constant periodic geodesics on
a generic closed Riemannian manifold is countable,
also implies the set of closed geodesic nets is countable as well.
So, closed geodesic nets are also ``rare". This fact might be at least
partially responsible for the scarcity of examples of closed geodesic
nets not containing  periodic geodesics.

Surprisingly, many extremely hard open problems about periodic
geodesics can be solved when asked about closed geodesic nets.
Here are some results about the existence of 
closed geodesic nets with interesting properties:

\begin{enumerate}
\item One of the authors (A.N.) and R. Rotman proved the
existence of a constant $c(n)$ such that each closed Riemannian
manifold $M^n$ contains a closed geodesic multinet of length $\leq c(n)vol(M^n)^{\frac{1}{n}}$. It also contains a closed geodesic
multinet of length $\leq c(n)$ diameter$(M^n)$ (\cite{NRshapes}). R. Rotman later
improved this result and proved that one can choose a closed
geodesic multinet satisfying these estimates that has a shape of
a flower, that is consist of (possibly) multiple geodesic loops
based at the same point (vertex) (\cite{Rflowers}). (Of course, the balancing
(stationarity) condition at this point must hold.)
\item Recently L. Guth and Y.Liokumovich (\cite{guthliok})
proved that for a generic closed Riemannian manifold the union
of all closed geodesic multinets must be a dense set. 
\end{enumerate}

Note that these results do not shed any light on the existence of closed geodesic nets that do not include any periodic geodesic
on closed manifolds as all closed geodesic
nets in these theorems might be just periodic geodesics.
Yet, in dimensions $>2$ it is completely unknown if either
of the quoted results from \cite{NRshapes} and \cite{guthliok} holds for periodic
geodesics instead of geodesic multinets.

Finally, note that closed geodesic multinets can be a useful tool
to study other minimal objects on general closed Riemannian
manifolds. For example, recently Rotman proved that for
each closed Riemannian manifold $M^n$ and positive $\epsilon$, 
there exists a ``wide" geodesic loop on $M^n$ with the angle
greater than $\pi-\epsilon$ so that its length is bounded only
in terms of $n$, $\epsilon$ and the volume of $M^n$ (\cite{Rwide}). Alternatively,
one can also use the diameter of $M^n$ instead of its volume.
The proof involves demonstrating the existence of closed geodesic
multinets with certain properties. (Yet these nets can turn out to
be a periodic geodesic in which case the short wide geodesic loop
will be a short periodic geodesic as well.)

Our last remark about closed geodesic multinets is that
in some sense they can be considered a better $1$-dimensional analog of minimal surfaces in higher
dimensions than periodic geodesics. Indeed, minimal surfaces
tend to develop singularities. Their existence
is frequently proven through a version of Morse theory on 
spaces of cycles, where the resulting minimal surface
first arises as a stationary varifold. Similar arguments
using the space of $1$-cycles lead to proofs of existence
of closed geodesic multinets that can be regarded as a 
particularly nice class of stationary $1$-varifolds.
We refer the reader to \cite{allardalmgren} for properties of stationary
$1$-varifolds including a version of formula (1.2.2) (``monotonicity formula") valid
for stationary $1$-varifolds, and, therefore, closed
geodesic multinets on Riemannian manifolds.

\section{Geodesic nets and multinets in Euclidean spaces and Riemannian manifolds}
\label{sect:euclidnets}

Recall, that we agreed to consider only connected geodesic
nets with balanced vertices of degree $\geq 3$ and without edges running between unbalanced vertices. (However, it is still possible
that the union of two or more edges forms a segment between two
unbalanced vertices; of course, this segment will not be an edge.)

\subsection{Geodesic nets and multinets on the Euclidean plane
and more general Riemannian surfaces.} We are going to start
from the description of the following example (see Figure \ref{fig:weightedthree}):

\begin{figure}
    \centering
    \includegraphics{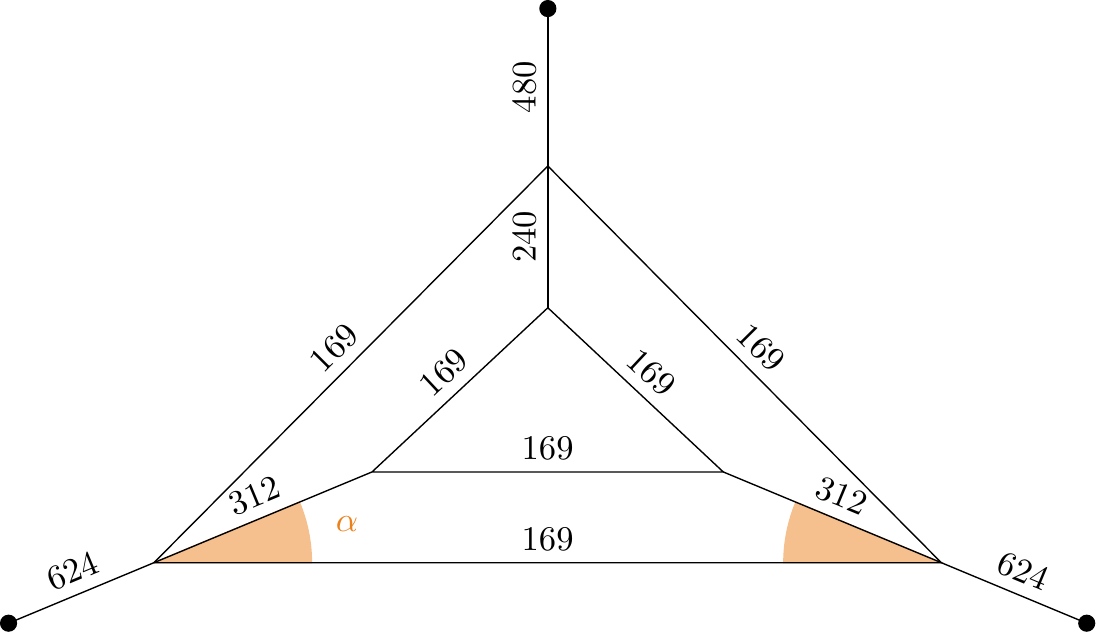}
    \caption{An example of a geodesic \emph{multi}net with three unbalanced vertices and six balanced vertices. Through continuous nesting, the number of balanced vertices can be increased arbitrarily. However, this is at the expense of additional imbalance at the three unbalanced vertices. In this example, $\cos\alpha=12/13$ and $\sin\alpha=5/13$}
    \label{fig:weightedthree}
\end{figure}

\begin{example}\label{ex:weightedthree}
Let $A_1A_2A_3$ be a triangle. Denote its angle at $A_i$ by $\alpha_i$. Assume that for each $i=1,2,3$,
$\cos \frac {\alpha_i}{2}$ is a rational number.
It is easy to produce an infinite set of such
triples of angles using Pythagorean triples of integers.
For example, we can take $\alpha_1=\alpha_2=2\arcsin(\frac{5}{13})$, and $\alpha_3=\pi-\alpha_1-\alpha_2$. Any choice of angles
$\alpha_i$ determines the triangle $A_1A_2A_3$ up to a similarity; the exact choice of its side lengths is not important
for us. As $\cos\frac{\alpha_i}{2}$ is rational, it can be written as $\frac{m_i}{n_i}$ for positive integer $m_i$ and 
$n_i$. Let $N$ denote $n_1n_2n_3$ and $N_i$ denote (integer)
$\frac{m_iN}{n_i}$. Further, let $0<r_1<r_2<\ldots <r_k<1$ be
any finite increasing sequence of positive numbers $<1$ and
$O$ denote the point of intersection of bisectors of angles
$\alpha_i$.
The set of vertices of a geodesic multinet that we are going
to describe looks as follows: It has three unbalanced vertices
$A_1,A_2$ and $A_3$. To describe its set of balanced vertices
consider $k$ homotheties of $A_1A_2A_3$ with center $O$ using
ratios $r_1,\ldots , r_k$. Denote the corresponding vertices
of the homothetic triangles by $A_1^jA_2^jA_3^j$, $j\in\{1,\ldots ,k\}$.
The set of balanced vertices of the geodesic multinet will
include all vertices $A_i^j$. Observe that for each
$i=1,2$ or $3$ vertices $A_i^j$ will subdivide $A_i^1A_i$ into
$k$ segments. We are going to denote these segments by $e_i^j$,
$j=1,\ldots , k$ where the numeration by superscripts $j$ goes
in the order from $A_i^1$ to $A_i$, so that $e_i^1=A_i^1A_i^2$ and
$e_i^k=A_i^kA_i$. All these segments $e_i^j$ will be edges
of the geodesic multinet; the weight of $e_i^j$ will be
equal to $2jN_i$. The set of edges of the multinet will
also include all sides of the triangles $A_i^jA_2^jA_3^j$; all
these edges will be endowed with the same weight $N$.
(Of course, we can then divide all weights by their g.c.d, if it is greater than $1$.)
Now an easy calculation confirms that we, indeed, constructed
(an uncountable family of) geodesic multinets with $3$ unbalanced
vertices and $3k$ balanced vertices, where $k$ can be arbitrarily
large. 

However, we would like to make the following observations:
\begin{enumerate}
\item The weights of at least some of the edges (e.g. $A_i^kA_i$) become unbounded, as $k\longrightarrow\infty$.
\item In fact, the total imbalance will increase linearly with $k$, as $k\longrightarrow\infty$.
\item The condition of rationality of the trigonometric functions
of $\frac{\alpha_i}{2}$ is very restrictive. We were able to 
carry out our construction only for a set of triples of
points $A_1$, $A_2$, $A_3$ of measure $0$ in the space of all
vertices of triangles in the Euclidean plane.
\end{enumerate}
\end{example}

Looking at this example, one might be led into thinking that the constructed
geodesic multinets with $3$ unbalanced vertices and arbitrarily
many balanced vertices can be converted into a geodesic net by some
sort of a small perturbation, where the balanced vertices
are replaced by ``clouds" of nearby points (with some extra edges inside each cloud), and all multiple
edges are replaced by close but distinct edges running between
chosen nearby ``copies" of their former endpoints. It is
easy to believe that such a perturbation plus, maybe, some
auxiliary construction will be sufficient to construct
examples of geodesic nets in the plane with $3$ unbalanced
vertices and an arbitrary number of balanced vertices.
Yet all such hopes are shattered by the following theorem
of one of the authors (F.P.):

\begin{theorem}[\cite{Pthree}]\label{thm:parschthree}
A geodesic net with $3$ unbalanced vertices $A_1$, $A_2$, $A_3$ in the Euclidean
plane has exactly one balanced vertex $O$ at the Fermat point of
the three unbalanced vertices and three edges $OA_i$.
Moreover, this assertion is true for geodesic
nets with $3$ unbalanced vertices on any non-positively 
curved Riemannian $\mathbb{R}^2$.
\end{theorem}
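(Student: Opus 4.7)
The plan is to first handle the forward direction, then prove by contradiction that the number of balanced vertices cannot exceed one. If the net has exactly one balanced vertex $O$, then every edge incident to $O$ terminates at an unbalanced vertex, so $\deg(O)\leq 3$; combined with the standing assumption $\deg(O)\geq 3$, we get $\deg(O)=3$ and $O$ adjacent to each of $A_1,A_2,A_3$. The balancing condition at $O$ becomes
$$\sum_{i=1}^{3}\frac{A_i-O}{|A_i-O|}=0,$$
which classically forces $O$ to be the Fermat point, with all three angles at $O$ equal to $120\degrees$. So the content of the theorem reduces to: \emph{no geodesic net with three unbalanced vertices can have two or more balanced vertices.}

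The key technical tool I would use is the following \emph{half-plane lemma}: at every balanced vertex $v$ of a geodesic net in a Riemannian surface, every open half-plane of $T_vM$ bounded by a line through the origin contains the outgoing tangent direction of at least one incident edge; equivalently, $v$ lies in the interior of the convex hull of its outgoing unit tangent vectors. I would prove this by contradiction: if all outgoing unit tangents at $v$ lay in a closed half-plane, their zero sum would be forced onto the bounding line, which in turn forces every unit tangent onto that line. But then only two directions are available while $\deg(v)\geq 3$, so two distinct edges would share an initial tangent; since edges are straight segments in $\mathbb{R}^2$ (or unique short geodesics in a Cartan--Hadamard surface), these two edges would coincide, contradicting their distinctness.

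Now suppose for contradiction that the net has at least two balanced vertices, and let $C$ denote the (geodesic) convex hull of the set of balanced vertices. Choose a balanced vertex $v$ extremal in $C$: there exists $\vec{u}\in T_vM$ with $\langle w-v,\vec{u}\rangle<0$ for every other balanced vertex $w$. By the half-plane lemma, the open half-plane $H=\{x:\langle x-v,\vec{u}\rangle>0\}$ must contain the outgoing tangent of some edge at $v$; its other endpoint then lies in $H$, and since $H$ contains no balanced vertex this endpoint must be one of the $A_i$. Letting $\vec{u}$ range over the full outward normal cone at the extreme point $v$ (which has positive angular measure, since $v$ is a corner of $C$), the unbalanced neighbors of $v$ are forced to ``cover'' this outward arc of directions, while the balanced neighbors cover the complementary inward arc in the sense of the half-plane lemma.

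The main obstacle is turning this local picture at one extremal balanced vertex into a global contradiction. My plan is to apply the extremal analysis simultaneously at every extreme point of $C$ (there are at least two, or exactly two in the degenerate case when all balanced vertices are collinear) and track which of $A_1,A_2,A_3$ can serve as the outward neighbor of each. The global constraints are (i) the entire net lies inside $\triangle A_1A_2A_3$, (ii) distinct edges are straight segments that do not cross, and (iii) the balancing condition must hold exactly at every balanced vertex, not just the extremal ones. In the cleanest subcase, where two extremal balanced vertices are both forced to be adjacent to all three of $A_1,A_2,A_3$, the uniqueness argument of the first paragraph identifies both with the Fermat point, an immediate contradiction. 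I expect the bulk of the work to lie in ruling out the remaining configurations by a careful case analysis based on (i)--(iii); the extension to nonpositively curved surfaces should follow essentially unchanged, since the half-plane lemma is a purely tangent-space statement and CAT$(0)$ convexity of distance functions in Cartan--Hadamard surfaces makes the convex hull $C$ and the extremal argument go through verbatim.
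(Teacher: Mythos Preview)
The paper does not prove this theorem; it is quoted from \cite{Pthree}, so there is no in-paper argument to compare against. I can only evaluate your plan on its own merits.

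The half-plane lemma and the extremal-vertex observation are correct and are standard first moves, but what they actually yield is weak: at each extreme point $v$ of the convex hull $C$ of balanced vertices, \emph{at least one} edge leaves $v$ toward an unbalanced vertex. It does not force $v$ to be adjacent to two, let alone all three, of the $A_i$; your ``cleanest subcase'' is therefore not something the setup produces, and you give no mechanism to reach it. The entire content of the theorem is hidden in the sentence ``I expect the bulk of the work to lie in ruling out the remaining configurations by a careful case analysis,'' and you have not identified what the cases are, what invariant distinguishes them, or why any of them is impossible.

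There is a concrete warning sign that the tools you have assembled are insufficient: nothing in your outline uses the number $3$ in an essential way until the very last step. The half-plane lemma, the convex hull $C$, the extremal argument, and constraints (i)--(iii) are all available verbatim when there are four unbalanced vertices, yet the paper exhibits geodesic nets with four unbalanced vertices and $28$ (and, irreducibly, $16$) balanced vertices, and conjectures that the count is unbounded (Conjecture~\ref{conj:fourandarbitrary}). Any viable proof must therefore exploit a feature specific to three boundary points that breaks when a fourth is added; your proposal does not isolate such a feature, so as it stands it is a plan without the key idea rather than a proof with a routine gap.
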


Note that \cite{Pthree} contains an example demonstrating that this
assertion is no longer true without the sign restriction on the curvature of the Riemannian plane. Yet it is not known if 
the assertion is still true if the integral of the positive
part of the curvature is sufficiently small. (The example for positive curvature constructed in \cite{Pthree} requires total curvature at least $\pi$.)

The striking contrast between Example \ref{ex:weightedthree} of geodesic multinets
with $3$ unbalanced vertices and the extreme rigidity of geodesic nets with three unbalanced
vertices on the Euclidean plane leads to some intriguing
open questions such as:

\begin{problem}
Let $\Sigma$ be the set of all triples $S$ of points
of the Euclidean plane such that there exist geodesic multinets
with $3$ unbalanced vertices at $S$ and arbitrarily many balanced
vertices. Is it true that $\Sigma$ is a set of measure zero (in $(\mathbb{R}^2)^3$)?
\end{problem}

\begin{problem} Is there a function $f(n)$ such that
for each geodesic multinet with three boundary vertices
in the Euclidean plane such the that multiplicities of all edges do not exceed $n$ the number of balanced vertices does not exceed $f(n)$?
\end{problem}

\begin{problem}
Classify all geodesic multinets in the
Euclidean plane with $3$ unbalanced vertices.
\end{problem}

\subsection{Geodesic nets in the Euclidean plane and $3$-space
with $4$ unbalanced vertices.}

We are going to start from the following remark. Given several
points $A_1,\ldots ,A_k$ in the Euclidean space $\mathbb{R}^n$,
there is always the unique point $O\in\mathbb{R}^n$ (called Fermat point) such that
the sum of distances $\sum_{i=1}^k \dist(x,A_i)$ attains its
global minimum at $O$. (This fact is an immediate corollary
of the convexity of the function $\sum_i \dist(A_i, x).$)
For three points forming a triangle with the angles $<120\degrees$, $O$ is the point such that all angles $A_iOA_j$ are equal to $120\degrees$. For four points
in the plane at the vertices of a convex quadrilateral,
$O$ is the point of intersection of the two diagonals. For four
vertices of a regular tetrahedron, $O$ is its center.
If $O$ is not one of the points $A_i$, then the star-shaped tree 
formed by all edges $OA_i$ is a geodesic net with unbalanced
vertices $A_1,\ldots , A_k$ and the only balanced vertex at $O$.
If $A_1, A_2, A_3, A_4$ are, say, vertices of a square or
a rectangle close to a square one has two other well-known
and ``obvious" geodesic nets with unbalanced vertices at $A_i$;
Both these nets are \includegraphics{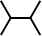} shaped (see Figure \ref{fig:threeandfournet}): 
They have two new balanced
vertices $O_1$ and $O_2$ connected by an edge. Each balanced
vertex is connected by edges with a pair of unbalanced vertices,
so that all three angles at either $O_1$ or $O_2$ are $120$
degrees, and each of the four unbalanced vertices is connected
with exactly one balanced vertex. There are three ways
to partition a set of four vertices into two unordered pairs,
yet only those where the unbalanced vertices in each  pair
are connected by a side of the convex quadrilateral can ``work". Of course, the locations of balanced points $O_1, O_2$ will be different for the
two ways to partition the set of four sides of the quadrilateral into pairs.
(Note that exactly the same idea works for the regular tetrahedron: Each of three pairs of opposite edges gives rise
to a \includegraphics{tinytree} shaped geodesic net with two balanced vertices.)

\begin{figure}[b]
	\centering
	\includegraphics{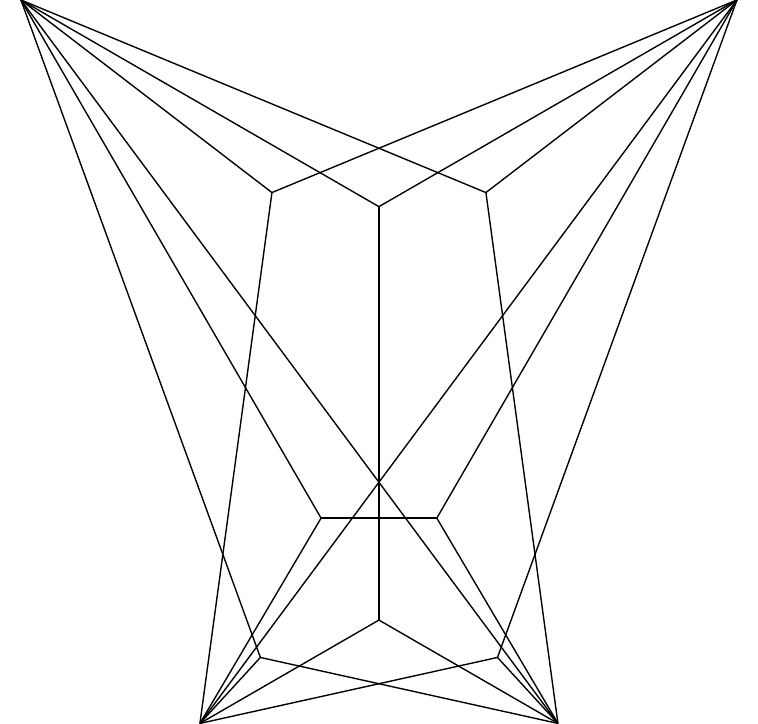}
	\caption{An example of a geodesic net in the plane with four unbalanced vertices which is an \enquote{overlay} of trees.}
	\label{fig:fournet}
\end{figure}

It had been observed in \cite{Pthree} that given vertices $A_1,\ldots ,A_4$ of
a convex quadrilateral close to a square but in general position,
one can combine the star-shaped net with one balanced point
at the point of intersection of diagonals, the two \includegraphics{tinytree} shaped
nets and four star-shaped geodesic nets with unbalanced points
at the vertices of each of $4$ triangles formed by all triples
of four vertices $A_i$ one obtains a geodesic net with $28$
balanced vertices (see Figure \ref{fig:fournet}). (One obtains some extra balanced vertices
as points of intersection of edges of geodesic nets that are
being combined.) This example might seem like a strong indication
that no analog of Theorem \ref{thm:parschthree} for geodesic nets with $4$ unbalanced vertices is possible. Yet one can define {\it irreducible}
geodesic nets on a given set $S$ of unbalanced vertices
as geodesic nets such that no subgraph formed by a proper subset
of the set of edges (with all incident vertices) is a geodesic
net with the same set $S$ of unbalanced vertices. It is clear
that classification of geodesic nets boils down to the classification of irreducible nets. As so far we have only
two ``obvious" isomorphism types of geodesic nets with $4$ unbalanced vertices (namely, X-shaped and \includegraphics{tinytree} shaped trees),
one might still suspect that there exists an easy classification
of geodesic nets with $4$-vertices on the Euclidean plane.

Yet the situation changed (at least for us) after one of the authors discovered a new example of an irreducible geodesic net
with $4$ unbalanced vertices at four vertices of the square
and $16$ balanced vertices (see Figure \ref{fig:fourirreducible}) A detailed
description of this example can be found in \cite{Pirreducible}. Now a natural next step in classification of geodesic nets
on $4$ vertices in the plane will be the following problem:

\begin{problem}\label{problem:fourmorethan16}
Find an irreducible geodesic net with
$4$ unbalanced vertices in the Euclidean plane with more than
$16$ balanced vertices (or prove that such a geodesic net does not
exist.)
\end{problem}

In fact, we believe that:

\begin{conjecture}\label{conj:fourandarbitrary}
There exist geodesic nets in the Euclidean plane with $4$ unbalanced vertices and an arbitrarily
large number of balanced vertices.
(Moreover, we will not be surprised if this assertion is already true in the case
when the set of unbalanced vertices coincides with the set of vertices of a square).
\end{conjecture}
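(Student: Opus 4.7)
The natural starting point is to adapt the three-vertex multinet construction of Example \ref{ex:weightedthree} to the four-vertex case. Fix a \emph{tangential} quadrilateral $A_1A_2A_3A_4$ (i.e.\ one whose four interior angle bisectors meet at a common point $O$), and assume that $\cos(\alpha_i/2)$ is rational for each interior angle $\alpha_i$; as in Example \ref{ex:weightedthree}, infinitely many such angle profiles arise from Pythagorean triples. For $j=1,\ldots,k$ and ratios $r_1<\cdots<r_k<1$, let $A_i^j$ denote the image of $A_i$ under the homothety of centre $O$ and ratio $r_j$. Each $A_i^j$ lies on the bisector from $A_i$, so the balance condition at $A_i^j$ becomes the scalar recursion $m_j^i-m_{j-1}^i=2s_j\cos(\alpha_i/2)$, where $m_j^i$ is the multiplicity of the radial edge $A_i^jA_i^{j+1}$ and $s_j$ the common multiplicity of the four sides of the $j$-th inner quadrilateral (the transverse component of the balance equation forces the two sides meeting at each $A_i^j$ to share a multiplicity, and chaining these equalities around the cycle forces a single $s_j$ per layer). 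Rationality of $\cos(\alpha_i/2)$ then lets one solve the recursion in positive integers, yielding a geodesic multinet with four unbalanced vertices and $4k$ balanced vertices for arbitrarily large $k$.

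The second step, which I expect to be the serious one, is to desingularize this multinet into a genuine geodesic net. Each multi-edge of weight $m$ would be replaced by $m$ nearly parallel distinct straight segments, and each balanced vertex $A_i^j$ by a small cluster of degree-$3$ and degree-$4$ balanced vertices whose internal combinatorial geometry absorbs the $O(m s_j)$ incoming parallel edges. Preserving the quadrilateral's symmetries throughout would reduce the number of independent cluster configurations to verify.

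The main obstacle is precisely the rigidity encoded in Theorem \ref{thm:parschthree}: for three unbalanced vertices the Steiner law propagates globally and forbids any nontrivial desingularization. The hope for four vertices lies in the degree-$4$ (X-shaped) balanced vertices, which impose no multiplicity condition and carry two continuous moduli per vertex; both the $16$-vertex irreducible example of \cite{Pirreducible} and the construction of section \ref{sect:thestar} exploit this flexibility. Making this precise — showing that the continuous moduli of a sufficiently rich family of X- and Y-shaped subnets can be simultaneously tuned to satisfy the boundary conditions imposed by the nested structure without producing extraneous edge crossings — is the genuine difficulty. If this direct approach fails, an alternative is to start from the $14$-vertex family of section \ref{sect:thestar} and deform its parameters so that ten of its unbalanced vertices end up satisfying the balancing condition; such vertices may then be relabelled as balanced, producing a net with only four unbalanced vertices and many balanced ones.
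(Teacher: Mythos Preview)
The statement you are addressing is a \emph{conjecture}: the paper does not prove it, offers no argument for it, and presents it only as a belief supported by the irreducible $16$-balanced-vertex example of \cite{Pirreducible}. There is therefore no ``paper's own proof'' to compare against. Your submission is, as you yourself make clear, a strategy with an explicitly identified gap rather than a proof.

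Your Step~1 is correct: the adaptation of Example~\ref{ex:weightedthree} to a tangential quadrilateral with rational $\cos(\alpha_i/2)$ does produce geodesic \emph{multinets} with four unbalanced vertices and $4k$ balanced ones (the transverse balance forces a single side-multiplicity $s_j$ per layer, and the radial recursion $m_j^i-m_{j-1}^i=2s_j\cos(\alpha_i/2)$ is solvable in positive integers exactly as in the triangular case). Step~2, however, is the entire content of the conjecture, and it is not carried out. The paper in fact discusses precisely this desingularisation heuristic in the paragraph immediately preceding Theorem~\ref{thm:parschthree} --- replacing balanced vertices by ``clouds'' and multiple edges by nearby simple ones --- and observes that ``all such hopes are shattered'' in the three-vertex case by that theorem. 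You are right that degree-$4$ vertices give extra moduli absent in the purely degree-$3$ setting, but converting this into an actual construction is exactly what remains open. Your fallback suggestion (deform the Section~\ref{sect:thestar} family so that ten of its fourteen unbalanced vertices become balanced) also does not work as stated: seven of those vertices lie on the outer circle, receive a number of suspension edges growing with $n$, and carry unbounded imbalance (see the remarks in Section~\ref{sect:canonebound}); no deformation of the parameters can drive their imbalance to zero, so at most seven --- not ten --- vertices are even candidates for relabelling.
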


\begin{figure}
    \centering
    \includegraphics{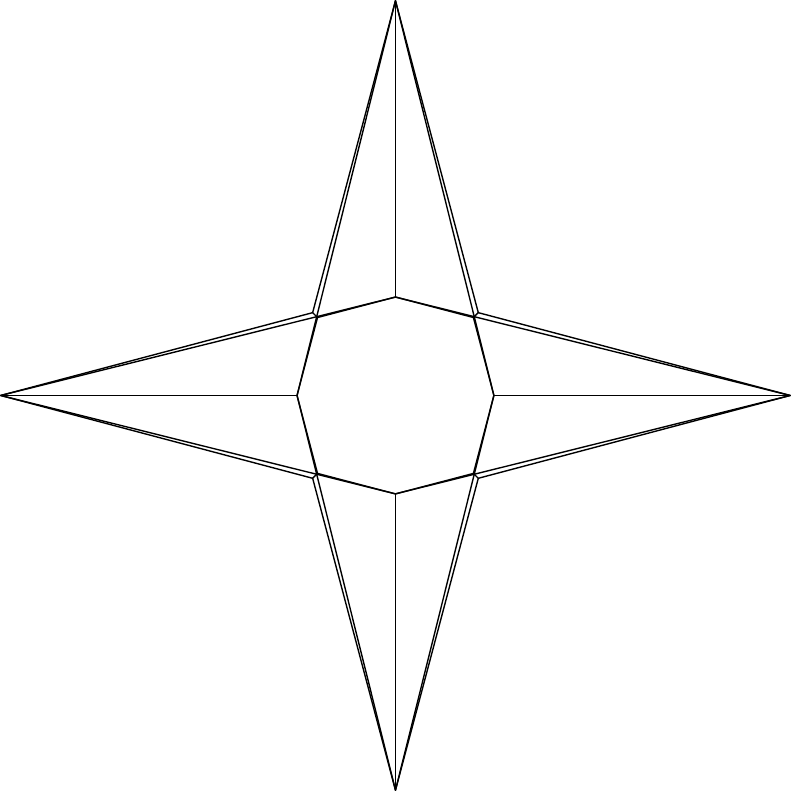}
    \caption{The example of an irreducible geodesic net with four unbalanced vertices that is not a tree, as constructed in \cite{Pirreducible}}
    \label{fig:fourirreducible}
\end{figure}

Note that we are not aware of any analogs of this geodesic net with $4$ unbalanced and $16$ balanced vertices when $4$ unbalanced vertices are non co-planar points in the Euclidean
$3$-space, e.g. the vertices of the regular tetrahedron.
Yet in this case there exists (a more obvious) geodesic
net with $4$ unbalanced vertices $A_i$ and $7$ balanced
vertices obtained as follows (see figure \ref{fig:four3d}): Start from the star-shaped geodesic net with the balanced vertex at the center of the regular tetrahedron. For each of $6$ triangles $A_iA_jO$, where
$i,j$ run of the set of all unordered distinct pairs of numbers
$1,2,3,4$ attach the Y-shaped geodesic net with unbalanced
vertices at $A_i$, $A_j$ and $O$ and a new balanced vertex at
the center of the triangle $A_iA_jO$. Nevertheless, it seems that
it is harder to construct irreducible nets with unbalanced
vertices at the vertices of a regular tetrahedron than at the vertices of a square. We would not be surprised if the answer
for the following question turns out to be positive:

\begin{problem}Is there a number $N$ such that each
irreducible geodesic net with unbalanced vertices at all vertices
of a regular tetrahedron has at most $N$ balanced vertices?
\end{problem}

\begin{figure}
    \centering
    \includegraphics{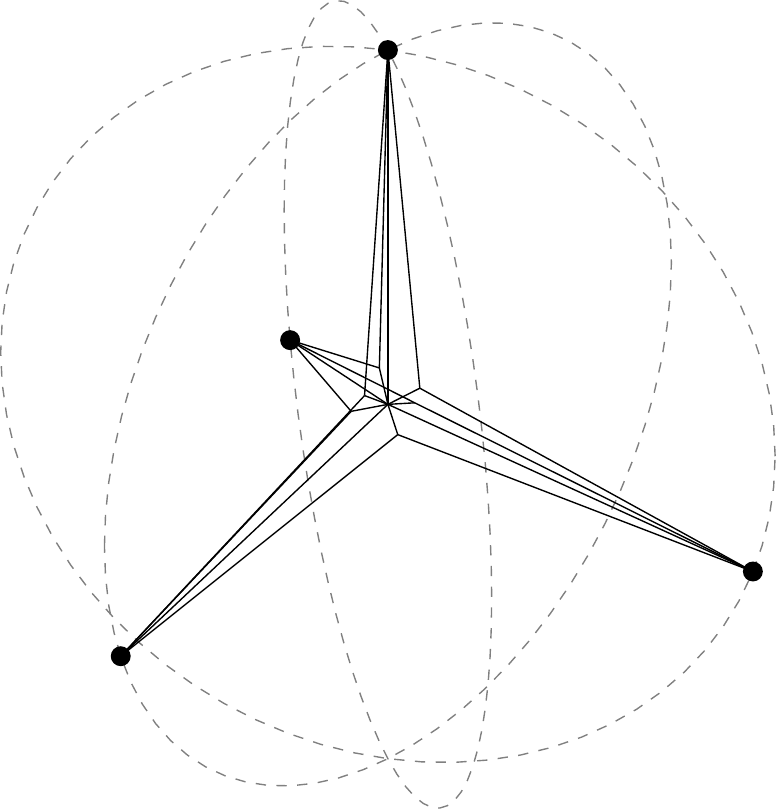}
    \caption{A geodesic net in Euclidean three-space, with four unbalanced and seven balanced vertices}
    \label{fig:four3d}
\end{figure}

\subsection{Geodesic nets in the plane: can one bound the number
of balanced vertices in terms of the number of unbalanced vertices?}\label{sect:canonebound}

We cannot solve Problem \ref{problem:fourmorethan16}. Yet in the next section, we are
going to describe a construction of a certain family of irreducible geodesic multinets $G_i(\varphi)$ with $14$ unbalanced vertices ($7$ of which a constant
and $7$ variable) and arbitrarily many balanced vertices. We
{\it believe} that these geodesic multinets are, in fact,
geodesic nets. Our faith is based on the following facts:

\begin{enumerate}
\item We checked numerically that the first $100$ geodesic multinets
from our list are, indeed, geodesic nets. (The number of balanced
vertices of $G_i(\varphi)$ is greater or equal than
$7i$.)
\item We constructed an sequence of functions $\varphi_i$ of one real variable $\varphi$. If for each $N$, some $N$ functions $\varphi_i(\varphi)$ are pairwise
distinct in a neighbourhood of $0$, then our construction,
indeed, produces geodesic nets with at least $7N$ balanced
vertices. The functions $\varphi_i(\varphi)$ are presented by a very complicated set of recurrent relations. Whenever there seem to be no reason
for any pair of these functions to coincide, the formulae are so
complicated that the proof of this fact eludes us.
\end{enumerate}

Note, that while the imbalances at the seven constant vertices
are unbounded, the imbalance at $7$ variable vertices remain
bounded. This leads us to a belief that some modification
of our construction might lead to elimination of several
variable unbalanced points leaving us only with seven constant
unbalanced points. Moreover, we believe that it is possible that our construction
will ``survive" small perturbations of the seven constant
unbalanced points. As a result we find that the following
conjecture
is very plausible:

\begin{conjecture} 1. There exist $N_0$ and an $N_0$-tuple $S$ such that for each $N$ there
exists a geodesic net with $S$ being its
set of unbalanced vertices
and the number of balanced vertices  greater than $N$.
2. Furthermore, there exist not merely
one such $N_0$-tuple $S$ but a subset of $(\mathbb{R}^2)^{N_0}$ of
positive measure (or even a non-empty
open subset) of such $N_0$-tuples.
\end{conjecture}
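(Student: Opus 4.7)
The plan is to upgrade the forthcoming construction of the geodesic multinets $G_i(\varphi)$ from Section \ref{sect:thestar} in three stages. First, for generic $\varphi$ each $G_i(\varphi)$ should be shown to be a genuine geodesic net rather than a multinet. The auxiliary functions $\varphi_j(\varphi)$ are real-analytic in a neighbourhood of $0$, since they arise from iterated reflection/rotation formulae involving only trigonometric operations and rational combinations of earlier $\varphi_k$'s; any two such functions either coincide identically or agree only on a nowhere dense set. Identity for a given pair can be ruled out either by comparing leading Taylor coefficients symbolically, or by a single high-precision numerical check at one value of $\varphi$. Applying this to every pair, the bad set of $\varphi$ is a countable union of nowhere dense subsets, and a full-measure choice of $\varphi$ makes every $G_i(\varphi)$ an honest geodesic net.

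Next, to reduce from $14$ to $N_0 = 7$ unbalanced vertices, one would exploit the fact that the imbalances at the seven variable vertices remain uniformly bounded as $i \to \infty$. At each variable vertex $v_j$ one would glue in a fixed auxiliary subnet --- for example, a tripod whose tangent contribution at $v_j$ equals $-\Imb(v_j)$ --- so that $v_j$ becomes balanced. The free ends of these caps would be routed to the seven constant unbalanced vertices; the extra imbalance they deposit there is permitted, because those imbalances are already growing without bound. Since the required corrections are $i$-uniformly bounded and the caps have a fixed combinatorial type, they can be placed in generic position, preserving planarity and the non-intersection of edges. This establishes part 1 of the conjecture with $N_0 = 7$.

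For part 2, I would invoke the implicit function theorem. For a fixed combinatorial type $G_i$, the balancing conditions form a real-analytic system in the positions of the balanced vertices, parametrised by the positions of the $N_0$ unbalanced vertices. At the reference configuration constructed in step two, the Jacobian is non-degenerate as long as no two incident edges are collinear or collapse, which is a generic condition excluded by step one. The implicit function theorem then produces an open neighbourhood $U_i$ of $S$ in $(\mathbb{R}^2)^{N_0}$ on which the construction deforms smoothly as a geodesic net.

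The main obstacle is uniformity in $i$: the neighbourhoods $U_i$ could a priori shrink to $\{S\}$, giving nothing in the limit. To overcome this I would seek a uniform lower bound on the smallest singular value of the $i$-th Jacobian, exploiting the self-similar structure of $G_i(\varphi)$: successive shells are built from rescaled copies of a fixed pattern, so the linearised system should be block-triangular with uniformly invertible diagonal blocks, yielding a single neighbourhood of $S$ that works for all $i$. Failing such uniform control, one would still obtain the positive-measure form of part 2 by a diagonal / Cantor-type argument, intersecting nested positive-measure refinements of the $U_i$; only the open-set version would remain genuinely open.
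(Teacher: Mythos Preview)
This statement is a \emph{conjecture} in the paper, not a theorem; the paper offers no proof, only the heuristic motivation immediately preceding it (the construction of $G_i(\varphi)$ in Section~\ref{sect:thestar}, the boundedness of imbalances at the seven variable vertices, and the belief that the construction survives small perturbations). So there is no proof to compare your proposal against, and the relevant question is whether your outline would actually close the gap.

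It does not. Each of your three stages contains a genuine gap that the paper itself was unable to fill. In stage one, you propose to rule out identical coincidence of two $\varphi_j$'s ``by a single high-precision numerical check at one value of $\varphi$''. A numerical evaluation, however precise, cannot rigorously distinguish two real-analytic functions; this is exactly Conjecture~\ref{conj:varphiconjecture}, which the authors support with MATLAB computations but explicitly leave open. Your alternative of ``comparing leading Taylor coefficients symbolically'' is in principle sound, but the recursion for $\varphi_i'(0)$ derived in the appendix is sufficiently intricate that the authors could not carry this out, and you give no indication of how you would.

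In stage two, the sentence ``the free ends of these caps would be routed to the seven constant unbalanced vertices'' hides the entire difficulty. The nets $G_i(\varphi)$ become arbitrarily dense near the centre as $i\to\infty$; threading new straight edges from the variable vertices (which accumulate at the origin) out to the outer ring, in generic position and without crossing any existing edge non-transversally, is a serious planarity problem that you have not addressed. The paper's remark that such a modification ``might'' be possible is precisely an acknowledgement that no one knows how to do it.

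In stage three you correctly identify the uniformity obstruction and then propose two remedies, neither of which is substantiated: a block-triangular structure with uniformly invertible blocks is asserted, not established, and the ``diagonal / Cantor-type argument'' would require the $U_i$ to be nested or at least to have positive-measure intersection, which does not follow from each $U_i$ individually having positive measure. As written, the proposal is a reasonable research programme but not a proof.
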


In fact, it is quite possible that $N_0=4$.

\subsection{Gromov's conjecture.} As we saw, even for the
simplest geodesic multinets in the Euclidean plane, there is no upper bound for
the number of balanced vertices in terms of the number
of unbalanced vertices. The example mentioned in section
\ref{sect:canonebound} and explained in detail in the next section strongly
suggests, that such a bound does not exist already for geodesic nets. The length of a geodesic net cannot be
of great help either, as we can rescale any geodesic net
to an arbitrarily small (or large) length without changing
its shape. One appealing conjecture due to M. Gromov is
the following:

\begin{conjecture}[M. Gromov]\label{conj:gromov}
The number of balanced
vertices of a geodesic net in the Euclidean plane can be
bounded above in terms of the number of unbalanced vertices
and the total imbalance.
\end{conjecture}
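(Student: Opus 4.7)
The plan is to combine the length--imbalance formula (1.2.2) with the monotonicity formula for stationary $1$-varifolds (see \cite{allardalmgren}), together with the planarity of the underlying graph. First, normalize: since $n$, $I$, and the number $b$ of balanced vertices are all invariant under translation and scaling, rescale so that the diameter of $S$ equals $1$ and translate so that the origin is at the centroid of $S$. Observation (1) of \S1.2 then places the entire net inside the unit disk, and (1.2.2) gives
$$L(N) \;=\; -\sum_{v \in S} \langle v, \Imb(v)\rangle \;\le\; \sum_{v \in S} \lvert v\rvert\,\imb(v) \;\le\; I,$$
so a geodesic net with bounded imbalance on a fixed $S$ automatically has bounded length.

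For each balanced vertex $v$ of degree $d_v \ge 3$ let $\sigma_v := \dist(v, V \setminus \{v\})$. Since balanced vertices are themselves stationary points of the associated $1$-varifold, the monotonicity formula applied inside $B_{\sigma_v/2}(v)$ (where $v$ is the only singular point) gives
$$L\bigl(N \cap B_{\sigma_v/2}(v)\bigr) \;\ge\; \frac{d_v}{2}\cdot\frac{\sigma_v}{2} \;\ge\; \frac{3}{4}\sigma_v.$$
The balls $B_{\sigma_v/2}(v)$ are pairwise disjoint, so summing and combining with the length bound yields $\sum_v \sigma_v \le \tfrac{4}{3} I$. In particular, for any fixed $\delta > 0$ the number of balanced vertices with $\sigma_v \ge \delta$ is at most $\tfrac{4I}{3\delta}$. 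This already handles the count of ``well-separated'' balanced vertices and reduces the problem to controlling clusters of very close balanced vertices.

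The main difficulty, and the step I do not see how to execute, is exactly this cluster case: if $b$ is huge then the average $\sigma_v$ must be of order $I/b$, which the monotonicity estimate alone cannot forbid. To attack it I would try a scale-by-scale approach: look at the intersection of $N$ with a small disk $B_\epsilon(u)$ about each unbalanced vertex $u$ (or about each putative cluster centre), and regard this intersection as itself a geodesic net whose ``boundary'' consists of $u$ together with the finitely many edge-crossings on $\partial B_\epsilon(u)$, whose effective imbalance is controlled by the near-field version of formula (1.2.3). The obstacle is that this local effective imbalance is not obviously smaller than $I$, so a naive induction on scales does not close. A successful argument will presumably have to use the planarity of the net and the degree-$\ge 3$ condition on balanced vertices in an essential way---plausibly via a Gauss--Bonnet / turning-angle bookkeeping along the faces of the planar embedding that shows deeply nested clusters are incompatible with balancing---and this combinatorial-geometric ingredient, rather than the monotonicity estimate itself, is where the real difficulty of Gromov's conjecture seems to lie.
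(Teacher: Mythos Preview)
The statement you are trying to prove is presented in the paper as an \emph{open conjecture} attributed to Gromov, not as a theorem; the paper contains no proof and explicitly treats it as unsolved. So there is no ``paper's own proof'' to compare against, and your proposal cannot be a correct proof of something that is genuinely open.

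That said, your partial argument is sound as far as it goes, and your own diagnosis of where it breaks is accurate. The normalization step and the length bound $L(N)\le I$ via formula (1.2.2) are correct and are exactly what the paper observes in the paragraph following the conjecture (the paper phrases it as $L\le I\cdot D$, then rescales to $D=1$). Your monotonicity-based estimate $\sum_v \sigma_v \le \tfrac{4}{3}I$ is also valid and is a standard consequence of the Allard--Almgren monotonicity formula; it does control the number of $\delta$-separated balanced vertices. But as you yourself say, this gives nothing against tight clusters, and the cluster case is precisely the content of the conjecture. Your proposed local-rescaling induction fails for the reason you identify: the boundary imbalance of $N\cap B_\epsilon$ need not decrease, so there is no induction hypothesis to invoke.

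The paper does not resolve this gap either. What it does instead is (i) show by the two-rotated-$N$-gon example that the total imbalance alone (without the count $n$ of unbalanced vertices) cannot bound $b$; (ii) observe that the general conjecture is equivalent to the restricted version where every unbalanced vertex has imbalance exactly $1$, by splitting each unbalanced vertex into fewer than $\imb(v)+3$ new degree-one unbalanced vertices; and (iii) note that the conjecture would follow from a stronger conjecture bounding $b$ in terms of $n$ and $L/D$. None of these is a proof, and the problem remains open.
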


\begin{figure}
	\centering
	\includegraphics{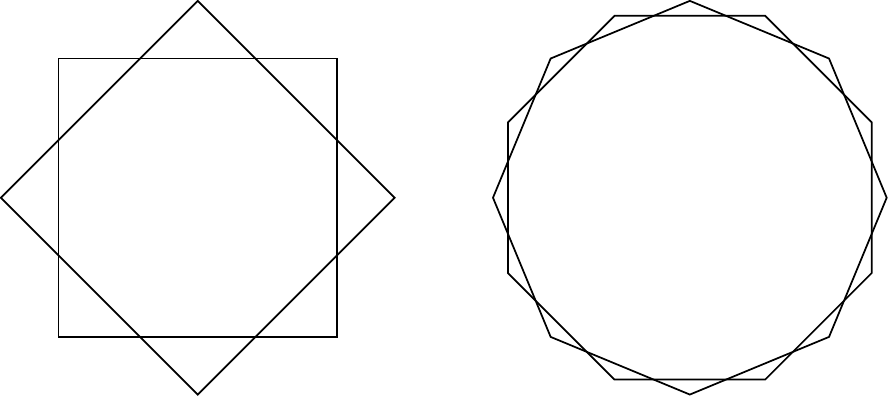}
	\caption{This construction shows that there is a sequence of nets with bounded imbalance, but arbitrarily many balanced vertices}
	\label{fig:onlyimbalancenotenough}
\end{figure}

In fact, we do not see any reasons why this conjecture
cannot be extended to geodesic multinets. Note that
the following simple example demonstrates that one cannot
majorize the number of balanced points only in terms of
the total imbalance without using the number of unbalanced
vertices (see figure \ref{fig:onlyimbalancenotenough}): Take a copy of a regular $N$-gon, and obtain
a second copy by rotating it by $\frac{\pi}{2N}$ about
its center. Take a geodesic net obtained as the union
of these two copies of the regular $N$-gon. The set of
unbalanced vertices will consist of $2N$ vertices of
both copies. Yet as sides of two copies intersect, we are going to obtain also $2N$ balanced vertices that arise
as points of intersections of various pairs of sides.
The imbalance at each unbalanced vertex is $2\sin\frac{\pi}{N}$, so the total imbalance is
$4N\sin\frac{\pi}{N}<4N\frac{\pi}{N}=4\pi$. We see
that when $N\longrightarrow\infty$, the number of balanced
points also tends to $\infty$, yet the total imbalance 
remains uniformly bounded.

The above conjecture by Gromov was published in the paper by 
Y. Mermarian \cite{memarian} for geodesic nets such that all imbalances
are equal to one (in our terms. As in this case the total
imbalance is equal to the number of unbalanced vertices,
the conjecture is that the number of unbalanced vertices does not exceed the value of some function of the number of balanced vertices. Note also that \cite{memarian} contains the proof of this restricted
version of the conjecture in cases, when the degrees of all balanced vertices are either all equal to $3$, or all are equal to $4$.) Yet, the following simple observation implies that the restricted form (imbalances equal to 1 at each unbalanced vertex) is, in fact, equivalent to full Conjecture \ref{conj:gromov}.
The observation is that if $v$ is an unbalanced vertex,
then it can be extended by adding less than $\imb(v)+3$
new edges starting at $v$ so that $v$ becomes balanced.
Applying this trick to all imbalanced vertices we replace
our original geodesic net by a new one, with the new number
of unbalanced vertices not exceeding the sum of the total imbalance and thrice the number of unbalanced vertices in the original net.
In this new geodesic net the
imbalances of all unbalanced vertices are equal to one.
Thus, the restricted version of the conjecture implies
the general version. We are going to explain this
observation  in the case, when $\imb(v)\in (0,1)$ leaving
the general case to the reader. In this case we need
to find three new edges starting at $v$ such that their
angles with the imbalance vector $\Imb(v)$ that we denote
$\alpha_1$, $\alpha_2$ and $\alpha_3$ satisfy the balancing condition that can be written in the scalar form as the system of two equations: $\sum_{i=1}^3\cos \alpha_i=2\imb(v)$ and $\sum_{i=1}^3\sin\alpha_i=0.$
It is clear that this system has an uncountable set of solutions. This fact enables us to ensure that none of the
new edges coincide with already existing edges incident to $v$.

Note that formula (1.2.2) implies that the length
of a geodesic net does not exceed the product of its
total imbalance and the diameter ( which for geodesic
nets in the Euclidean space is always
equal to the maximal distance between two unbalanced
points). Further, we can always rescale a geodesic net in the plane so that its diameter becomes equal to $1$. In this case its length becomes equal to $\frac{L}{D}$, where 
$L$ and $D$ are the values of the length and the diameter before the rescaling.
Therefore, Conjecture \ref{conj:gromov} would follow
from the validity of the following conjecture:

\begin{conjecture}
There exists a function $f$ of
such that each geodesic multinet in 
the Euclidean plane with $n$ unbalanced vertices, diameter $D$, and total length $L$ has less than $f(\frac{L}{D},n)$ balanced vertices.
\end{conjecture}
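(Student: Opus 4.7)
The plan is to attempt this conjecture through a scaling reduction followed by a compactness argument for geodesic multinets viewed as stationary $1$-varifolds. Since $L/D$ is scale-invariant, I would first rescale so that $D=1$; then every vertex lies in a unit-diameter set and $L \leq C := L/D$. The task becomes to bound the number $k$ of balanced vertices purely in terms of $C$ and $n$.

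I would argue by contradiction. Suppose there is a sequence $\{N_i\}$ of geodesic multinets with $n$ unbalanced vertices, unit diameter, and total length at most $C$, but with $k_i \to \infty$ balanced vertices. The $n$-tuple of unbalanced vertices lies in a compact subset of $(\mathbb{R}^2)^n$, so after passing to a subsequence it converges. Viewing each $N_i$ as an integral $1$-varifold of uniformly bounded mass, one extracts a further subsequence converging in the varifold sense to a stationary $1$-varifold $N_\infty$ of length at most $C$ with the same $n$ unbalanced points (cf.~\cite{allardalmgren}). Since every balanced vertex of $N_i$ has degree $\geq 3$, the local version of formula~(1.2.2), applied in small balls, gives a uniform lower bound on the density of $N_i$ at each balanced vertex; hence each balanced vertex contributes at least $c\cdot \rho$ to the length of $N_i$, where $\rho$ is any radius up to which the ball around that vertex contains no other balanced vertex or unbalanced vertex. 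If the balanced vertices of $N_i$ remained uniformly separated, summing these contributions would force $L(N_i) \to \infty$, contradicting $L(N_i)\leq C$.

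Consequently, balanced vertices must cluster, and some point $p \in \mathbb{R}^2$ attracts an unbounded number of them. I would then rescale $N_i$ about $p$ by the reciprocal of the local diameter of that cluster and extract a further limit. The balancing condition is preserved under rescaling, and the new boundary points come from old unbalanced vertices near $p$ together with ``cut" points where edges exit the rescaled region. The ultimate goal is to iterate this blow-up procedure and terminate either by producing a genuine geodesic net contradicting the rigidity of Theorem~\ref{thm:parschthree} (in the smallest cluster case), or by reducing to the uniform-degree setting controlled by Memarian \cite{memarian}, or by running an induction on a complexity invariant such as the number of unbalanced vertices in the rescaled model.

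The main obstacle is the control of edge multiplicities under blow-up. Example~\ref{ex:weightedthree} already shows that, even at a fixed scale, multiplicities of edges in a cluster can grow without bound while the triangle of unbalanced vertices remains fixed; under a rescaling that zooms in on a cluster point, there is no a priori reason for the multiplicity profile to stay bounded in terms of $(C,n)$, and the limiting stationary $1$-varifold can therefore fall outside the class to which our hypotheses apply. Getting past this obstacle seems to require a substantially finer quantitative monotonicity that tracks multiplicities against length, or a combinatorial argument that prohibits high-multiplicity clustering under the diameter/length constraint. A natural and more modest intermediate target would be to prove the conjecture under an a priori bound on the multiplicity of each edge, which would place the blow-up sequence inside a genuinely compact class and reduce the problem to a local rigidity statement near each cluster point.
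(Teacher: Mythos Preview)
This statement is presented in the paper as an open \emph{conjecture}, not as a theorem; the paper offers no proof, only the observation that its validity would imply Gromov's Conjecture~\ref{conj:gromov}. So there is no ``paper's own proof'' to compare against, and your proposal should be read as a strategy toward an open problem rather than a reconstruction of a known argument.

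As a strategy, the outline has a genuine gap that you yourself flag but perhaps underweight. The compactness step is fine: after normalizing to $D=1$, the multinets are stationary integral $1$-varifolds of bounded mass, and varifold limits exist. The trouble is everything after that. First, the blow-up around a cluster point does not produce an object with $n$ (or fewer) unbalanced vertices: the edges that exit the blow-up window become new boundary points, and their number is controlled only by the number of edges crossing a given circle, which in a multinet can be as large as the total multiplicity---and that is exactly the quantity you have no handle on. So the ``induction on the number of unbalanced vertices in the rescaled model'' does not get off the ground. Second, the endgame you propose---reducing to Theorem~\ref{thm:parschthree} or to Memarian's result---cannot close the argument: Theorem~\ref{thm:parschthree} is a statement about geodesic \emph{nets}, not multinets, and Example~\ref{ex:weightedthree} shows explicitly that multinets on three unbalanced points can have arbitrarily many balanced vertices; Memarian's result covers only the case where \emph{all} balanced vertices have the same degree $3$ or $4$, which your blow-up limit has no reason to satisfy.

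Your final paragraph is the honest summary: without an a priori bound on edge multiplicities (equivalently, on local density) in terms of $(L/D,n)$, the varifold-compactness route stalls. Proving such a bound is not a technicality---it is essentially the content of the conjecture, since Example~\ref{ex:weightedthree} shows that unbounded multiplicity is precisely the mechanism by which balanced vertices proliferate. The ``more modest intermediate target'' you suggest (assuming bounded multiplicity) is reasonable and is in the spirit of Problem~3.1.4 in the paper, but even that is stated there as open.
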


Now we would like to combine this conjecture with the
Becker-Kahn problem \ref{problem:beckerkahn} and extend it
to all Riemannian manifolds. Before doing so, consider
the example of a complete non-compact Riemannian manifold which is a disjoint union of (smooth) capped cylinders that have a fixed length but are getting thinner. More specifically, the cylinders have radii $\frac{1}{n}$ for all positive integers $n$ but fixed diameter of $1$. On any of these cylinders, we can now add $N$ closed geodesics around the waist of the cylinder, connecting all of them with a single closed geodesic that travels twice along the diameter of the manifold. Such a net will have $2N$ balanced vertices and fixed diameter $D=1$, but as long as $N/n$ is small enough, the length of the net $L$ gets arbitrarily close to $2$. So both $L/D$ and $L$ stay bounded whereas the number of balanced vertices can be chosen to be arbitrarily large. Note that we could make this manifold connected by connecting consecutive cylinders by thinner and thinner tubes of length $1$.

This example shows that for general Riemannian manifolds, we can't bound the number of unbalanced vertices in terms of $L/D$, $L$, and the number of balanced vertices. So we must either bound the injectivity radius of our Riemannian manifold $M$ from
below, or, more generally, adjust the length as follows:
The {\it adjusted total length} $\tilde L$ of a geodesic net is the sum of
integrals over all edges $e_i$ parametrized by their respective arclengths of $\frac{1}{\inj(e_i(s))}$, where $\inj(e_i(s))$ denotes the injectivity radius of the ambient
Riemannian manifold at $e_i(s)$. If $M$ is a Riemannian manifold
with a positive injectivity radius $\inj$, then
$\tilde L\leq \frac{L}{\inj}$. Now we can state our
most general conjecture.

\begin{conjecture}[Boundedness conjecture for geodesic nets on Riemannian manifolds] Let $M$ be a complete Riemannian manifold. There exists a function $f_M$ which depends
on $M$ but is invariant with respect to rescalings of $M$ with the following property: Let $G$ be a geodesic
net on $M$ with total length $L$, adjusted length $\tilde L$ and diameter $D$ that has $n$ unbalanced vertices.
Then its number of balanced vertices does not exceed
$f_M(\tilde L,\frac{L}{D},n)$. In particular, if
$M$ has injectivity radius $\inj>0$, then the number
of balanced vertices does not exceed
$f_M(\frac{L}{\inj}, \frac{L}{D},n).$
\end{conjecture}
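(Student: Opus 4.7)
The plan is a two-stage reduction: first, reduce the general Riemannian conjecture to its Euclidean specialization via a localization argument based on the injectivity radius; second, observe that the Euclidean specialization is essentially Gromov's Conjecture \ref{conj:gromov}, so the overall strategy rests on first establishing Gromov's conjecture.

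For the first stage, I would cover the geodesic net $G$ by geodesic balls $B_\alpha = B(p_\alpha, c\,\inj(p_\alpha))$ with $c$ a small absolute constant and centres $p_\alpha$ chosen by a Vitali-type selection along $G$. Inside each such ball the exponential map is a near-isometry, so the restriction $G \cap B_\alpha$ is an almost-Euclidean net whose unbalanced vertices are either (a) original unbalanced vertices of $G$ lying in $B_\alpha$, or (b) intersections of edges with $\partial B_\alpha$. Because $\tilde L = \sum_i \int_{e_i} \inj(e_i(s))^{-1}\, ds$ is essentially the count of $\inj$-sized arcs along $G$, both the number of balls and the total count of type-(b) vertices should be controlled by $\tilde L$. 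Applying the Euclidean form of the conjecture (equivalent to Conjecture \ref{conj:gromov}, as explained in the text) to each $G \cap B_\alpha$ and summing then produces the global bound $f_M(\tilde L, L/D, n)$, with $L/D$ entering because the per-ball length is bounded by the per-ball diameter times the per-ball total imbalance via formula $(1.2.2)$, while the per-ball diameter is bounded below by $c\,\inj(p_\alpha)$ so that the per-ball $L/D$ is controlled by the global $L/D$ up to constants.

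The main obstacle is of course that Gromov's Conjecture is itself open, even for nets in the Euclidean plane: the partial result of Memarian \cite{memarian} handles only nets whose balanced vertices are all of degree $3$ or all of degree $4$, and Theorem \ref{thm:parschthree} handles only $n=3$ in non-positive curvature. Any Euclidean proof must carefully exploit formula $(1.2.2)$ to convert the $L/D$ hypothesis into an effective upper bound on the total imbalance, and then somehow use bounded total imbalance (together with $n$) to bound the balanced-vertex count, perhaps via the monotonicity formula for stationary $1$-varifolds cited in \cite{allardalmgren}. A secondary difficulty is the curvature-induced distortion in the localization step: the exponential chart introduces angle defects that could create \emph{phantom} imbalances in the Euclidean model, and one needs a Rauch-type comparison to absorb these errors into $\tilde L$ rather than have them propagate as additional error terms. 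I expect that once the Euclidean conjecture is resolved, the localization scheme can be made rigorous along these lines, and that the rescaling invariance of $f_M$ will follow automatically because $\tilde L$, $L/D$, and $n$ are each scale-invariant; but the crux of the problem remains the purely Euclidean case.
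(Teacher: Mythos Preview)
The statement you are addressing is labelled a \emph{Conjecture} in the paper, and the paper offers no proof of it; it is presented precisely as an open problem, motivated by the preceding discussion of Gromov's Conjecture~\ref{conj:gromov} and the cylinder example showing why an injectivity-radius correction is needed. There is therefore no proof in the paper to compare your proposal against.

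As for the proposal itself: you correctly identify that the conjecture is open and that its Euclidean core is Gromov's Conjecture~\ref{conj:gromov}, which is also open. So what you have written is not a proof but a conditional strategy, and you say as much. Two points about that strategy are worth flagging. First, your control of the per-ball ratio $L/D$ by the global $L/D$ does not work as stated: a ball of radius $c\,\inj(p_\alpha)$ may contain only a tiny arc of the net, so the local length can be arbitrarily small relative to the local diameter, and there is no reason the local $L/D$ should be comparable to the global one. What you actually need in each ball is a bound on the local \emph{total imbalance}, and that comes directly from counting the edge-crossings of $\partial B_\alpha$ (each contributing imbalance at most $1$) together with the original unbalanced vertices inside; $\tilde L$ then bounds the sum of these counts over all balls. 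The quantity $L/D$ plays no evident role in the localization step, and indeed it is not clear it is needed at all once $\tilde L$ and $n$ are available---which is consistent with the paper's own phrasing, where $L/D$ appears mainly to keep contact with the Euclidean formulation rather than because it is known to be necessary. Second, even granting Gromov's conjecture in each chart, the ``phantom imbalance'' issue you raise is real and not obviously absorbed by $\tilde L$: the angle defects scale with curvature times (ball radius)$^2$, and summing these over $O(\tilde L)$ balls need not be bounded without further geometric input on $M$. None of this is fatal to the strategy, but it means the reduction is not routine even modulo the Euclidean case.
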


\pagebreak
\section{The Star}
\label{sect:thestar}

\subsection{Overview}

The \enquote{star} $G_n(\varphi)$ constructed in this section is a possible example of how a geodesic net can be constructed that fulfills the following requirements:
\begin{itemize*}
	\item The net has 14 unbalanced vertices (of arbitrary degree)
	\item The net has an arbitrarily large (finite) number of balanced vertices
	\item All edges have weight one (as is required by our definition of geodesic nets)
\end{itemize*}
In fact, the third condition is what makes the present construction both interesting but also quite sophisticated. If we allowed integer weights on our edges, there would be much simpler constructions of geodesic nets with just three unbalanced vertices and an arbitrary number of balanced vertices (see \ref{fig:weightedthree}).

Our construction will work as follows: First, we will construct a highly symmetric geodesic net $G_n(0)$, layer by layer, that provides for an arbitrarily large number of balanced vertices. To arrive at a result as depicted in figure \ref{fig:star100}, we first need to build a toolbox to be used during the construction.

This highly symmetric net has edges of integer weights. That is why we will make sure that our construction works for a small deviation from the symmetric case as well, arriving at a net $G_n(\varphi)$. This deviation is intended to remove any integer weights.

As it turns out, showing that for some nonzero deviation $\varphi\in(-\epsilon,\epsilon)$, none of the edges of $G_n(\varphi)$ \enquote{overlap} necessitates a close look at a quite complicated finite recursive sequence. More precisely, we need to ensure that this sequence never repeats. We will present explicit formulas for this sequence as well as numerical results strongly suggesting that this sequence does in fact never repeat.

Assuming that this sequence never repeats, the \enquote{star} constructed in this section would therefore be an example for a sequence of geodesic nets with a fixed number of unbalanced vertices but an arbitrarily large number of balanced vertices.

\subsection{Construction Toolbox}

We will first build our \enquote{toolbox} to facilitate the construction of the geodesic net below.

\subsubsection{Suspending}

Suspending is a process that adds an additional edge to a vertex $v$ to change its imbalance.

\vspace{2mm}
\begin{mdframed}[style=greybox]
\begin{minipage}{0.8\textwidth}
\begin{method}[Single-hook suspension]
	Consider a vertex $v$ and another vertex $P$, called the \emph{hook}. We \emph{suspend} $v$ from $P$ by adding the edge $vP$.
\end{method}
\end{minipage}\hfill
\begin{minipage}{0.15\textwidth}
	\raggedleft
	\includegraphics{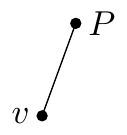}
\end{minipage}
\end{mdframed}
\vspace{3mm}
\begin{mdframed}[style=greybox]
    \setlength{\parindent}{0em}
	\begin{method}[Two-hook suspension]
	Consider a vertex $v$ and two other vertices $P$, $Q$ such that all three interior angles of the triangle $\Delta PvQ$ are less than $120\degrees$.
		
	\begin{minipage}{0.6\textwidth}
	\setlength{\parskip}{0.5em}
	\setlength{\parindent}{0em}
	There is a unique point $F$ -- called the Fermat point -- inside the triangle $\Delta PvQ$ such that the edges $PF$, $QF$ and $vF$ form angles of $120\degrees$ at $F$. It can be constructed as follows:
	\begin{itemize*}
		\item Let $X$ be the third vertex of the unique equilateral triangle that has base $PQ$ and that is lying outside the triangle $\Delta PvQ$.
		\item Let $c$ be the unique circle defined by the points $P$, $Q$ and $X$.
		\item Note that $c$ and the segment $Xv$ intersect at two points: $X$ itself and one other point. That other point is $F$.
	\end{itemize*}
	\end{minipage}\hfill
\begin{minipage}{0.35\textwidth}
	\raggedleft
	\includegraphics[width=\textwidth]{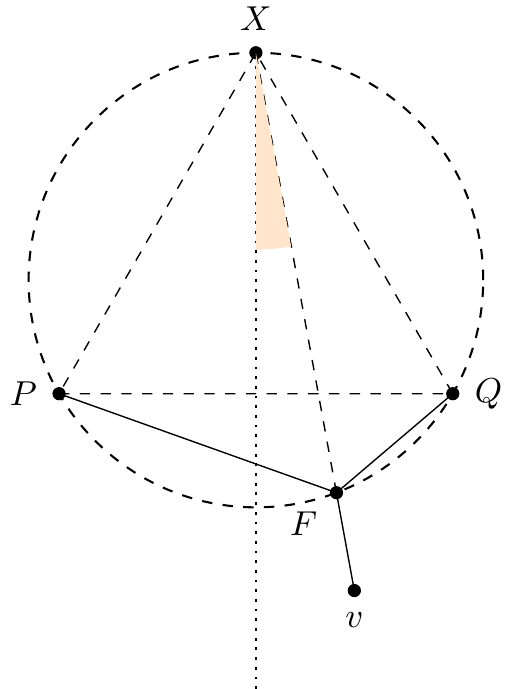}
	\end{minipage}
	That this construction does indeed yield the Fermat point (also known as the Toricelli point) is a result of classic Euclidean Geometry.
	
	We \emph{suspend} $v$ from $P$ and $Q$ by adding $F$ and the edges $PF$, $QF$ and $vF$. Note that now $F$ is a degree three balanced vertex.
	
	The orange angle between $Xv$ and the axis of symmetry of the equilateral triangle will be denoted by $\varphi$ later. Note that if $\varphi=0$, then the picture is symmetric under reflection along $vX$.
	\end{method}
\end{mdframed}

\subsubsection{Winging}

Winging is a process that turns an unbalanced vertex into a balanced vertex.

\vspace{2mm}
\begin{mdframed}[style=greybox]
\begin{minipage}{0.7\textwidth}
\setlength{\parskip}{0.5em}
\setlength{\parindent}{0em}
\begin{method}[Winging a degree $2$ vertex]\label{meth:wing2}
	Consider an unbalanced vertex $v$ of degree $2$ with $\alpha_i$ being the larger angle between the two incident edges, i.e. with $180\degrees<\alpha_i<360\degrees$. We can balance this vertex by \enquote{spreading wings} as follows: Extend the two incident edges to the other side of the vertex, resulting in a degree $4$ balanced vertex.
	
	If $\beta_i$ is the smaller of the two angles between the two new edges (\enquote{wings}), then $\beta_i=360\degrees-\alpha_i$.
\end{method}
\end{minipage}\hfill
\begin{minipage}{0.25\textwidth}
	\raggedleft
	\includegraphics[width=\textwidth]{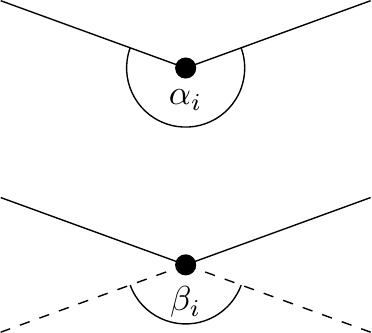}
\end{minipage}
\end{mdframed}
\vspace{3mm}
\begin{mdframed}[style=greybox]
\begin{minipage}{0.7\textwidth}
\setlength{\parskip}{0.5em}
\begin{method}[Winging a degree $3$ vertex]\label{meth:wing3}
	Consider an unbalanced vertex $v$ of degree $3$ such that the total imbalance (i.e. the sum of the unit vectors parallel to an edge) is less than $2$.
	
	We can balance this vertex by adding two edges in a unique way as follows: Since the imbalance is a vector of length less than $2$, there is one (and only one) way of writing its inverse as the sum of two unit vectors. Add the two corresponding edges that balance the vertex in this way (these edges might coincide with existing edges). We arrive at a balanced vertex of degree $5$.
\end{method}
\end{minipage}\hfill
\begin{minipage}{0.25\textwidth}
	\raggedleft
	\includegraphics[width=\textwidth]{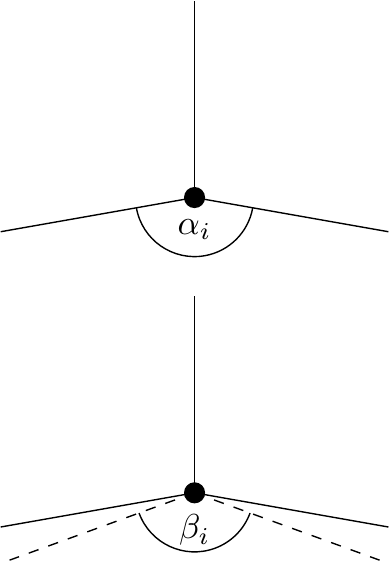}
\end{minipage}

	Note that this construction does \emph{not} require the picture to be symmetric as in the sketch on the right. However, in the case that it is in fact symmetric, it is important to point out a special relationship: After winging, the picture will remain symmetric and we also get the following angle relation: If we denote the smaller of the two angles between the newly added wings by $\beta_i$, basic trigonometry yields $\beta_i=2\cdot\arccos(1/2-\cos(\alpha_i/2))$. Also, as long as $\alpha_i\neq 2\arccos(1/4)$ the two dashed edges will \emph{not} coincide with already present edges since then $\beta_i\neq\alpha_i$.
\end{mdframed}

\subsubsection{About algebraic angles}

Recall the following theorem based on Lindemann-Weierstra\ss:

\begin{theorem}
	If the angle $\alpha$ is algebraic (in radians), then $\cos(\alpha)$ and $\sin(\alpha)$ are transcendental.
\end{theorem}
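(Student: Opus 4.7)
The plan is to deduce this immediately from Lindemann's theorem (the special case of Lindemann--Weierstra\ss{} asserting that $e^\beta$ is transcendental for every nonzero algebraic $\beta$). As stated, the assertion clearly requires the tacit assumption $\alpha\neq 0$ (since $\cos 0=1$ and $\sin 0=0$ are algebraic); I will write the proof for nonzero algebraic $\alpha$.

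First I would observe that the algebraic numbers $\overline{\mathbb{Q}}$ form a field containing $i$, so if $\alpha$ is algebraic then $i\alpha$ is algebraic, and it is nonzero exactly when $\alpha$ is. Applying Lindemann's theorem with $\beta=i\alpha$ gives that $e^{i\alpha}$ is transcendental. This one-line input is the only genuinely nontrivial ingredient of the argument.

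Next I would argue by contradiction. Suppose $\cos\alpha$ were algebraic. Then $1-\cos^2\alpha$ is algebraic, and since $\sin\alpha$ is a root of the polynomial $x^2-(1-\cos^2\alpha)$ with algebraic coefficients, $\sin\alpha$ is algebraic too. Euler's identity
$$e^{i\alpha}=\cos\alpha+i\sin\alpha$$
would then exhibit $e^{i\alpha}$ as an element of the field $\overline{\mathbb{Q}}$, contradicting the transcendence of $e^{i\alpha}$ established in the previous step. The argument ruling out algebraicity of $\sin\alpha$ is identical with the roles of sine and cosine exchanged (equivalently, apply the same reasoning after replacing $\alpha$ by $\pi/2-\alpha$, but this detour is unnecessary).

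The only potential obstacle is the appeal to Lindemann--Weierstra\ss{} itself, whose proof is a classical but nontrivial transcendence argument; it should simply be cited rather than reproved. Everything else reduces to elementary closure properties of $\overline{\mathbb{Q}}$ under addition, multiplication, and taking roots of polynomials with algebraic coefficients.
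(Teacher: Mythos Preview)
Your proof is correct and is the standard derivation; the paper itself does not supply a proof but simply recalls the statement as a known consequence of Lindemann--Weierstra\ss{}, so there is nothing to compare against beyond noting that you invoke exactly the theorem the paper names. One small quibble: your parenthetical suggestion to replace $\alpha$ by $\pi/2-\alpha$ does not actually work, since $\pi/2-\alpha$ is transcendental and Lindemann's theorem would no longer apply --- but as you immediately say, this detour is unnecessary, and the direct symmetric argument (if $\sin\alpha$ were algebraic then $\cos\alpha$ would be too, forcing $e^{i\alpha}\in\overline{\mathbb{Q}}$) is the right one.
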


We will fix an angle $\alpha_0>240\degrees$ which will be close to $240\degrees$, but so that $\alpha_0$ is in fact algebraic (and therefore its sine and cosine are transcendental, a property that we will need below). We will choose $\alpha_0=88/21 \textrm{ (rad)}\approx 240.1\degrees$, but of course any other algebraic angle closer to $240\degrees$ would also work.

\subsubsection{The parameters $n$ and $\varphi$}

The construction of the geodesic net $G_n(\varphi)$ relies on two parameters $\varphi$ and $n$.

We will start with an \emph{outer circle}, that is fixed and doesn't change under any of the parameters. We the proceed and construct an \emph{inner circle} whose deviation from the symmetric case is measured by the angle $\varphi$. This inner circle is the \enquote{zeroth layer} of the construction. We will then add a total of $n$ layers, producing more and more balanced vertices while keeping the number of unbalanced vertices fixed.

\subsubsection{Outer circle}

The \emph{outer circle} is given by seven equiangularly distributed points on a circle. These seven vertices will be one half of the $14$ unbalanced vertices of the resulting net.

Note that the whole construction will be scaling invariant, so we can choose an arbitrary radius for the outer circle. We will fix the scale of the picture further below.

Whenever we will use the process of \emph{suspending} a vertex as defined above, the two \textit{hooks} will be two neighbouring vertices on the outer circle.

\begin{figure*}[!htb]
	\centering
	\includegraphics{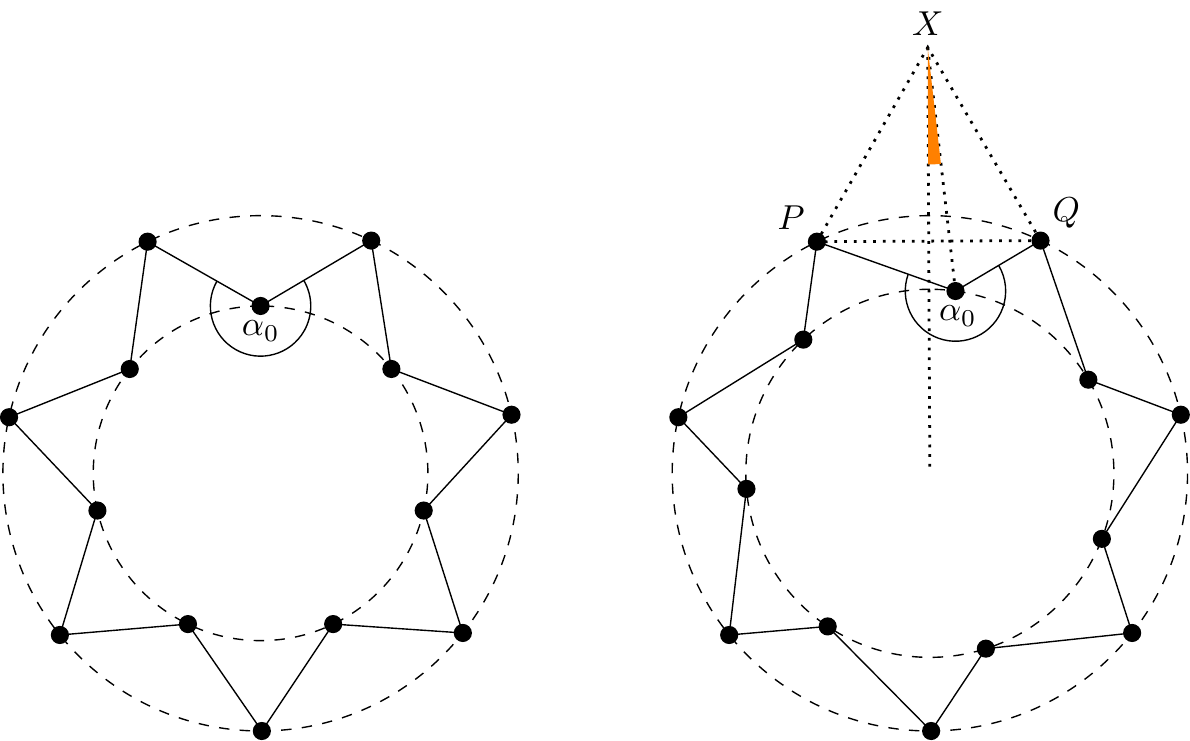}
	\caption{Outer circle and inner circle (= zero-th layer), symmetric case (left) and deviated case (right). Note that for fixed $\alpha_0$, there is a unique way to construct the inner circle from the outer circle once the deviation $\varphi\in(-\epsilon,\epsilon)$ is given. The reference point for deviation is given by $X$, which is the third point of the equilateral triangle that has two adjacent points $P$ and $Q$ on the outer circle as base. Note that the seven points on the inner circle are always deviated by the same angle $\varphi$. In other words: with deviation, there is no reflectional symmetry anymore, but rotational symmetry is maintained.}
	\label{fig:circles}
\end{figure*}

\subsubsection{Inner circle}

The \emph{inner circle} is defined as follows: First, we fix $\alpha_0$ as specified above. Note that this angle will not change under deviation later. Fix two neighbouring points $P$ and $Q$ on the outer circle and let $X$ be the third vertex of the equilateral triangle with base $PQ$ that lies outside the outer circle (see figure \ref{fig:circles}). Recall that we are provided a deviation angle $\varphi\in(-\epsilon,\epsilon)$. Consider the segment $OX$ (where $O$ is the canter of the outer circle) and rotate this segment around $X$ by $\varphi$ There is a unique vertex $v$ on this segment such that $\angle PvQ=\alpha_0$. This is one vertex of the inner circle. The other six vertices of the inner circle are then provided by rotational symmetry (again, see figure \ref{fig:circles}).

We then connect the outer and inner circles by edges as depicted. To later simplify calculations, we now scale the picture so that the radius of the inner circle is 1.

\subsection{The construction}

\subsubsection{Overview of the construction}

The initial setup of \emph{outer and inner circle} as described above is denoted as $G_0(\varphi)$. It is trivially a geodesic net with 14 unbalanced and no balanced vertices.

We will now add \emph{layers} to $G_0(\varphi)$ to arrive at geodesic nets $G_0(\varphi),G_1(\varphi)\dots,G_n(\varphi)$ with the following properties:
\begin{itemize*}
	\item Each $G_i(\varphi)$ has $14$ unbalanced vertices.
	\item The number of balanced vertices goes to infinity as $i\longrightarrow\infty$.
\end{itemize*}
So by choosing $n$ large enough, we get a geodesic net $G_n(\varphi)$ with $14$ unbalanced vertices and $N$ balanced vertices.

There is, however, one important caveat: We want the geodesic net to only have edges of weight one. As stated above: If we allowed for weighted edges, much simpler examples could be constructed.

In light of that requirement, we will observe the following:

\begin{itemize*}
	\item If we do \emph{not} introduce deviation, i.e. if we fix $\varphi=0$, we get highly symmetric geodesic nets $G_i(0)$, many edges of which will intersect non-transversally. This means that some edges would need to be represented using integer weights.
	\item However, for small $\varphi\in(-\epsilon,\epsilon)$, we get geodesic nets $G_i(\varphi)$ with significantly less symmetry and for which numerical results strongly suggest that all edges intersect transversally (if at all). So they are in fact nets with edges of weight one.
\end{itemize*}

\subsubsection{About the feasibility of the process and smooth dependence}

As we will see in the constructive process below, there are certain requirements on the behaviour of angles and lengths that are necessary to make this construction possible. We will proceed as follows:
\begin{itemize*}
	\item We will first describe the construction, which will be the same for the non-deviated and the deviated case. This construction will use several such requirements.
	\item We will then prove that all requirements are fulfilled for the non-deviated case $\varphi=0$.
	\item We will observe that each iteration of the construction smoothly depends on the previous one.
	\item Since all our requirements turn out to be restrictions of angles and lengths to open intervals and since the construction smoothly ($\Rightarrow$ continuously) depends on the initial setup, the requirements will therefore also be fulfilled for small $\varphi\in(-\epsilon,\epsilon)$.
\end{itemize*}

\subsubsection{Iterative process}

\begin{figure*}
	\centering
	\includegraphics[width=\textwidth]{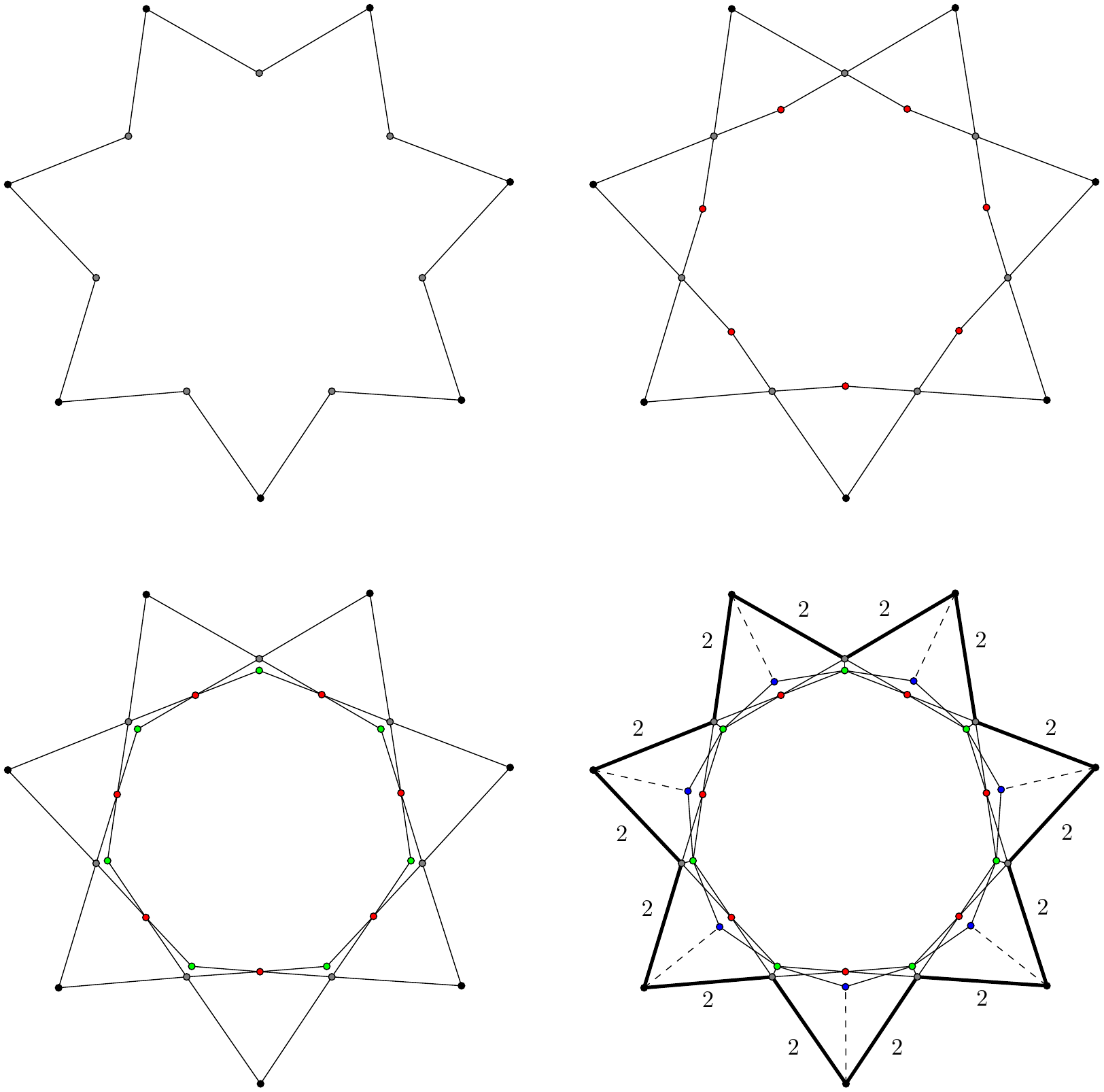}
	\caption{First steps of the construction for $\varphi=0$.\newline
	\textbf{Top left:} Outer vertices ($V_{-1}$, black) and inner vertices ($V_{0}$, grey)\newline
	\textbf{Top right:} The vertices of $V_0$ (grey) have been winged and the wings meet at the new vertices of $V_1$ (red)\newline
	\textbf{Bottom left:} The vertices of $V_1$ (red) have been winged and the wings meet at the new vertices of $V_2$ (green)\newline
	\textbf{Bottom right:} The vertices of $V_2$ (green) first were suspended (note the edges from green to grey and the double weight on the outer edges) and then were winged. The wings meet at the new vertices of $V_3$ (blue)\newline
	\textbf{After these first three steps}, the seven vertices on the outer circle as well as the vertices of $V_3$ are unbalanced. There are 21 balanced vertices indicated. During the construction, we also get additional \enquote{accidental} degree four balanced vertices at points of intersection.\newline
	\textbf{For the next step}, the dashed edges would be added to \emph{suspend} the vertices of $V_3$ (blue) and then each of them would be \enquote{winged} again.}
	\label{fig:iterations}
\end{figure*}

We denote the set of vertices on the outer circle by $V_{-1}$ and on the inner circle by $V_0$ and proceed to construct $V_i$ for $i\geq 1$.

The reader is encouraged to first consult figure \ref{fig:iterations} that explains the process visually.

Consider the vertices of $V_i$, each of which is a degree $2$ vertex that is adjacent to two vertices of $V_{i-1}$. Using the $14$ connecting edges, we get a $14$-gon whose vertices alternate between vertices of $V_{i-1}$ and vertices of $V_i$. For the interior angle $\alpha_i$ at the vertices $V_i$ (\emph{not} at the vertices of $V_{i-1}$), one of the following two cases can occur: $\alpha_i>180\degrees$, called Case A; or $\alpha_i<180\degrees$, called Case B (We justify $\alpha_i\neq 180\degrees$ for all $i$ later).

\textbf{Case A: $\alpha_i>180\degrees$.} In this case, the vertices of $V_i$ are unbalanced vertices of degree $2$ such that we can \textit{wing a degree $2$ vertex} getting an angle $\beta_i=360\degrees-\alpha_i<180\degrees$ as described above. Each wing will end as soon as it intersects with another wing. At those seven points of intersection, we fix the seven vertices of $V_{i+1}$. Proceed to the next iteration.

\textit{Examples for Case A in figure \ref{fig:iterations} are the first two steps, namely the grey and red vertices.}

\textbf{Case B: $\alpha_i<180\degrees$.} In this case, we will first add an outwards edge to each vertex of $V_i$ using suspension. We distinguish two cases by the parity of $i$.

\textbf{Case B1, $i$ is even:}. In this case, by construction, each vertex $v$ of $V_i$ is close to a radial line through the origin and at the half-angle between two outer vertices $P$ and $Q$ (for deviation $\varphi=0$, $v$ is in fact \emph{on} that radial line). Consider the triangle $\Delta PvQ$. It is clear that $\angle vPQ,\angle PQv\leq 90\degrees<120\degrees$. Furthermore note that $v$ is inside the \emph{inner circle} (we will prove this in lemma \ref{thm:xlessthanone}). Even if $v$ were \emph{on} the inner circle, we would have $\angle QvP<120\degrees$ (by the choice of $\alpha_0 > 240\degrees$). So since $v$ is inside the inner circle, we still have $\measuredangle QvP<120\degrees$. Consequently, we can do a \emph{two-hook suspension} of $v$ from the hooks $P$ and $Q$ as described above.

\textit{And example for Case B1 in figure \ref{fig:iterations} is the third step, namely the green vertices.}

\textbf{Case B2, $i$ is odd:}. In this case, by construction, each vertex $v$ of $V_i$ is close to a radial line through the origin and one of the vertices $P$ on the outer circle (again, for deviation $\varphi=0$, $v$ is \emph{on} that radial line). We will do a \emph{one-hook suspension} of $v$ from $P$ by adding their connecting edge.

\textit{And example for Case B2 in figure \ref{fig:iterations} is the fourth step, namely the blue vertices.}

\textbf{After applying case B1 or B2}, each vertex of $V_i$ now is a degree $3$ unbalanced vertex. We will prove below that it has imbalance of less than $2$. Therefore, we can \emph{wing} this vertex of degree $3$. Each wing will end as soon as it intersects with another wing. At those seven points of intersection, we fix the seven vertices of $V_{i+1}$. Proceed to the next iteration.

This describes the whole construction. An example of the non-deviated case ($\varphi=0$) can be found in figure \ref{fig:star100}. We are left to show that the claims that make this construction possible are actually true.

\begin{figure*}
	\centering
	\includegraphics{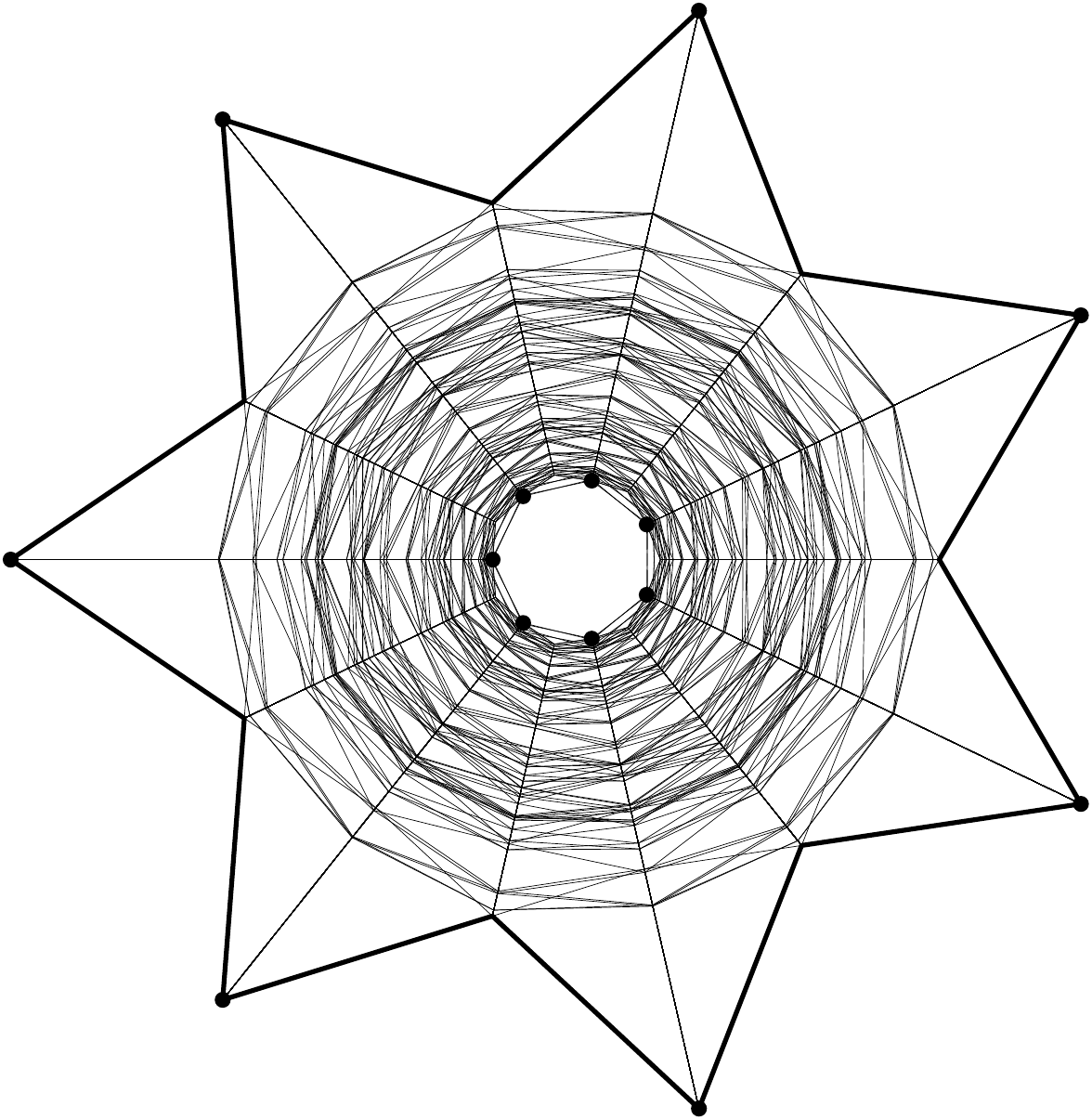}
	\caption{$G_{100}$ of the non-deviated case. The 14 circled vertices are the only unbalanced vertices of this geodesic net. Note that some edges have integer weights of more than $1$. Those are the radial edges, as well as the edges of the outermost $14$-gon. After introducing deviation, these edges will split into weight-one-edges.}
	\label{fig:star100}
\end{figure*}

\subsubsection{Helpful lemmata}

The above construction implicitly uses several geometric facts which we will prove in this section. It is interesting to note that parts of the following lemma could be proven similarly for a construction starting with more than $7$ outer vertices, but fail for $6$ vertices. More specifically, we prove $\alpha_i>120\degrees$ below, which would not be true if we started with $3$, $4$, $5$, $6$ vertices. This is the reason for the seemingly arbitrary choice of seven as the \enquote{magic number} of the construction.

Note the following:
\begin{lemma}
	The positions of all vertices depend smoothly on the deviation angle $\varphi\in(-\epsilon,\epsilon)$.
\end{lemma}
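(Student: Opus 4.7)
The plan is to prove the lemma by induction on the layer index $i$, showing that every geometric operation used in the construction preserves smooth dependence on $\varphi$. The outer vertices $V_{-1}$ are constant in $\varphi$, so smoothness is trivial there; all the work is propagating smoothness through the construction of $V_0$ and then through each iteration $V_i \mapsto V_{i+1}$.

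For the base case, I would treat the inner circle $V_0$. By the rotational symmetry of the construction, it suffices to track a single inner vertex $v(\varphi)$. This vertex is defined as the intersection of two smooth curves: the line through $X$ obtained by rotating the segment $OX$ about $X$ by the angle $\varphi$, and the circular arc consisting of points $w$ inside the triangle region with $\angle PwQ = \alpha_0$. Both curves depend smoothly on $\varphi$ (the arc does not depend on $\varphi$ at all), and at $\varphi = 0$ their intersection is transverse because the rotated line passes through $X$ while the $\alpha_0$-arc is tangent to none of the rays through $X$ by the choice $\alpha_0 > 240\degrees$. The implicit function theorem then yields a smooth parametrization $\varphi \mapsto v(\varphi)$ on a small interval $(-\epsilon_0, \epsilon_0)$.

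For the inductive step I would treat the three operation types separately. In Case~A, the new vertices of $V_{i+1}$ are intersections of wings extended from adjacent vertices of $V_i$; each wing is the extension of a segment joining a vertex of $V_i$ to a vertex of $V_{i-1}$, so its defining line is a smooth function of the endpoint positions. The intersection of two such lines is a smooth function of the lines provided they are not parallel, and at $\varphi = 0$ the rotational symmetry gives a strictly positive crossing angle, so the intersection persists smoothly for small $\varphi$. In Case~B1 the Fermat point $F$ of the triangle $\Delta PvQ$ is the unique interior point where three edges meet at $120\degrees$; it can be read off as the intersection of the segment $Xv$ (using the auxiliary equilateral triangle) with the circumscribed circle through $P$, $Q$, $X$, which is a smooth rational function of the coordinates of $P$, $v$, $Q$ as long as all three interior angles stay strictly below $120\degrees$. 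In Case~B2 the operation is just the addition of the straight segment $vP$, which introduces no new vertex, so smoothness is immediate. After either suspension, the subsequent winging of a degree-$3$ vertex is defined by the system $\sum \cos\alpha_j = 2\,\imb(v)$, $\sum \sin\alpha_j = 0$ for the two new edge directions; the Jacobian of this system with respect to the two unknown angles is nondegenerate whenever $\imb(v) < 2$ and the three edge directions are in general position, so the implicit function theorem again yields smooth dependence.

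The main obstacle is not producing the smooth dependence formulas themselves but rather ensuring uniformity: each step relies on an open nondegeneracy condition (transversality of wing intersections, interior angles below $120\degrees$ for the Fermat construction, imbalance strictly below $2$ for the degree-$3$ winging, noncoincidence of new wings with existing edges), and smoothness only persists as long as all such conditions continue to hold. The strategy is therefore to check each condition explicitly at $\varphi = 0$ (which the subsequent lemmas of the paper do), observe that each is an open condition on the finite list of vertex positions produced up to stage $i$, and then shrink $\epsilon$ finitely many times, once for each of the finitely many layers $0, 1, \ldots, n$. Since $n$ is fixed, the resulting $\epsilon = \epsilon_n > 0$ gives a common interval $(-\epsilon, \epsilon)$ on which the full list of vertex positions of $G_n(\varphi)$ depends smoothly on $\varphi$, which is precisely the claim.
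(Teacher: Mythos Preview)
Your proposal is correct and follows essentially the same inductive structure as the paper's proof: both argue layer by layer that each construction step (inner circle, suspension, winging, intersection of wings) is a smooth operation on the previously constructed vertex positions. You are in fact more explicit than the paper about invoking the implicit function theorem at each transversality/nondegeneracy step and about the need to shrink $\epsilon$ finitely many times to obtain a uniform interval for the fixed net $G_n(\varphi)$; the paper leaves these points implicit.
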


\begin{proof}
The outer circle never moves. The definition of the inner circle (which is the layer $V_0$) makes it clear that the position of the vertices of that layer depend smoothly on $\varphi$.

Since, to find all the other layers, we are using nothing but suspending and winging as defined previously, we only need to check those processes. Assume the position of the vertices up to $V_{i}$ depend smoothly on $\varphi$.
\begin{itemize*}
	\item The angles of the incoming edges to $V_i$ from $V_{i-1}$ only depend on the positions of $V_i$ and $V_{i-1}$ which depend smoothly on $\varphi$ by induction hypothesis.
	\item If \emph{one-hook suspension} is necessary: $P$ is on the outer circle, so it doesn't change under $\varphi$. Since the position of $v$ depends smoothly on $\varphi$, so does the angle of the hooking edge. So the imbalance of $v$ before winging will change smoothly.
	\item If \emph{two-hook suspension} is necessary: $P$, $Q$ and $X$ don't change under $\varphi$. Since the position of $v$ depends smoothly on $\varphi$, so does the angle of the hooking edge. So the imbalance of $v$ before winging will change smoothly.
\end{itemize*}
We now have established that the angles of all incoming edges to the vertices of $V_i$ and therefore the imbalance at the vertices of $V_i$ after possible suspension depends smoothly on $\varphi$. Checking the two possibilities for winging, it is apparent that the angles of the outgoing wings depend smoothly on the imbalance. Since the vertices of the next layer are defined to be the intersection of those wings, the positions of the next layer depend smoothly on $\varphi$.
\end{proof}

Note that all the following lemmas assert inequalities regarding angles and distances. In light of the previous lemma, it is therefore enough to prove them for $\varphi=0$. By smooth dependence (and therefore continuous dependence), they are then still true for small $\varphi\in(-\epsilon,\epsilon)$.

We will first prove the following technical lemma, the usefulness of which will be apparent later.

\begin{lemma}\label{thm:anglecalc}
	Consider the angles $\alpha_i$ and $\beta_i$ as the angle between the incoming edges and the angle between the outgoing edges during winging (see the figures describing the winging process). We have:
	\begin{enumerate*}
		\item $\alpha_i\neq 180\degrees,2\arccos(1/4)$ for all $i\geq 0$
		\item $120\degrees<\alpha_i< 190\degrees$ for all $i\geq 1$
		\item $120\degrees<\beta_i<180\degrees$ for all $i\geq 1$
	\end{enumerate*}
\end{lemma}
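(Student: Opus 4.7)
The plan is to prove all three assertions simultaneously by induction on $i$, since the bound on $\alpha_{i+1}$ depends on the geometry of the wings at layer $i$, which depends on $\alpha_i$ and on which of Cases A, B1, B2 is triggered. For the base case $i=1$, I would proceed by direct trigonometric computation: since $\alpha_0\approx 240.1\degrees$, Case A applies at layer $0$ and $\beta_0=360\degrees-\alpha_0\approx 119.9\degrees$. The intersection of the wings of two adjacent $V_0$-vertices determines a vertex of $V_1$; planar trigonometry on the isosceles configuration formed by this pair of $V_0$-vertices and the intersection point (using the angular spacing $2\pi/7$ and that $V_0$ lies on the inner circle of radius $1$) expresses $\alpha_1$ explicitly in $\alpha_0$, from which one reads off $120\degrees<\alpha_1<190\degrees$ together with the corresponding bound on $\beta_1$.

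For the inductive step, assuming the claims through index $i$, I would split according to the case governing layer $V_i$. In Case A, the relation $\beta_i=360\degrees-\alpha_i$ together with $180\degrees<\alpha_i<190\degrees$ immediately gives $170\degrees<\beta_i<180\degrees$. In Cases B1 and B2, the hypothesis $120\degrees<\alpha_i<180\degrees$ guarantees the suspension is well-defined --- the triangle $\Delta PvQ$ has all interior angles less than $120\degrees$, using the separate lemma that $v$ lies strictly inside the inner circle --- after which the degree-$3$ winging formula expresses $\beta_i$ in terms of $\alpha_i$ and the suspension angles, which I would bound in the required range. In all cases $\alpha_{i+1}$ appears as the interior angle of the new $14$-gon at the intersection of two adjacent wings, and can be written as a trigonometric function of $\alpha_i$, $\beta_i$, and the two edge lengths emanating from $V_i$, whose bounds I would verify directly.

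For claim (a), the algebraicity of $\alpha_0=88/21$ radians, and the consequent transcendence of $\cos\alpha_0$ and $\sin\alpha_0$, is crucial. By induction, every coordinate of every constructed vertex is a rational function over $\mathbb{Q}$ of $\cos\alpha_0$ and $\sin\alpha_0$, since suspending, winging, and intersecting lines only involve solving polynomial equations with rational coefficients; hence $\cos\alpha_i$ is algebraic over $\mathbb{Q}(\cos\alpha_0,\sin\alpha_0)$. If $\alpha_i$ equalled $180\degrees$ or $2\arccos(1/4)$, then $\cos\alpha_i\in\{-1,-7/8\}$, imposing a nontrivial algebraic relation on $\cos\alpha_0$ and $\sin\alpha_0$ (together with $\cos^2\alpha_0+\sin^2\alpha_0=1$) and contradicting Lindemann--Weierstra\ss. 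Verifying that this relation is nontrivial rather than identically satisfied is the delicate part; I would handle it by exhibiting a rational specialization of $\cos\alpha_0$ for which $\cos\alpha_i$ is demonstrably different from $-1$ and $-7/8$, so that the relation is not the zero polynomial.

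The main obstacle I expect is establishing $\alpha_{i+1}>120\degrees$ uniformly in $i$. This is where the choice of seven outer vertices enters essentially, as the authors remark just before the lemma: the analogous bound fails for $3$, $4$, $5$, or $6$ outer vertices. A successful estimate must exploit the central angle $2\pi/7$ of the $14$-gon in a nontrivial way, likely through a geometric comparison between the angular spread of the wings issuing from $V_i$ and the angular spacing of adjacent layer vertices.
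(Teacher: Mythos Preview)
Your proposal has the right inductive shape but is missing the single observation that makes the paper's proof go through cleanly: in the symmetric case $\varphi=0$, the vertices of $V_i$ and $V_{i+1}$ form a $14$-gon whose interior angles alternate between $\beta_i$ and $\alpha_{i+1}$, so the angle sum gives
\[
7\beta_i + 7\alpha_{i+1} = 12\cdot 180\degrees,\qquad \alpha_{i+1}=\tfrac{2160\degrees}{7}-\beta_i.
\]
Thus $\alpha_{i+1}$ is a function of $\beta_i$ alone; no edge lengths enter. Combined with the winging formulas $\beta_i=360\degrees-\alpha_i$ (Case~A) and $\beta_i=2\arccos(1/2-\cos(\alpha_i/2))$ (Case~B, which applies uniformly to B1 and B2 since in the symmetric case the suspension edge is radial), the entire recursion is one-dimensional in $\alpha_i$. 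Your plan to express $\alpha_{i+1}$ as a ``trigonometric function of $\alpha_i$, $\beta_i$, and the two edge lengths'' is unnecessarily complicated and is precisely why you end up stuck on the lower bound $\alpha_{i+1}>120\degrees$: once you have the angle-sum formula, $\beta_i<180\degrees$ immediately gives $\alpha_{i+1}>\tfrac{2160\degrees}{7}-180\degrees\approx 128.6\degrees$, and the role of the number $7$ is transparent (with $6$ outer vertices the analogous bound would be exactly $120\degrees$, which fails). The upper bound $\alpha_{i+1}<190\degrees$ likewise follows by checking the two explicit one-variable maps on the interval $(120\degrees,190\degrees)$.

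For part~(a), your transcendence strategy is in the right spirit but over-engineered and slightly off: coordinates of constructed points are not rational over $\mathbb{Q}$ in $\cos\alpha_0,\sin\alpha_0$ alone, since the outer-circle vertices bring in $\cos(2\pi/7)$ and $\sin(2\pi/7)$, and your ``nontriviality via specialization'' step is not worked out. The paper instead uses the one-dimensional recursion directly: from the explicit formula for $\alpha_{i+1}$ in terms of $\alpha_i$ one checks that $\cos\alpha_{i+1}$ is algebraic if and only if $\cos\alpha_i$ is (the coefficients $\cos(12\cdot 180\degrees/7)$, $\sin(12\cdot 180\degrees/7)$ are themselves algebraic), and since $\cos\alpha_0$ is transcendental by Lindemann--Weierstra\ss, so is every $\cos\alpha_i$; in particular $\alpha_i$ never equals $180\degrees$ or $2\arccos(1/4)$.
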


\begin{proof}
	As established, it is enough to consider the symmetric case $\varphi=0$.

	Recall that $\alpha_0=88/21 \textrm{(rad)}\approx 240.1$ and therefore $\beta_0\approx 119.9\degrees$. The formulas for $\beta_i$ depending on $\alpha_i$ were derived above when \emph{winging} was defined:
	\begin{align*}
		\beta_{i}&=\begin{cases}
			360\degrees-\alpha_i&\alpha_i>180\degrees\quad\text{(winging of degree 2 vertex)}\\
			2\cdot\arccos(1/2-\cos(\alpha_i/2))&\alpha_i<180\degrees\quad\text{(winging of degree 3 vertex)}\\
		\end{cases}
	\end{align*}
	Furthermore, since the vertices of $V_i$ and the vertices of $V_{i+1}$ form a 14-gon for which the interior angles at $V_i$ are $\beta_i$ (outgoing edges) and the interior angles at $V_{i+1}$ are $\alpha_{i+1}$ (incoming edges), we have
	\begin{align*}
		7\beta_i+7\alpha_{i+1}=12\cdot180\degrees\Leftrightarrow\alpha_{i+1}=\frac{12\cdot180\degrees}{7}-\beta_i
	\end{align*}
	For a visualization of the interdependence of these sequences see figure \ref{fig:alphabetaxdependence}.
	
	\begin{figure}
	\centering
	\includegraphics{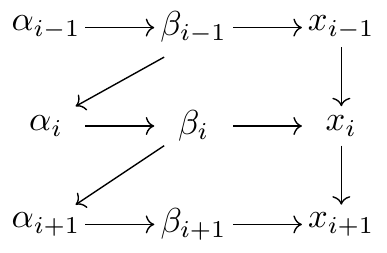}
	\caption{The interdependences of $\alpha_i$, $\beta_i$ and $x_i$}
	\label{fig:alphabetaxdependence}
\end{figure}
	
	Note that after proving (a), it is indeed clear that we do not need to consider the case $\alpha_i=180\degrees$. We proceed by induction. Note that (a) starts at $i=0$ whereas (b) and (c) start at $i=1$:
	\begin{enumerate*}
		\item Recall that $\cos(\alpha_0)$ is not algebraic (by our initial choice of $\alpha_0$, see above). We will prove the following fact which will imply the required result: $\cos(\alpha_i)$ is never algebraic for $i\geq 0$. The base case is given.
		
			To proceed with induction, first consider the case where $\alpha_i>180\degrees$ and therefore
			\begin{align*}
				&\alpha_{i+1}=12\cdot 180\degrees/7-360\degrees+\alpha_i\\
				\Rightarrow&\cos(\alpha_{i+1})=\cos(12\cdot 180\degrees/7-360\degrees+\alpha_i)\\
				\Rightarrow&\cos(\alpha_{i+1})=\cos(12\cdot 180\degrees/7+\alpha_i)\\
				\Rightarrow&\cos(\alpha_{i+1})=\cos(12\cdot 180\degrees/7)\cos(\alpha_i)-\sin(12\cdot 180\degrees/7)\sin(\alpha_i)\\
				\Rightarrow&\cos(\alpha_{i+1})=\cos(12\cdot 180\degrees/7)\cos(\alpha_i)-\sin(12\cdot 180\degrees/7)\sqrt{1-\cos^2(\alpha_i)}
			\end{align*}
			It follows that $\cos(\alpha_{i+1})$ is algebraic if and only if $\cos(\alpha_i)$ is algebraic.
			
			Similarly, if $\alpha_i<180\degrees$, then
			\begin{align*}
				&\alpha_{i+1}=12\cdot 180\degrees/7-2\cdot\arccos(1/2-\cos(\alpha_i/2))\\
				\Rightarrow&\cos(\alpha_{i+1}/2-6\cdot 180\degrees/7)=1/2-\cos(\alpha_i/2)\\
				\Rightarrow&\cos(\alpha_{i+1}/2)\cos(6\cdot 180\degrees/7)+\sin(\alpha_{i+1}/2)\sin(6\cdot 180\degrees/7)=1/2-\cos(\alpha_i/2)\\
				\Rightarrow&\cos(\alpha_{i+1}/2)\cos(6\cdot 180\degrees/7)+\sqrt{1-\cos^2(\alpha_{i+1}/2)}\sin(6\cdot 180\degrees/7)=1/2-\cos(\alpha_i/2)
			\end{align*}
			It follows that $\cos(\alpha_{i+1}/2)$ is algebraic if and only if $\cos(\alpha_i)$ is algebraic. But note that		
			\begin{align*}
				\cos(\alpha_{i+1})=2\cos^2(\alpha_{i+1}/2)-1
			\end{align*}
			Therefore $\cos(\alpha_{i+1})$ is algebraic if and only if $\cos(\alpha_{i+1}/2)$ is algebraic. The claim follows.
		\item The base case for $\alpha_1$ can be verified by calculation. Now assume $120\degrees<\alpha_i<190\degrees$ and consider $\alpha_{i+1}$. We have two cases:
	
			If $\alpha_i>180\degrees$, then
			\begin{align*}
				\alpha_{i+1}=\frac{2160\degrees}{7}-360\degrees+\alpha_i
			\end{align*}
			We can see that $120\degrees<\alpha_{i+1}<190\degrees$ is true provided $180\degrees<\alpha_i<190\degrees$.
			
			If $\alpha_i<180\degrees$, then
			\begin{align*}
				\alpha_{i+1}=\frac{2160\degrees}{7}-2\cdot\arccos(1/2-\cos(\alpha_i/2))
			\end{align*}
			It follows that $\alpha_{i+1}$ is an increasing function of $\alpha_i$ for $120\degrees<\alpha_i<180\degrees$ and that for $\alpha_i=120\degrees$ we get $\alpha_{i+1}\approx 129\degrees$ whereas for $\alpha_i=180\degrees$ we get $\alpha_{i+1}\approx 188\degrees$. So $120\degrees<\alpha_{i+1}<190\degrees$ is indeed the case.
		\item The bounds for $\beta_i$ are immediate from the bounds on $\alpha_i$ and the formula for $\beta_i$ above.
	\end{enumerate*}
\end{proof}

We can now proceed to prove facts that were relevant to our construction.

\begin{lemma}
	In the construction as defined above:
	\begin{enumerate*}
		\item The incoming edges never meet at an angle of $\alpha_i=180\degrees$, i.e. we always end up with case A or case B as described in the construction.
		\item The total imbalance before winging, even after possible suspension, is always less than $2$, i.e. winging is always possible.
		\item The outgoing edges produced by winging a degree 3 vertex never coincide with the incoming edges.
	\end{enumerate*}
\end{lemma}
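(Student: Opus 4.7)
All three statements are open conditions on the angles and positions of the vertices. Combined with the smooth (hence continuous) dependence of $G_i(\varphi)$ on $\varphi$ established just above, this means it suffices to verify each claim at $\varphi = 0$, i.e.\ in the fully symmetric setting; the assertions then persist for sufficiently small $\varphi$. Claim (a) is then an immediate consequence of Lemma \ref{thm:anglecalc}(a), which asserts $\alpha_i \neq 180\degrees$, so either Case A ($\alpha_i > 180\degrees$) or Case B ($\alpha_i < 180\degrees$) applies.

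For claim (b), Case A imposes no threshold since it only triggers a degree 2 winging, so only Case B needs attention. Fix $v \in V_i$ with $\alpha_i < 180\degrees$ and place $v$ at the origin with the outward radial direction along the positive $x$-axis. By the sevenfold symmetry and by the construction of the suspension step (along the axis of symmetry of $\Delta PvQ$ in Case B1, toward the outer vertex $P$ lying on the radial line in Case B2), the suspension edge $\vec{e}_3$ points in the $+x$ direction while the two incoming edges $\vec{e}_1, \vec{e}_2$ from $V_{i-1}$ are mirror images across the $x$-axis with their bisector pointing in $-x$. A direct trigonometric computation then yields
\[
\vec{e}_1 + \vec{e}_2 + \vec{e}_3 \;=\; \bigl(1 - 2\cos(\alpha_i/2),\, 0\bigr).
\]
Lemma \ref{thm:anglecalc}(b) gives $\alpha_i \in (120\degrees, 180\degrees)$ in Case B, so $\cos(\alpha_i/2) \in (0, 1/2)$ and the imbalance magnitude lies in $(0, 1)$, well below the bound $2$ required by Method \ref{meth:wing3}.

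For claim (c), the two wings $\vec{w}_1, \vec{w}_2$ produced by winging the degree 3 vertex cancel the above imbalance and, by symmetry, are mirror images across the $x$-axis. A wing coincides with an incoming edge precisely when $\beta_i = \alpha_i$; using the relation $\cos(\beta_i/2) = 1/2 - \cos(\alpha_i/2)$ from Method \ref{meth:wing3}, this equality reduces to $\cos(\alpha_i/2) = 1/4$, i.e.\ $\alpha_i = 2\arccos(1/4)$, which is forbidden by Lemma \ref{thm:anglecalc}(a). Coincidence with the suspension edge is excluded trivially, since $\vec{w}_1, \vec{w}_2$ have negative $x$-component while $\vec{e}_3$ has positive $x$-component.

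The substantive step is the geometric bookkeeping in (b): one must verify that in the symmetric case the suspension edge genuinely lies on the opposite side of $v$ from the bisector of the two incoming edges. This in turn rests on the fact that the construction proceeds outward, so $V_{i-1}$ lies radially inside $V_i$ and the interior of the 14-gon formed by $V_{i-1} \cup V_i$ sits on the inward side at each vertex of $V_i$. Once this orientation is pinned down, the trigonometric identities and the transcendence-based Lemma \ref{thm:anglecalc} do the rest of the work.
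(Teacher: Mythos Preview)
Your argument is essentially correct and tracks the paper's approach closely: reduce to $\varphi = 0$ by continuity, then invoke Lemma \ref{thm:anglecalc} for (a) and (c). For (b), however, you work harder than necessary and introduce a factual slip along the way. The paper simply observes that the two incoming edges contribute imbalance $2|\cos(\alpha_i/2)| < 1$ (since $120\degrees < \alpha_i < 190\degrees$) and that suspension adds a single unit vector, so the triangle inequality gives total imbalance $< 2$ with no need to know how the suspension edge is oriented relative to the incoming bisector.

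Your explicit computation of the imbalance as $(1 - 2\cos(\alpha_i/2),\,0)$ is sharper, but it rests on the claim in your final paragraph that ``the construction proceeds outward, so $V_{i-1}$ lies radially inside $V_i$.'' This is backwards: by Lemma \ref{thm:xlessthanone} the radii $x_i$ tend to zero, so the layers move \emph{inward} and $V_{i-1}$ typically lies \emph{outside} $V_i$ (and not even monotonically---the proof of that lemma allows $x_{N+j}$ to overshoot $x_N$ by a factor up to $1.3$). The conclusion you actually need---that the interior angle $\alpha_i$ of the 14-gon opens toward the origin---is nonetheless true, but for a different reason: the 14-gon is star-shaped about the origin regardless of which layer has larger radius. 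Since the lemma only requires imbalance $< 2$, the paper's orientation-free triangle-inequality argument is both simpler and avoids this pitfall entirely.
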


\begin{proof}
	It is again enough to consider the symmetric case $\varphi=0$ since each of the asserted properties can be expressed as an inequality, so they remain true for small $\varphi\in(-\epsilon,\epsilon)$.
	\begin{enumerate*}
		\item This is given explicitly in the previous lemma.
		\item $\alpha_i$ is the angle between incoming edges. Since $120\degrees<\alpha_i<190\degrees$ (see previous lemma), the imbalance produced by the two incoming edges is always less than $1$. Suspension adds an imbalance of at most $1$. Therefore the total imbalance is less than $2$.
		\item We are considering the symmetric case. As stated in the definition of winging, the edges would only coincide if $\alpha_i=\beta_i$ which requires $\alpha_i=2\arccos(1/4)$, which is never the case by the previous lemma.
	\end{enumerate*}
\end{proof}

Finally, we asserted that adding a Fermat point for two-hook suspension is always possible if necessary. This assertion was based on the following fact:

\begin{lemma}\label{thm:xlessthanone} \emph{All layers $V_i$ lie strictly inside the inner circle for $i\geq 1$}. Furthermore, the radius of the layers goes to zero as $i\longrightarrow\infty$.
\end{lemma}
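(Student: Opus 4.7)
By the preceding smooth-dependence lemma, both conclusions are open conditions on $\varphi$, so it suffices to treat the symmetric case $\varphi = 0$. There the $7$-fold rotational symmetry forces each $V_i$ to lie on a single circle centered at $O$, of radius $x_i$, with $x_0 = 1$ by normalization. Consecutive layers $V_i$ and $V_{i+1}$ are offset angularly by $180\degrees/7$, so if $v\in V_i$ and $w\in V_{i+1}$ are adjacent in the alternating $14$-gon, the triangle $Ovw$ has $\angle O = 180\degrees/7$, and the law of sines yields the basic recurrence
\[
\frac{x_{i+1}}{x_i} = \frac{\sin(\angle v)}{\sin(\angle w)}, \qquad \angle v + \angle w = 180\degrees - \frac{180\degrees}{7},
\]
with $\angle v$ determined by whichever of Cases~A, B1, or~B2 applies at step~$i$.

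Computing $\angle v$ is most of the work. In Case~A the outgoing wings are collinear extensions of the incoming edges from $V_{i-1}$, so $\angle v$ is expressible directly in terms of $\beta_i$ and of whether $V_{i-1}$ lies outside or inside the circle of radius $x_i$. In Cases~B1 and~B2, after one- or two-hook suspension the vertex has a third, outward-pointing edge, and the two new wings are then fixed by the degree-$3$ balancing equation; the resulting $\angle v$ depends on $\alpha_i$, on $x_i$ itself, and on the direction of the suspension edge. In every case Lemma~\ref{thm:anglecalc} provides uniform bounds on $\alpha_i$ and $\beta_i$. Induction on $i$ then proves $x_i < 1$: the base cases $x_1, x_2 < 1$ are computed directly from $\beta_0 \approx 120\degrees$ and $\beta_1 \approx 171\degrees$, and for the inductive step one verifies that each of the three case-recurrences maps the open interval $(0,1)$ into itself under the admissible angle ranges.

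For the second assertion $x_i \to 0$, I would argue that Case~B steps contract by a definite factor. In Case~B the suspension edge has a strictly outward-radial component, and the balancing condition then forces the two new winging edges to aim inward on average, so $V_{i+1}$ lies strictly inside the circle of $V_i$ by a quantitative amount depending only on the uniform angle bounds. Case~B occurs infinitely often because each Case~A step decreases $\alpha$ by $360\degrees - 2160\degrees/7 \approx 51.4\degrees$, pushing $\alpha$ out of the Case~A window $(180\degrees,190\degrees)$ into the Case~B regime; hence Case~A cannot occur twice consecutively. The Case~B subsequence therefore supplies a geometric contraction that dominates any bounded, finite expansion coming from the occasional interleaved Case~A step.

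The main obstacle is the Case-B winging analysis. Unlike Case~A, where the outgoing wings are literal extensions of the incoming edges and $\angle v$ is a clean function of $\beta_i$ alone, in Case~B the suspension edge is aimed at \emph{fixed} outer-circle vertices (or at the Fermat point of a fixed pair of such), so the winging angle at $v$ depends not merely on $x_i$ but also on where $V_i^{(0)}$ sits angularly relative to those fixed vertices. Obtaining a uniform contraction constant over the full range of admissible configurations requires carefully tracking this additional dependence.
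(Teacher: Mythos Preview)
Your reduction to the symmetric case and the law-of-sines recurrence
\[
x_{i+1}=x_i\,\frac{\sin(\beta_i/2)}{\sin(1080\degrees/7-\beta_i/2)}
\]
are correct and match the paper. But the heart of your inductive step---``each of the three case-recurrences maps $(0,1)$ into itself''---is false. The ratio $x_{i+1}/x_i$ depends only on $\beta_i$, not on $x_i$, and it exceeds $1$ whenever $\beta_i>1080\degrees/7\approx154.3\degrees$. This happens already at $i=1$ (Case~A, $\beta_1\approx171\degrees$, ratio $\approx1.07$) and at $i=2$ (Case~B, $\beta_2\approx164\degrees$). So single steps can expand, in both Case~A and Case~B, and no per-step invariance argument can work. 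Your contraction claim for Case~B in the second-assertion argument fails for the same reason.

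The paper circumvents this by analyzing \emph{blocks} rather than single steps. It first checks $x_1,\ldots,x_9$ by direct computation, then proves that whenever $180\degrees<\alpha_N<190\degrees$ (i.e.\ a Case~A step), the next Case~A occurs after exactly $\ell\in\{8,9\}$ steps, with $x_{N+j}\le 1.3\,x_N$ for intermediate $j$ and $x_{N+\ell}<0.96\,x_N$. The proof splits $(180\degrees,190\degrees)$ at $183\degrees$ (because the cycle length changes) and uses monotonicity of the iteration map $g(\alpha)$ and of the ratio $h(\alpha)=f(\beta(\alpha))$ to reduce each subcase to a single numerical verification at the extremal $\alpha_N$. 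Induction on blocks then gives both $x_i<1$ and $x_i\to0$.

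A secondary point: your worry in the final paragraph that $\angle v$ depends on $x_i$ in Case~B is a red herring in the symmetric case. At $\varphi=0$ the one-hook suspension edge is exactly radial, and in Case~B1 the Fermat point lies on the radial line through $v$, so the suspension edge is again radial; hence $\beta_i$ is the clean function $2\arccos(1/2-\cos(\alpha_i/2))$ of $\alpha_i$ alone, with no $x_i$-dependence.
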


\begin{proof}
	We will again only consider the symmetric case. By smooth dependence on $\varphi$, the claim follows for the deviated case.
	
	Recall that we scaled the construction so that the inner circle is at a radius of $x_0=1$. During the construction as defined above, if we denote by $x_i$ the distance of the vertices at the $i$-th step from the origin, the claim follows if we prove $x_i<1$ for all $i\geq 1$ and that $x_i\longrightarrow 0$ as $i\longrightarrow\infty$. Note that in the symmetric case:
	\begin{align*}
		x_{i+1}&=x_if(\beta_i)&\text{where }f(\beta_i)&=\frac{\sin\beta_i/2}{\sin(1080\degrees/7-\beta_i/2)}&\text{(see figure \ref{fig:lawofsines})}
	\end{align*}
	where the formula for $\beta_i$, which in itself depends on $\alpha_i$, can be found above.
	
	\begin{figure}
	\centering
	\includegraphics{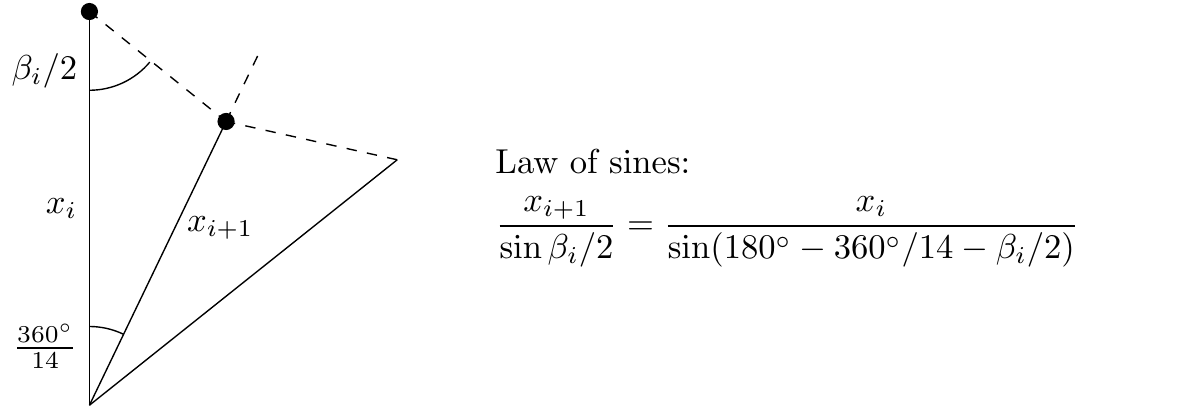}
	\caption{Finding the relation between $x_i$ and $x_{i+1}$.}
	\label{fig:lawofsines}
	\end{figure}
	
	By brute force calculation (for $\alpha_0=88/21$), we can verify the following:
	\begin{itemize*}
		\item $x_i<1$ for $1\leq i\leq 8$
		\item $x_9<0.7$
		\item $180\degrees<\alpha_9<190\degrees$
	\end{itemize*}
	We will now proceed to prove $x_i<1$ for $i>9$, using the fact that the values of $x_i$ are going through loops, at the end of which $x_i$ will have decreased. This can be formalized by the following claim:
	
	\textbf{Claim:} Let $N\geq 9$ such that $180\degrees<\alpha_N<190\degrees$.	Then the following is true for either $\ell=8$ or $\ell=9$:
	\begin{itemize*}
		\item $180\degrees<\alpha_{N+\ell}<190\degrees$
		\item $x_{N+j}\leq 1.3\cdot x_{N}$ for all $j=1,2,\dots,\ell-1$, and
		\item $x_{N+\ell}<0.96\cdot x_N$
	\end{itemize*}
	Note that the lemma follows from inductive application of this claim using $N=9$ as the base case since $1.3\cdot 0.7<1$ and since the sequence is \enquote{generally geometrically decreasing}. We do not need to prove $\alpha_{N+\ell}<190\degrees$ since this is always true for any $\alpha_i$ ($i\geq 1$), see lemma \ref{thm:anglecalc}.
	
	We will now prove the claim. Observe the following facts:
	\begin{itemize*}
		\item $180\degrees<\alpha_N<190\degrees$
		\item Therefore, $\alpha_{N+1}=\frac{2160\degrees}{7}-360\degrees+\alpha_N$, implying $120\degrees<\alpha_{N+1}<180\degrees$.
		\item Furthermore, the larger $\alpha_N$ is, the larger $\alpha_{N+1}$ will be.
		\item As long as $\alpha_{N+j}<180\degrees$, we can write
		\begin{align*}
			\alpha_{N+j+1}=\underbrace{\frac{2160\degrees}{7}-2\arccos(1/2-\cos(\alpha_{N+j}/2))}_{=:g(\alpha_{N+j})}
		\end{align*}
		We can therefore rewrite the sequence $\alpha_{N+1},\alpha_{N+2},\alpha_{N+3}\dots$ as
		\begin{align*}
			\alpha_{N+1},g(\alpha_{N+1}),g(g(\alpha_{N+1})),\dots
		\end{align*}
		This continues until, for some $\ell$, $\alpha_{N+\ell}=g^{(\ell-1)}(\alpha_{N+1})>180\degrees$. In other words, the behaviour of $\alpha_{N},\alpha_{N+1},\alpha_{N+2},\dots$ can be summarized as follows: It starts at $\alpha_N>180\degrees$, then $\alpha_{N+1}$ jumps below $180\degrees$ and the sequence $\alpha_{N+i}$ climbs back up until $\alpha_{N+\ell}>180\degrees$.
		\item We already established that the larger $\alpha_N$ is, the larger $\alpha_{N+1}$ will be. Also, $g(\alpha)$ is an increasing function for $120\degrees<\alpha<180\degrees$. Therefore, the larger $\alpha_N$ is, the larger each $\alpha_{N+j}$ will be for $j=1,\dots,\ell$.
	\end{itemize*}
	
	We observe the following two cases:
	
	\textbf{Case 1:} $183\degrees\leq\alpha_N<190\degrees$. Checking the extremal cases ($183\degrees$ and $190\degrees$), we can observe that either $\alpha_{N+8}>180\degrees$ or $\alpha_{N+9}>180\degrees$. We pick $\ell=8,9$ accordingly.
	
	\textbf{Case 2:} $180\degrees<\alpha_N<183\degrees$. Again checking the extremal cases ($183\degrees$ and $183\degrees$), we can observe that $\alpha_{N+9}$ will be the first angle above $180\degrees$. So we pick $\ell=9$.

	For both cases, note that $x_{N+j+1}=x_{N+j}f(\beta_{N+j})$. We can link the factor $f(\beta)$ to the angles $\alpha$ as follows
	\begin{itemize*}
		\item $\beta=2\cdot\arccos(1/2-\cos(\alpha/2))$ is a decreasing function of $\alpha$.
		\item $f(\beta)=\frac{\sin\beta/2}{\sin(1080\degrees/7-\beta/2)}$ is an increasing function of $\beta$, since $120\degrees<\beta<180\degrees$ as established in a previous lemma.
		\item Therefore, we can write $f(\beta(\alpha))=h(\alpha)$ and \textbf{as long as we underestimate $\alpha$, we overestimate $h(\alpha)$}.
	\end{itemize*}
	We write
	\begin{align*}
			x_{N+j}&=x_Nf(\beta_N)f(\beta_{N+1})f(\beta_{N+2})\cdots f(\beta_{N+j-1})\\
			&=x_Nh(\alpha_N)h(\alpha_{N+1})h(\alpha_{N+2})\cdots h(\alpha_{N+j-1})\\
			&=x_Nh(\alpha_N)h(\alpha_{N+1})h(g(\alpha_{N+1}))\cdots h(g^{(j-2)}(\alpha_{N+1}))
	\end{align*}
	Recall that $g$ is increasing and that $h$ is decreasing, so \textbf{as long as we underestimate $\alpha_N$ (and therefore also $\alpha_{N+1}$), we overestimate $x_{N+j}$}.
	
	We can return to the two cases:
		
	\textbf{Case 1 $183\degrees\leq\alpha_N<190\degrees$.} In this case we need $8$ or $9$ steps and $\alpha_N$ is at least $183\degrees$. By the above considerations, we can simply study the case $\alpha_N=183\degrees$. For larger starting values of $\alpha$, the values of $x$ can only be smaller. In that context, using the above equation:
		\begin{align*}
			x_{N+j}&\leq x_Nh(183\degrees)h(131\degrees)h(g(131\degrees))\cdots h(g^{(j-2)}(131\degrees))
		\end{align*}
	Now calculations yield that $h(183\degrees)h(131\degrees)h(g(131\degrees))\cdots h(g^{(j-2)}(131\degrees))$ will be less than $1.3$ for $j=1,2,\dots,8,9$ and less than $0.96$ for $j=8,9$. So $\ell=8$ or $\ell=9$ has the properties stated in the claim.
	
	\textbf{Case 2 $180\degrees<\alpha_N<183\degrees$.} In this case we need $9$ steps and $\alpha_N$ is at least $180\degrees$. By the above considerations, we can simply study the case $\alpha_N=180\degrees$ (as a limiting case, of course $\alpha=180\degrees$ never happens). For larger starting values of $\alpha$, the values of $x$ can only be smaller. In that context, using the above equation:
	\begin{align*}
			x_{N+j}&\leq x_Nh(180\degrees)h(128\degrees)h(g(128\degrees))\cdots h(g^{(j-2)}(128\degrees))
	\end{align*}
	Now calculations yield that $h(180\degrees)h(128\degrees)h(g(128\degrees))\cdots h(g^{(j-2)}(128\degrees))$ will be less than $1.3$ for $j=1,2,\dots,8,9$ less than $0.96$ for $j=9$. So $\ell=9$ has the properties stated in the claim.
	
	It is noteworthy that a split like the one at $183\degrees$ was necessary. In fact would we have a starting value of $\alpha_N=180\degrees$ but only $8$ steps, the factor would be greater than $1$. Hence the casework to show that this case doesn't happen.
	
	This finishes the proof of the claim and the lemma.
\end{proof}

This concludes the proof that the construction works, both in the symmetric case $\varphi=0$ and for small deviation $\varphi\in(-\epsilon,\epsilon)$.

\subsection{Analyzing the non-deviated construction}

Note that our goal was to construct a sequence of nets $G_i(0)$ such that

\begin{enumerate*}
	\item There are $14$ unbalanced vertices
	\item The number of balanced vertices goes to infinity as $i\longrightarrow\infty$.
	\item Some edges might intersect non-transversally (i.e. overlap).
\end{enumerate*}

The first observation is true as explained above: The only unbalanced vertices are the vertices on the outer ring as well as the seven vertices of the last layer that has been added.

Regarding the second observation, note that lemma \ref{thm:xlessthanone} demonstrates that the radius of the layers $V_i$ approaches zero as $i\longrightarrow\infty$. Therefore, increasing the number of layers increases the number of balanced vertices (otherwise, there would have to be a cyclical phenomenon in the construction, contradicting the fact that the radius goes to zero).

We now turn towards the third observation, which is what makes the introduction of deviation necessary. Note that the edges of the geodesic net can be categorized as follows:
\begin{itemize*}
	\item Whenever a new layer is added through the process of \emph{winging}, this adds 14 edges from $V_{i-1}$ to $V_i$ to the net. We call those the \emph{layer-connecting edges}. This includes the very first 14 edges to set up the outer circle and inner circle as seen in figure \ref{fig:circles}.
	\item Whenever we \emph{suspend} a vertex from a single hook, a single edge is being added. We call it the \emph{suspension edge}.
	\item Whenever we \emph{suspend} a vertex from two hooks, three edges are being added: two edges from two vertices of the \emph{outer circle} (the \emph{hooks}) to the Fermat point, as well as one edge from the Fermat point to the vertex that is being suspended -- we again call these \emph{suspension edges}.
\end{itemize*}

This now raises the question: Which edges can and will intersect non-transversally on $G_i(0)$ (i.e. if $\varphi=0$), either partially or in their entirety? We can observe (see also figure \ref{fig:star100}):
\begin{itemize*}
	\item Note that the Fermat point used in the process of suspension is the same for each layer in the symmetric case. Therefore, two of the three \emph{suspension edges} involving the same two hooks are the same from the hooks to the Fermat point, every single time these hooks are being used. The third suspension edge then always starts at the same Fermat point and continues radially to the vertex that is being suspended.
	\item Similarly, if we do a one-hook suspension, the suspension edge is always radial, so all the suspension edges going to the same hook have an overlay.
	\item Layer-connecting edges, on the other hand, can only intersect any other edge transversally: The Fermat points are outside the inner circle and therefore never meet the layer-connecting edges. Regarding the radial suspension edges, note that layer-connecting edges are never radial, so they are always transversal to radial edges. Besides that, note that layer-connecting edges always start and end on the boundary of a $260\degrees/14$ disk sector of the construction. So they could only be completely identical to another layer-connecting edge or intersect them transversally (if at all). For the sake of contradiction, assume that any layer-connecting edge would coincide with another layer-connecting edge. This would imply that the layer produced by them must be the same, including the incoming edges. Therefore, layers would repeat. This would contradict the phenomenon described in lemma \ref{thm:xlessthanone}, namely that the radius of the layers must converge to zero.
\end{itemize*}
As established previously, everything depends smoothly on the deviation $\varphi$. Therefore, any small deviation will maintain transversality where it already is given. Deviation will, however, have to make sure that we split up suspension edges.

\begin{figure*}
	\centering
	\includegraphics[width=0.9\textwidth]{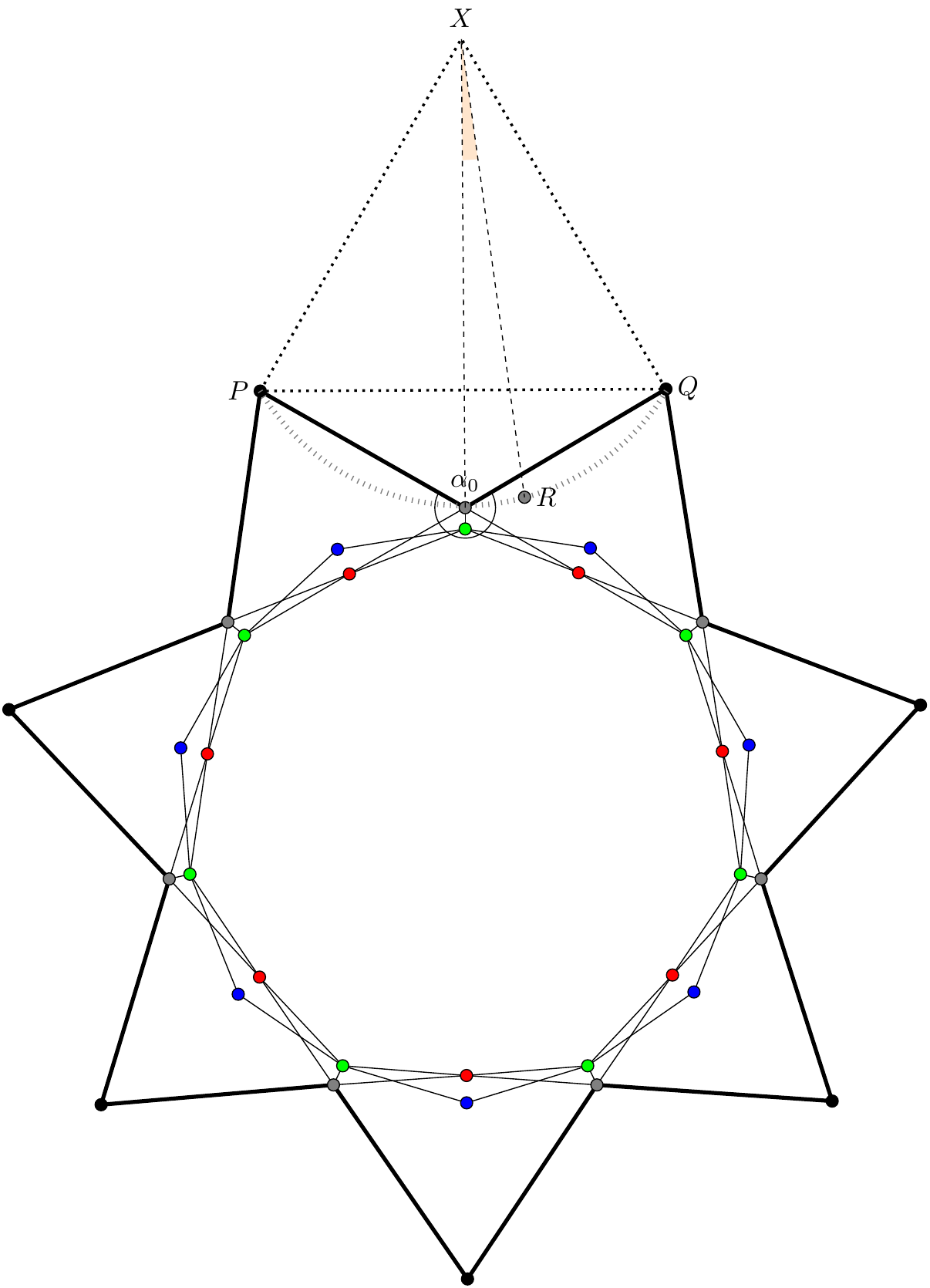}
	\caption{Note the dotted equilateral triangle $XPQ$ as well as the unique circular arc given by $P$, $R$ and $Q$. Denote by $\varphi$ the angle $OXR$ (where $O$ is the origin/center). The symmetric case is given by $\varphi=0$. \textbf{To introduce deviation}, we would slightly increase $\varphi$, and get a new position for $R$. This does not change the value of $\alpha_0$ (since we go along the circular arc). The same deviation is done at \emph{all} seven grey points. This means that the star remains rotationally symmetric under a rotation by $360\degrees/7$, but loses its symmetry under reflections.}
	\label{fig:stardeviate}
\end{figure*}

\subsection{Edges under deviation}

As just established above, we only need to be concerned with the edges produced using the method of \emph{suspension}. This section serves to support the following claim:

\textit{Whereas for $\varphi=0$, the geodesic nets $G_i(0)$ are highly symmetrical and many suspension edges overlap, for nonzero $\varphi\in(-\epsilon,\epsilon)$, all suspension edges of $G_i(\varphi)$ intersect transversally (if at all).}

\subsubsection{The sequence of suspension angles $\varphi_i$}

To demonstrate this, we will study the sequence of suspension angles, which is defined as follows, based on the two types of suspension that we employ:

\begin{mdframed}[style=greybox]
\begin{minipage}{0.7\textwidth}
\setlength{\parskip}{0.5em}
\begin{definition}[Suspension angle for one-hook suspensions, layers $V_i$ for $i$ odd]\label{def:varphiodd}
	Whenever we do a one-hook suspension for a layer $V_i$, we are connecting a vertex $v$ to the closest vertex on the outer circle.
	
	Denote the center of the outer circle by $O$ and the hook by $P$. Then we define the suspension angle $\varphi_i$ to be the angle $\angle OPv$. For clarification, consider the figure on the right giving a positive suspension angle.
	
	Note that $\varphi_i$ depends only on the deviation $\varphi$ (which is the only free parameter of our construction) and that $\varphi_i(0)=0$ for all layers.
	
	We can define this suspension angle for all odd layers, even though in some cases we don't need to suspend a vertex (if case A occurs).
\end{definition}
\end{minipage}\hfill
\begin{minipage}{0.25\textwidth}
	\raggedleft
	\includegraphics[width=\textwidth]{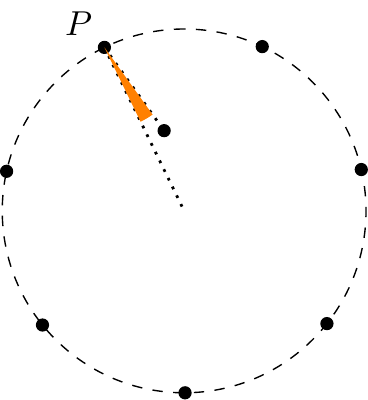}
\end{minipage}
\end{mdframed}

\vspace{3mm}
\begin{mdframed}[style=greybox]
\begin{minipage}{0.7\textwidth}
\setlength{\parskip}{0.5em}
\begin{definition}[Suspension angle for two-hook suspensions, layers $V_i$ for $i$ even]\label{def:varphieven}
	Whenever we do a two-hook suspension for a layer $V_i$, we are connecting a vertex $v$ to the closest two vertices on the outer circle $P$ and $Q$ through the Fermat point of $\Delta PvQ$, see the figure when defining two-hook suspension above).
	
	As before, we denote by $X$ the third vertex of the equilateral triangle $\Delta PQX$ used for the construction of the Fermat point. Let $O$ be the center of the outer circle. Then we define the suspension angle $\varphi_i$ to be the angle $\angle OXv$. For clarification, consider the figure on the right showing a positive suspension angle.
	
	Note that $\varphi_i$ depends only on the deviation $\varphi$ (which is the only free parameter of our construction) and that $\varphi_i(0)=0$ for all layers. Most importantly, for $i=0$ (the initial layer, aka the inner circle) the suspension angle is $\varphi_0(\varphi)=\varphi$.
	
	We can define this suspension angle for all even layers, even though in some cases we don't need to suspend a vertex.
\end{definition}
\end{minipage}\hfill
\begin{minipage}{0.25\textwidth}
	\raggedleft
	\includegraphics[width=\textwidth]{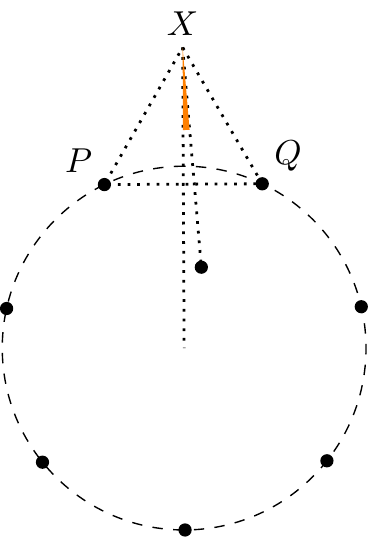}
\end{minipage}
\end{mdframed}

With this definition, we can now make the following observations:
\begin{fact}\label{fact:edgediff}
	Consider $\varphi\in(-\epsilon,\epsilon)$ a geodesic net $G_n(\varphi)$ as constructed above with layers $V_0,\dots,V_n$. Then
	\begin{itemize*}
		\item As established before, only suspension edges could overlap/intersect non-transversally.
		\item For all one-hook suspensions (odd layers), only the edges going from vertices $v_i$, $v_k$ of two different layers to the same hook $P$ could overlap. But as long as $\varphi_i\neq \varphi_k$, they will not do so (this is apparent from the figure in the definition above).
		\item For all two-hook suspensions (even layers), only the edges suspending vertices $v_i$, $v_k$ of two different layers from the same two hooks $P$ and $Q$ could overlap. But as long as $\varphi_i\neq \varphi_k$, they will not do so (see figure \ref{fig:fermatvarphi}).
	\end{itemize*}
\end{fact}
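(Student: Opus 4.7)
The plan is to handle the three bullets in order, reducing each to an elementary statement about when two straight segments with a common endpoint can overlap.

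The first bullet is essentially already in hand. The preceding discussion of $G_i(0)$ showed that layer-connecting edges meet every other edge transversally (they are never radial, they begin and end on the boundary of a $360\degrees/14$ sector, and they cannot coincide with another layer-connecting edge because that would force the layers to repeat, contradicting Lemma~\ref{thm:xlessthanone}). By smooth dependence on $\varphi$, transversality persists for small $\varphi\in(-\epsilon,\epsilon)$, so any candidate non-transversal intersection must involve only suspension edges.

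For the one-hook bullet, I would note that each suspension edge added at an odd layer $V_i$ is a straight segment from a vertex $v_i\in V_i$ to a hook $P$ on the outer circle; by Definition~\ref{def:varphiodd} this segment lies along the line through $P$ making angle $\varphi_i$ with $OP$. Two straight segments sharing the endpoint $P$ can overlap on a nondegenerate interval only if they are collinear through $P$, i.e.\ only if their parameters agree. Rotational symmetry of the outer circle (by $360\degrees/7$) keeps edges attached to different hooks in disjoint angular sectors for small $\varphi$, ruling out cross-hook overlaps. Hence the only candidates are pairs sharing a hook $P$, and these overlap iff $\varphi_i=\varphi_k$.

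For the two-hook bullet, I would unpack the three edges added in each suspension: $PF_i$, $QF_i$, and $v_iF_i$, where $F_i$ is the Fermat point for the triangle $\Delta Pv_iQ$. The construction of Method~1.2 exhibits $F_i$ as the second intersection of the line $Xv_i$ with the circumcircle of $\Delta PQX$. By Definition~\ref{def:varphieven}, $Xv_i$ makes angle $\varphi_i$ with $XO$, so distinct $\varphi_i,\varphi_k$ give distinct lines through $X$ and hence distinct Fermat points $F_i\neq F_k$ on the circle. Then the edges $v_iF_i$ and $v_kF_k$ are segments of the two distinct lines through $X$, so they cannot overlap; and the edges $PF_i$, $PF_k$ are two distinct chords of the circumcircle emanating from $P$, so they meet only at $P$ (and analogously for $QF_i$, $QF_k$). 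Finally, rotational symmetry again confines the suspension edges associated with different hook-pairs $(P,Q)$ to different angular sectors, ruling out overlaps between suspensions at disjoint pairs.

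The only real obstacle is a bookkeeping one rather than a geometric one: for each type of suspension, identify the single real parameter ($\varphi_i$) that controls collinearity of the common-endpoint segments, verify via the circumcircle picture that distinct parameters force distinct Fermat points (and hence geometrically distinct chords), and use rotational symmetry together with smooth dependence on $\varphi$ to discard all cross-hook and cross-pair cases. Once this dictionary is set up, the claim reduces to the tautology that two segments with a common endpoint overlap iff their directions coincide.
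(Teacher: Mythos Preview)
Your proposal is correct and follows essentially the same approach as the paper; the paper treats this Fact as self-evident from the figures (the one-hook definition figure and Figure~\ref{fig:fermatvarphi}), and your argument simply makes explicit the geometry those figures are meant to convey---that the suspension angle $\varphi_i$ determines the direction of the segment through the shared endpoint ($P$ in the one-hook case, $X$ and hence $F_i$ on the circumcircle in the two-hook case), so distinct $\varphi_i$ force distinct directions and preclude overlap. Your added remarks ruling out cross-hook and cross-pair overlaps via rotational symmetry are a reasonable elaboration of what the paper asserts without justification in the phrase ``only the edges \ldots\ to the same hook(s) could overlap.''
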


\begin{figure}[b]
  \centering
  \includegraphics[width=\textwidth]{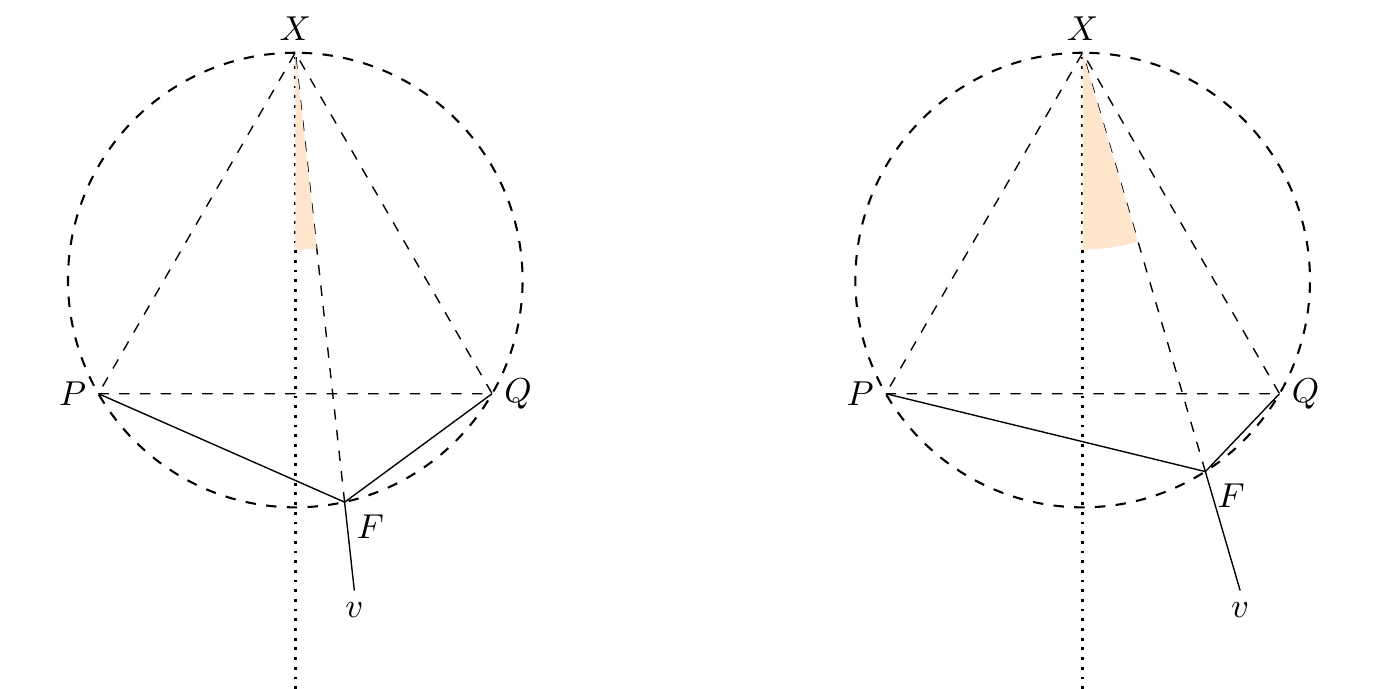}
  \caption{Construction of the Fermat point $F$ to suspend $v_i\in V_i$ from $P$ and $Q$. $\varphi_i$ is the angle between the axis of symmetry of $PQX$ and the segment $vX$. Observe: Whenever $\varphi_i$ (the marked suspension angle) is different, the segment $PF$ is at a different angle. The same is true for $QF$ and $vF$. So we only need to establish that $\varphi_i$ is different at every layer and this implies that none of the suspension edges overlap.}
  \label{fig:fermatvarphi}
\end{figure}

Note the following important observation:

\begin{fact}\label{fact:finitelayers}
	$G_n(\varphi)$ consists of finitely many layers, therefore $\varphi_0,\varphi_1,\dots,\varphi_n$ is a \emph{finite} sequence.	
\end{fact}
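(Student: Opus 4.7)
The plan is simply to unpack the construction and the definitions of $\varphi_i$, so this amounts to an almost immediate observation rather than a substantive proof. I would first recall that in the construction of $G_n(\varphi)$, the parameter $n$ is by assumption a fixed positive integer: one starts with the outer circle $V_{-1}$ and the inner circle $V_0$, and then iteratively adds exactly $n$ further layers $V_1, V_2, \ldots, V_n$ via the winging/suspension procedure described in the iterative process. Thus the collection of layers $\{V_{-1}, V_0, V_1, \ldots, V_n\}$ is finite by construction, with cardinality $n+2$.

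Next I would invoke Definition \ref{def:varphiodd} and Definition \ref{def:varphieven}. These attach a suspension angle $\varphi_i$ to every layer $V_i$ with $i \geq 0$, distinguishing the two cases by the parity of $i$. Both definitions explicitly remark that $\varphi_i$ is well-defined regardless of whether suspension is actually performed on $V_i$ (i.e.\ even when Case A of the iterative process occurs and no suspension edge is added). Consequently, for every index $i \in \{0, 1, \ldots, n\}$, the quantity $\varphi_i$ is an unambiguously defined angle depending only on the global deviation $\varphi$.

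Putting these two observations together, the sequence $\varphi_0, \varphi_1, \ldots, \varphi_n$ has precisely $n+1$ terms, and is therefore finite. The only mild subtlety to flag is the well-definedness of $\varphi_i$ on layers where no actual suspension takes place, and this is already handled by the parenthetical clauses in the two definitions; there is no geometric input beyond what has already been established. Hence no real obstacle arises, and the statement follows directly from the construction.
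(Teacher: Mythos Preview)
Your proposal is correct and matches the paper's treatment: the paper states this as a \emph{Fact} without proof, since it is indeed immediate from the construction that $G_n(\varphi)$ has layers $V_{-1},V_0,\ldots,V_n$ for a fixed integer $n$, and the $\varphi_i$ are defined for $i=0,\ldots,n$. Your unpacking of Definitions~\ref{def:varphiodd} and~\ref{def:varphieven} to confirm well-definedness even when no suspension occurs is the only point worth noting, and you have handled it correctly.
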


Based on definitions \ref{def:varphiodd}, \ref{def:varphieven} and fact \ref{fact:edgediff}, we arrive at the following lemma:

\begin{lemma}\label{def:varphimustbedifferent}
	If for any fixed $\varphi\in(-\epsilon,\epsilon)$, the sequence $\varphi_i(\varphi)$ never repeats itself, all edges of the resulting geodesic net $G_n(\varphi)$ intersect transversally (if at all). In other words, there are no edges with weight other than one.
\end{lemma}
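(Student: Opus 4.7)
The plan is to combine the edge classification from the preceding subsection with Fact \ref{fact:edgediff} and reduce the entire question to the non-coincidence of suspension angles.

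First, I would invoke the analysis of the non-deviated construction showing that in $G_n(0)$ the only edges capable of meeting non-transversally are suspension edges, while layer-connecting edges can at worst cross transversally. By the smooth-dependence lemma proved earlier in Section \ref{sect:thestar}, this transversality survives for $\varphi\in(-\epsilon,\epsilon)$ sufficiently small, since transversality is an open condition. Consequently, any possible non-transverse intersection in $G_n(\varphi)$ must involve two suspension edges, and by Fact \ref{fact:edgediff} such a pair can overlap only if the two edges share their single hook (odd-layer, one-hook case) or their common ordered pair of hooks (even-layer, two-hook case).

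Next I would handle each of the two cases separately. In the one-hook case the suspension edge is the segment $Pv_i$, and its direction at the fixed hook $P$ is by Definition \ref{def:varphiodd} exactly the angle $\varphi_i=\angle OPv_i$. Hence whenever $\varphi_i\neq\varphi_k$ the two segments leave $P$ in different directions and can share at most the endpoint $P$ itself, not a positive-length subsegment. In the two-hook case the three added edges are $PF_i$, $QF_i$ and $F_iv_i$, where the Fermat point $F_i$ of $\triangle Pv_iQ$ is, by the construction recipe, the second intersection of the segment $Xv_i$ with the fixed circle through $P$, $Q$, $X$. Thus $F_i$ depends only on the direction of $Xv_i$, i.e. on $\varphi_i=\angle OXv_i$; if $\varphi_i\neq\varphi_k$ then $F_i\neq F_k$, and accordingly the segments $PF_i$ and $PF_k$ leave the common hook $P$ in different directions, and similarly at $Q$ and along the edge to $v_i$. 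This precludes any overlap on a positive-length arc (cf. Figure \ref{fig:fermatvarphi}).

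Combining these two observations, if the finite sequence $\varphi_0,\varphi_1,\dots,\varphi_n$ contains no repetitions, then for every pair of suspension edges attached to a common hook or common hook-pair the relevant angles differ, so no two suspension edges overlap. Together with the transversality of layer-connecting edges carried over from $\varphi=0$, this shows that every edge of $G_n(\varphi)$ is distinct and, where it meets another edge, does so at most transversally, so no multiplicity exceeds one. The only genuinely non-routine step is the geometric observation that $F_i$ is determined by $\varphi_i$ alone (with $P$, $Q$, $X$ fixed), but this is immediate from the explicit construction of the Fermat point given in Method 4.2.2. No subtle obstacle remains; the lemma is essentially a bookkeeping consequence of the facts already assembled.
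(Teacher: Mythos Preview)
Your proposal is correct and follows essentially the same route as the paper: the paper presents this lemma as an immediate consequence of the edge classification and Fact~\ref{fact:edgediff} (which already records that only suspension edges can overlap and that distinct suspension angles preclude this), and you have simply written out in full the geometric reasoning that the paper leaves to the phrase ``apparent from the figure.'' Your extra observation that the Fermat point $F_i$ is determined by $\varphi_i$ alone (since $F_i$ is the second intersection of the line $Xv_i$ with the fixed circle through $P,Q,X$) is exactly the content the paper encodes in Figure~\ref{fig:fermatvarphi}.
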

Note that, in fact it would be enough if the $\varphi_i$ are different for the same parity (since even and odd layers never have suspension edges in common).

Based on symmetry (see also figure \ref{fig:star100}), we can observe

\begin{fact}
	$\varphi_i(0)=0$ for all $i$.
\end{fact}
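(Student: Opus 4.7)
The plan is to prove this by induction on $i$, leveraging the fact that the $\varphi=0$ construction has the full dihedral symmetry $D_7$ of the outer $7$-gon, not merely the rotational $\mathbb{Z}/7$ symmetry emphasized in the paper. The group $D_7$ has $14$ reflection axes through the center $O$: I will call an axis a \emph{vertex axis} if it passes through an outer vertex $P$, and a \emph{midpoint axis} if it passes through an apex $X$ of the equilateral triangle on an adjacent pair $PQ$ of outer vertices. A vertex axis swaps the two outer neighbours of $P$, while a midpoint axis swaps $P$ and $Q$. The fact will follow from the stronger claim that, at $\varphi = 0$, each vertex of $V_i$ lies on a midpoint axis when $i$ is even and on a vertex axis when $i$ is odd.

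The base case is immediate: by construction the inner circle $V_0$ at $\varphi=0$ consists of seven points placed on the segments $OX$, i.e.\ on midpoint axes, so each such $v$ is collinear with $O$ and with the corresponding $X$, giving $\varphi_0(0)=0$. For the inductive step, assume that the configuration through layer $V_i$ is $D_7$-invariant and that each vertex of $V_i$ lies on its appropriate axis. The suspension step (when applied) uses either the single closest outer vertex $P$ or the two closest outer vertices $P,Q$ as hooks; in either case these hooks are chosen symmetrically with respect to the axis through $v$, so the whole picture remains symmetric under reflection about that axis. This already forces $\varphi_i(0)=0$, since it makes $v$ collinear with $O$ and with the reference point ($P$ for odd $i$, $X$ for even $i$). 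The winging step then adds two new edges which must be symmetric about the axis through $v$ by the balancing condition (the imbalance vector before winging lies along this axis), so $D_7$-invariance is preserved.

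Finally, the vertices of $V_{i+1}$ are obtained as the seven intersections of wings from adjacent vertices of $V_i$. The reflection that swaps two adjacent vertices of $V_i$ (whose axes are adjacent axes of the same type) sends one wing to the other and therefore fixes their intersection point; hence the intersection lies on the fixed set of that reflection, which is precisely the axis midway between the two adjacent axes. Since the midway axis between two adjacent midpoint axes is a vertex axis and vice versa, the parity toggles correctly and the induction closes. The only bookkeeping one really needs is that the wings from adjacent vertices do intersect strictly between the two axes (neither at $O$ nor missing each other); this is guaranteed by the angle bounds of Lemma~\ref{thm:anglecalc} and the radius estimates of Lemma~\ref{thm:xlessthanone}. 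No numerical computation is required beyond what was done in those two lemmas, so the main step is simply to keep track of which reflection axis each layer sits on.
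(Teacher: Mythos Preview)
Your argument is correct and matches the paper's approach: the paper simply asserts the fact ``based on symmetry (see also figure~\ref{fig:star100})'' without spelling out any details, so your inductive symmetry argument is in fact a fuller justification of what the paper leaves implicit. One small slip: $D_7$ has only $7$ reflection axes, not $14$; since $7$ is odd, each axis passes through one outer vertex \emph{and} the midpoint of the opposite side (hence through one apex $X$), so your ``vertex axes'' and ``midpoint axes'' are the same $7$ lines. What you really want to track are the $14$ \emph{rays} from $O$ (alternating between vertex-directions and apex-directions), and your parity-toggling argument goes through verbatim once phrased that way.
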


Also, since we established smooth dependence of the construction on the deviation $\varphi$ before and since this is in fact the only free parameter, we can consider the derivative of $\varphi_i(\varphi)$. We make the following conjecture:

\begin{conjecture}\label{conj:varphiconjecture}
	$\varphi_i'(0)$ is a sequence that never repeats itself.
\end{conjecture}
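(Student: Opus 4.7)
The plan is to reduce the conjecture to a concrete algebraic statement about a linearized recurrence and then attack that statement using transcendence. Since the entire construction depends smoothly on $\varphi$, I would expand each geometric quantity to first order: $\varphi_i(\varphi) = \psi_i \varphi + O(\varphi^2)$, and similarly expand vertex positions and angle measurements around the symmetric configuration. Because $\varphi_i(0) = 0$, the quantities $\psi_i := \varphi_i'(0)$ are exactly the first-order coefficients I need to control, and they carry all the information about whether two suspension edges can come to lie along the same line under deviation.

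First I would derive an explicit recurrence for $\psi_{i+1}$ in terms of $\psi_i$ (and in some cases $\psi_{i-1}$) by differentiating the geometric operations used to pass from $V_i$ to $V_{i+1}$. In Case A (pure winging of a degree-$2$ vertex) the outgoing wings are extensions of the incoming edges, so the perturbation of $V_{i+1}$ is determined by perturbations of $V_i$ and $V_{i-1}$ through a trigonometric formula involving $\alpha_i$. In Case B one differentiates the Fermat-point construction (for $i$ even) or the hook-to-vertex segment (for $i$ odd); here $\psi_i$ enters naturally because it is literally the angle being perturbed. The output is a linear recurrence of the form $\psi_{i+1} = A_i \psi_i + B_i \psi_{i-1} + C_i$, where $A_i, B_i, C_i$ are explicit trigonometric expressions in $\alpha_j, \beta_j, x_j$ for $j \le i$, all of which depend deterministically on $\alpha_0$ alone.

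Given such a recurrence, I would attempt to rule out repetitions by transcendence, following the strategy of Lemma \ref{thm:anglecalc}. Since $\alpha_0 = 88/21$ is algebraic in radians, Lindemann--Weierstrass guarantees that $\cos\alpha_0$ and $\sin\alpha_0$ are transcendental, and the inductive argument of that lemma shows that each $\cos\alpha_i$ is transcendental and lies in the finitely generated extension $K = \mathbb{Q}(\cos(2\pi/7), \sin(2\pi/7), \cos\alpha_0, \sin\alpha_0)$, which has transcendence degree one over the cyclotomic field $\mathbb{Q}(\cos(2\pi/7),\sin(2\pi/7))$. The $\psi_i$ live in a controlled tower of quadratic extensions of $K$. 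An equality $\psi_j = \psi_k$ translates into a polynomial relation in $\cos\alpha_0$ with coefficients in the cyclotomic base; if one can track the degree and leading coefficient of that polynomial along the recurrence and show that it is a nonzero polynomial, then its vanishing at the transcendental value $\cos\alpha_0$ is impossible.

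The main obstacle, and the reason this is stated as a conjecture rather than a theorem, is executing the last step. The pattern of Case A versus Case B along the sequence is driven by the $\alpha_i$ themselves and is not periodic, and the square roots introduced at each winging step make degree bookkeeping delicate. If the transcendence route proves intractable, my fallback would be a hybrid asymptotic-plus-numerical approach: use the geometric decay of $x_i$ from Lemma \ref{thm:xlessthanone} to show that $|\psi_i|$ (or $|\psi_{i+j}-\psi_i|$ along suitable subsequences governed by the $\ell = 8,9$ loop in that lemma) has controlled asymptotics, so that only finitely many pairs $(j,k)$ could give equal values, and then verify those pairs by direct computation as the authors have already done for $i \le 100$.
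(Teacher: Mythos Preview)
The statement you are trying to prove is stated in the paper as a \emph{conjecture}, not a theorem; the authors explicitly say the proof ``eludes us.'' What the paper actually does is (i) derive a fully explicit coupled linear recurrence for $\varphi_i'(0)$ together with an auxiliary angle derivative $\psi_i'(0)$ (their $\psi$, not yours), with coefficients $a_i,b_i,c_i$ depending on $\alpha_i$ and $x_i$; and (ii) verify numerically that the first hundred values of $\varphi_i'(0)$ are pairwise distinct, with the minimum gap about $3.74$ and apparent exponential growth. Your first step, linearizing the construction to obtain a recurrence, is therefore exactly what the paper carries out in its appendix, and your fallback plan is essentially the paper's Section~4.8. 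So on those fronts you are not proposing anything beyond what the authors already have.

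Your genuinely new contribution is the transcendence strategy, and it is a reasonable instinct given Lemma~\ref{thm:anglecalc}, but there is a concrete gap. The coefficients $a_i,b_i$ of the recurrence depend on $\sigma_{i+1}$, which is an $\arctan$ of a rational expression in $x_i$; the $x_i$ in turn are products $\prod_{j<i} f(\beta_j)$ of trigonometric ratios. Thus the field containing $\varphi_i'(0)$ is not a finite-degree extension of your $K=\mathbb{Q}(\cos(2\pi/7),\sin(2\pi/7),\cos\alpha_0,\sin\alpha_0)$; each step adjoins new algebraic elements (through the square roots implicit in $\sin$, $\arccos$, $\arctan$), and the tower has unbounded degree over $K$. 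An equality $\varphi_j'(0)=\varphi_k'(0)$ therefore becomes a relation in this growing tower, not a single polynomial in $\cos\alpha_0$ over the cyclotomic base, and the transcendence-degree-one argument does not by itself force such a relation to be trivial. You acknowledge this difficulty, but without a concrete mechanism to control the tower (for instance, a Galois-theoretic argument that the successive quadratic extensions are genuinely independent), the proposal remains a strategy rather than a proof. Also, a small correction: the recurrence is homogeneous linear in the derivatives, so your inhomogeneous term $C_i$ should vanish.
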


Keeping in mind that $\varphi_i$ is a finite sequence, the previous fact and conjecture (i.e. same value at $0$, but different derivatives) would then imply the following:

\begin{conjecture}
	For small nonzero $\varphi\in(-\epsilon,\epsilon)$, the sequence $\varphi_i(\varphi)$ never repeats itself. This implies that $G_n(\varphi)$ is a geodesic net for which all edges have weight one.
\end{conjecture}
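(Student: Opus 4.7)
The plan is to derive the conclusion from Conjecture \ref{conj:varphiconjecture} (which we take as a hypothesis) together with the smooth dependence lemma, Fact \ref{fact:finitelayers}, and Lemma \ref{def:varphimustbedifferent}. The argument reduces to a first-order Taylor expansion in the deviation parameter.

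First, recall that by the smooth dependence of the whole construction on $\varphi$, each suspension angle $\varphi_i(\varphi)$ is a smooth function of $\varphi$ on $(-\epsilon,\epsilon)$. By the symmetry observation preceding the conjecture, $\varphi_i(0)=0$ for every $i$. Applying Taylor's theorem at $0$, we may write
\begin{equation*}
\varphi_i(\varphi) = \varphi_i'(0)\,\varphi + R_i(\varphi)\,\varphi^2,
\end{equation*}
where $R_i$ is continuous on a neighbourhood of $0$. For any pair $i\neq j$ this gives
\begin{equation*}
\varphi_i(\varphi)-\varphi_j(\varphi) = \bigl(\varphi_i'(0)-\varphi_j'(0)\bigr)\,\varphi + \bigl(R_i(\varphi)-R_j(\varphi)\bigr)\,\varphi^2.
\end{equation*}

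Next, I would invoke Conjecture \ref{conj:varphiconjecture}, which asserts that the numbers $\varphi_0'(0),\varphi_1'(0),\dots,\varphi_n'(0)$ are pairwise distinct. Set $\delta := \min_{i\neq j}|\varphi_i'(0)-\varphi_j'(0)|>0$, and let $M := \max_{i\neq j}\sup_{|\varphi|\le\epsilon/2}|R_i(\varphi)-R_j(\varphi)|$, which is finite by Fact \ref{fact:finitelayers} (only finitely many pairs) and the continuity of the $R_i$. Choose any $\epsilon'\in(0,\epsilon)$ with $\epsilon'<\delta/(2M)$ (taking $\epsilon'<\epsilon/2$ also). Then for every $\varphi\in(-\epsilon',\epsilon')\setminus\{0\}$ and every $i\neq j$,
\begin{equation*}
|\varphi_i(\varphi)-\varphi_j(\varphi)| \geq |\varphi|\bigl(\delta - M|\varphi|\bigr) \geq |\varphi|\cdot\tfrac{\delta}{2} > 0.
\end{equation*}
Hence the finite sequence $\varphi_0(\varphi),\dots,\varphi_n(\varphi)$ has no repetitions for such $\varphi$.

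Finally, I would apply Lemma \ref{def:varphimustbedifferent}: since the suspension angles at distinct layers are all distinct, none of the suspension edges overlap, and combined with the already-established fact that layer-connecting edges are transversal to everything relevant (established when analyzing the non-deviated construction, together with smooth dependence preserving transversality under small $\varphi$), every edge of $G_n(\varphi)$ either misses the others or meets them transversally. This is precisely the statement that $G_n(\varphi)$ is a genuine geodesic net with all edge weights equal to one.

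The only genuine obstacle in this plan is Conjecture \ref{conj:varphiconjecture} itself: the recursive formulas defining $\varphi_i'(0)$ are complicated enough that proving no two of them ever coincide appears difficult and is precisely the point the authors flag as unresolved. Everything else — Taylor expansion, uniform control of the remainder over finitely many pairs, and the passage from distinct $\varphi_i$ to transversal suspension edges via Lemma \ref{def:varphimustbedifferent} — is routine once that input is granted.
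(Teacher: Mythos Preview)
Your proposal is correct and matches the paper's approach exactly. The paper does not give a detailed proof of this conjecture; it simply states that the previous fact ($\varphi_i(0)=0$), Conjecture \ref{conj:varphiconjecture} (distinct derivatives), and the finiteness from Fact \ref{fact:finitelayers} together imply the result --- you have faithfully filled in the routine Taylor-expansion details of that implication and correctly invoked Lemma \ref{def:varphimustbedifferent} for the final step, while also correctly flagging that the genuine obstacle is Conjecture \ref{conj:varphiconjecture} itself.
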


So this $G_n(\varphi)$ would in fact fulfill all required conditions:

\begin{enumerate*}
	\item There are $14$ unbalanced vertices.
	\item For $n$ large enough, we can achieve an arbitrarily large number of balanced vertices.
	\item All edges have weight one.
\end{enumerate*}

\subsection{Studying the sequence $\varphi_i'(0)$}

So we are left with conjecture \ref{conj:varphiconjecture}. For the remainder, we will consider statements we can make about the sequence $\varphi_i'(0)$.

We will do the following:
\begin{itemize*}
	\item We will provide an explicit recursive formula for $\varphi_i'(0)$.
	\item We will present numerical results that strongly suggest that conjecture \ref{conj:varphiconjecture} is true.
\end{itemize*}

\subsection{An explicit formula for $\varphi_i'(0)$}

In the following, all derivatives will be with respect to $\varphi$. First note, that $\varphi_0=\varphi$ and therefore obviously $\varphi_0'(0)=1$. We will now find a recursive formula for $\varphi_i'(0)$. The crucial question is how $\varphi_{i+1}'(0)$ depends on $\varphi_i'(0)$.

\begin{figure*}[b]
	\centering
	\includegraphics[width=0.45\textwidth]{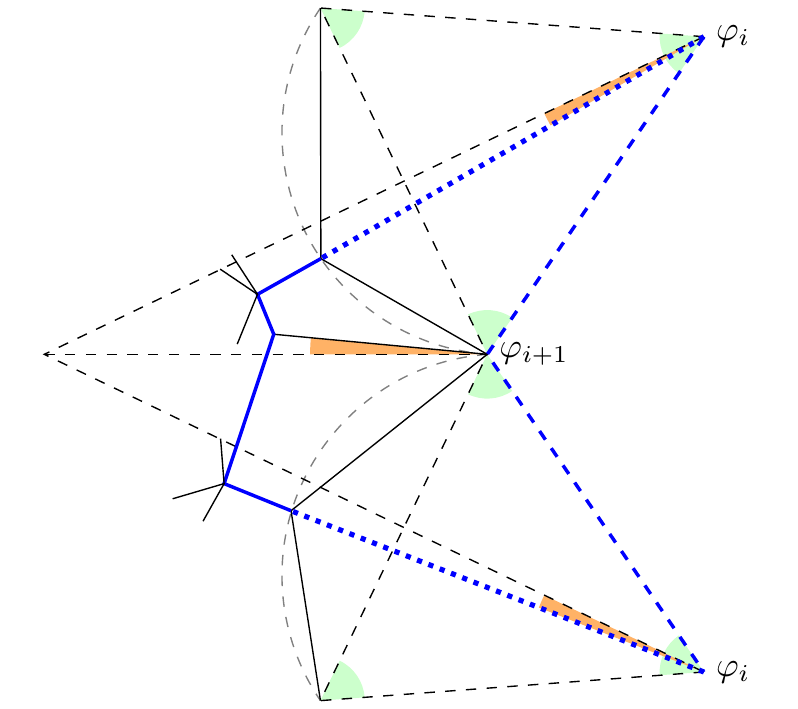}
	\includegraphics[width=0.45\textwidth]{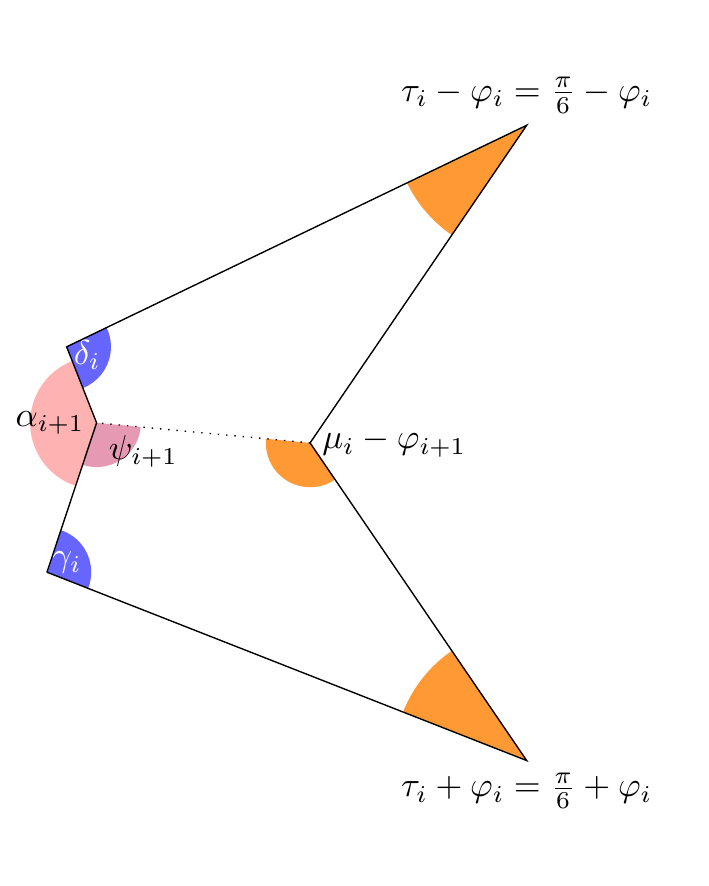}
	
	\includegraphics[width=0.45\textwidth]{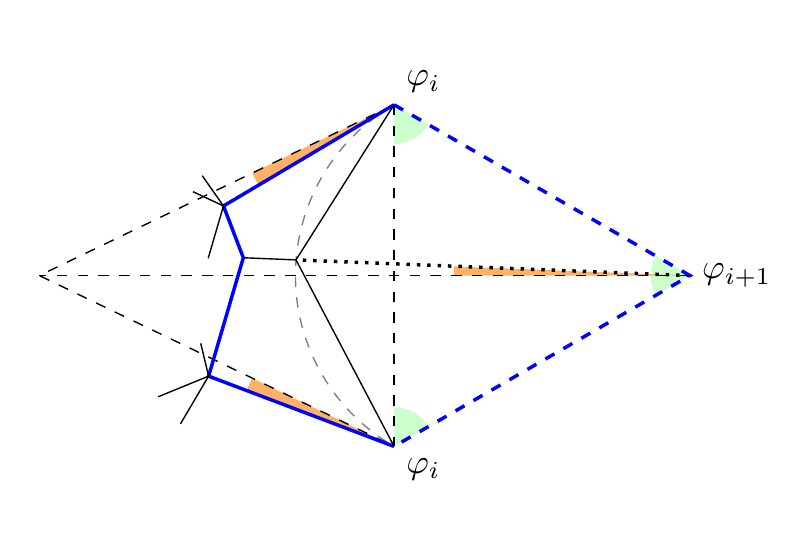}
	\includegraphics[width=0.45\textwidth]{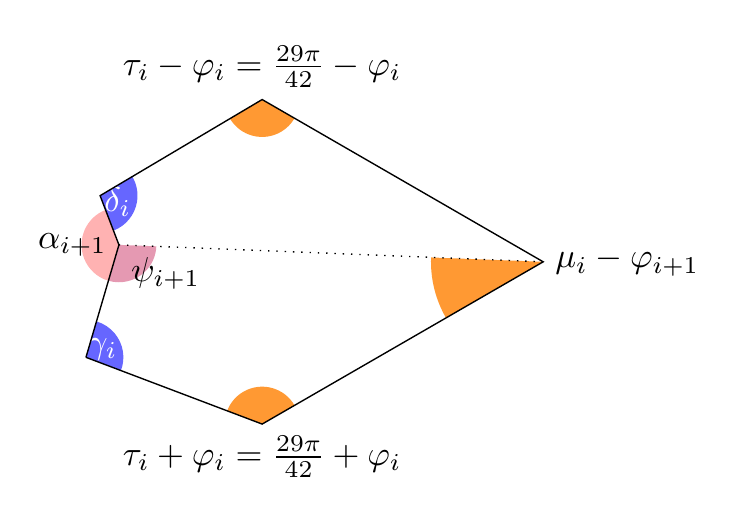}	
	
	\caption{We want to relate $\varphi_{i+1}'$ to $\varphi_i'$. The upper pictures are considering the case $i$ even, the lower pictures are considering the case $i$ odd. All green angles are $60\degrees$. $\mu$ is a constant angle. All dashed lines are stationary, all dotted and solid lines vary over a change of the deviation $\varphi$. The picture on the right extracts the blue hexagon out of the left picture. Note that in this example, $\varphi_i$ is positive whereas $\varphi_{i+1}$ is negative}
	\label{fig:varphistep}	
\end{figure*}

Figure \ref{fig:varphistep} shows the two cases for $i$ even and $i$ odd. Note that if $\varphi=0$, then the picture is symmetric along a horizontal reflection. In both cases, we get a hexagon made of two quadrilaterals.

In either case, by the angle sum in the lower quadrilateral:
\begin{align*}
	\tau_i+\varphi_i+\gamma_i+\psi_{i+1}+\left(\mu_{i}-\varphi_{i+1}\right)=360\degrees
\end{align*}
Note that $\tau_i$ and $\mu_i$ are constants that don't change under $\varphi$. Therefore, differentiation by $\varphi$ leads to:
\begin{align*}
	\varphi_i'+\gamma_i'+\psi_{i+1}'-\varphi_{i+1}'=0\Rightarrow \varphi_{i+1}'=\varphi_i'+\gamma_i'+\psi_{i+1}'
\end{align*}

In the appendix, we will establish the following relationships at $\varphi=0$.
\begin{align*}
	\varphi_0'&=1
	\qquad\qquad 
	\psi_0'=\frac12-\sin(\pi/6-\alpha_0)\\[2mm]
	\varphi_{i+1}'&=\varphi_i'+\gamma_i'+\psi_{i+1}'\qquad\qquad
	\psi_{i+1}'=b_i\gamma_i'+a_i\varphi_i'\qquad\qquad
	\gamma_i'=c_i\psi_i'
\end{align*}
The formulas for the coefficients make use of $x_i$ and $\alpha_i$, which were defined previously:
\begin{align*}
	a_i&=-\tan\frac{\alpha_{i+1}}{2}\left(\frac{\sin(\pi/7+\sigma_{i+1})\sin(\pi/7+\sigma_{i+1}+\tau_i)}{\sin\tau_i}+\frac{\frac12
	-\frac{\sin(\tau_i+2\pi/7+2\sigma_{i+1})}{2\sin\tau_i}}{\tan(\frac{\alpha_{i+1}}{2}-\sigma_{i+1})}\right)\\[2mm]
	b_i&=-\frac{\tan\alpha_{i+1}/2}{\tan(\alpha_{i+1}/2-\sigma_{i+1})}
	\qquad\qquad
	c_i=\begin{cases}
	-1&\alpha_i>180\degrees\\
	\displaystyle\frac{2\cos\alpha_i/2}{1-2\cos\alpha_i/2}&\alpha_i<180\degrees
\end{cases}\\[2mm]
	\tau_i&=\begin{cases}
		\pi/6&i\text{ even}\\
		29\pi/42&i\text{ odd}\\
	\end{cases}\qquad\qquad
	\sigma_{i+1}=\begin{cases}\displaystyle
		\arctan\frac{\sin\pi/7}{\frac{\sin\alpha_0/2}{x_i\sin(\pi/7+\alpha_0/2)}-\cos\pi/7}&i\text{ even}\\
		\displaystyle
		\arctan\frac{\sin\pi/7}{\frac{\sin\alpha_0/2}{x_i\sin(\pi/7+\alpha_0/2)}\frac{\sin(\pi/7+\pi/6)}{\sin\pi/6}-\cos\pi/7}&i\text{ odd}
	\end{cases}
\end{align*}

\subsection{Numerical consideration of the sequence $\varphi_i'(0)$}

While the formulas above are all explicit, they are arguably not very \enquote{handy} which makes understanding their behaviour a challenging task. Recall that all we need is that $\varphi_i'(0)$ never repeats. This would then imply that a small deviation from $G_n(0)$ to $G_n(\varphi)$ would in fact split up all edges as required.

For a better understanding, we used \textsc{MATLAB} to compute the first items of the sequence. Figure \ref{fig:phiplot} shows the first 100 elements of the sequence $\varphi_i'(0)$, on a logarithmic scale. These calculations lead to the following observations, which in turn support conjecture \ref{conj:varphiconjecture}, saying that $\varphi_i'(0)$ doesn't repeat:
\begin{itemize*}
	\item The magnitude of the sequence grows exponentially.
	\item The sequence seems to be generally increasing (i.e. increasing with a small number of exceptions)
	\item Since it is enough if the sequence differs for all even $i$ and for all odd $i$, we get additional \enquote{leeway}.
\end{itemize*}

In fact, numerical evidence suggests that the first 100 elements of the sequence do not repeat. We computed the first 100 elements of $\varphi'_i(0)$ with MATLAB using variable precision arithmetic, using between 10 and 100 significant digits. The following result remained stable under variable precision:
\begin{align*}
	\min_{i\neq j}|\varphi'_i(0)-\varphi'_j(0)|\approx 3.743673268
\end{align*}
This minimum is realized by $\varphi'_0(0)$ and $\varphi'_2(0)$.

\begin{figure}
  \centering
  \includegraphics{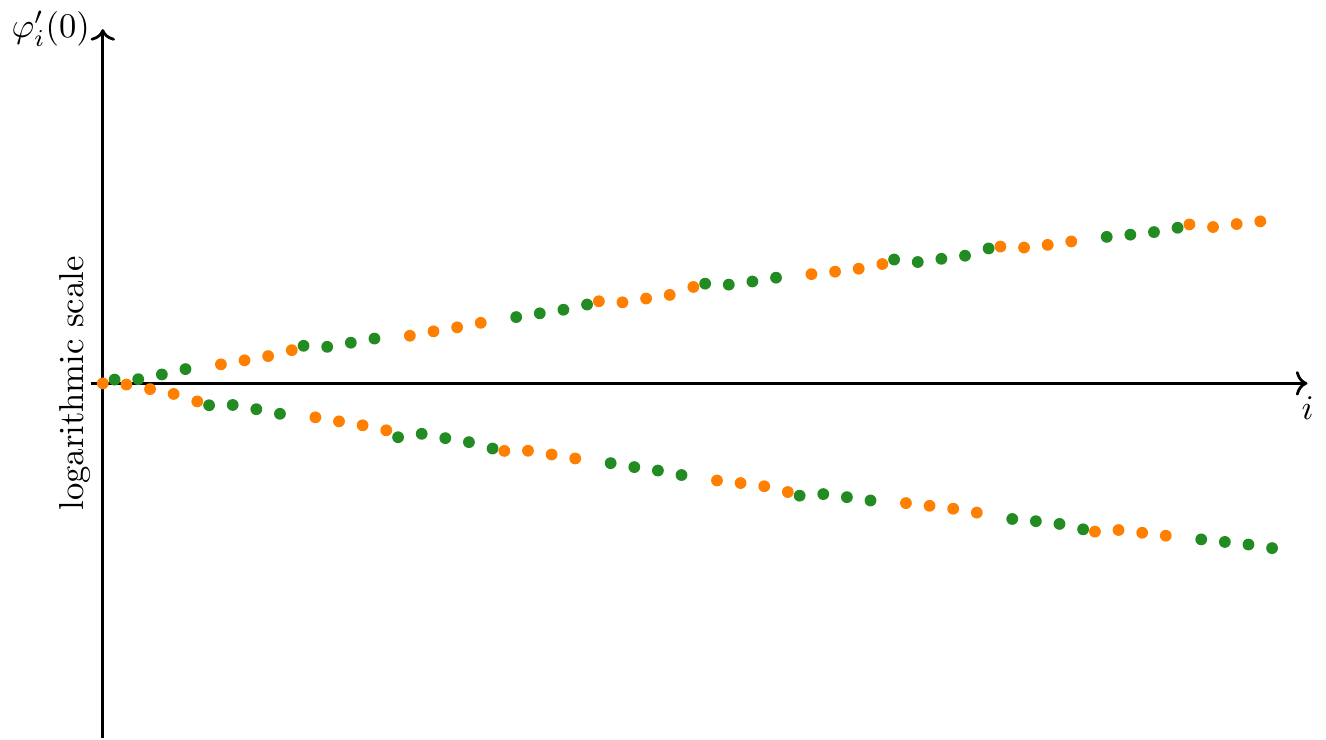}
  \caption{The first 100 values of $\varphi_i'(0)$ on a \emph{logarithmic} scale, calculated with $\alpha_0=88/21$. All points for $i$ even are marked in orange. All points for $i$ odd are marked in green.}
  \label{fig:phiplot}
\end{figure}

\section{Appendix: Finding the formulas for the sequence $\varphi_i'(0)$}

Consider figure \ref{fig:varphistep} and the following formula we derived previously:
\begin{align*}
	\varphi_{i+1}'=\varphi_i'+\gamma_i'+\psi_{i+1}'
\end{align*}
In this appendix, we intend to do the following:
\begin{itemize*}
	\item Find the starting values of $\varphi_0'(0)$ and $\psi_0'(0)$.
	\item Establish that $\alpha_i'(0)=0$ for all $i$.
	\item Derive a formula for $\gamma_i'(0)$ in terms of $\psi_i'(0)$.
	\item Derive a formula for $\psi_{i+1}'(0)$ in terms of $\psi_i'(0)$ and $\varphi_i'(0)$.
\end{itemize*}

\subsection{Starting values}

Since $\varphi$ is defined to be the \emph{suspension angle} for the inner circle, which is the zeroth level (compare definition \ref{def:varphieven} and compare with the initial definition of the inner circle), $\varphi_0=\varphi$ and therefore
\begin{align*}
	\varphi_0'(0)=1
\end{align*}

\begin{figure}
  \centering
  \includegraphics[width=0.6\textwidth]{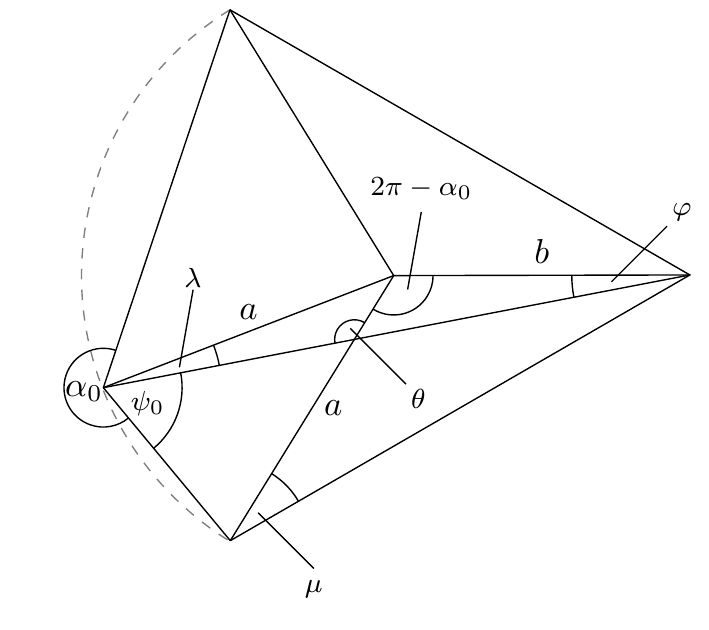}
  \caption{Dependence of $\psi_0$ on $\varphi$}
  \label{fig:psiprimeinitial}
\end{figure}

To find $\psi_0'(0)$, consider figure \ref{fig:psiprimeinitial}, more specifically the isosceles triangle with two sides of length $a$. Using angle sums in triangles:
\begin{align*}
	\psi_0=(\pi-(\pi-\lambda-\theta))/2-\lambda=\theta/2-\lambda/2\ \Rightarrow\ \psi_0'=\theta'/2-\lambda'/2
\end{align*}

\textbf{Finding $\lambda'$.} The law of sines, applied to the two triangles that share the common side of length $b$ but have two different sides of length $a$ gives us
\begin{align*}
	\frac{\sin\lambda}{\sin\varphi}=\frac{b}{a}=\frac{\sin\mu}{\sin\pi/6}=2\sin\mu\ \Rightarrow\ \sin\lambda=2\sin\mu\sin\varphi\ \Rightarrow\ \cos\lambda\ \lambda'=2\sin\mu\cos\varphi
\end{align*}
At $\varphi=0$ we also have $\lambda=0$ and therefore $\lambda'(0)=2\sin\mu=2\sin(\pi-\pi/6-(2\pi-\alpha_0))=2\sin(\pi/6-\alpha_0)$.

\textbf{Finding $\theta'$.} Using the angle sum in triangles, we get $\theta+\mu+\pi/6-\varphi=\pi$ and therefore $\theta'=1$.

\textbf{Combining these} we arrive at $\psi_0'(0)=\frac12-\sin(\pi/6-\alpha_0)$.

\subsection{$\alpha_i'(0)$ and $\gamma_i'(0)$}

Recall that $\alpha_i$ is the interior angle at the vertices $V_i$ of the 14-gon formed by $V_{i-1}$ and $V_i$. It is one of the two angles between the incoming edges at the vertex of a layer, before winging (see also methods \ref{meth:wing2} and \ref{meth:wing3} as well as the formulas in the proof of \ref{thm:anglecalc}). Like all other angles, each $\alpha_i=\alpha_i(\varphi)$ is a smooth function of the deviation angle $\varphi$. Figures \ref{fig:varphistep} and \ref{fig:alphapsigamma} show how $\gamma$ is one of the angles between an outgoing edge and the suspension edge, whereas $\psi$ is one of the angles between an incoming edge and the suspension edge (even if we do not suspend, as is the case for a degree $4$ vertex where $\alpha>180\degrees$, we can still consider the angle compared to a hypothetical suspension edge.

\begin{lemma}\label{lem:alphagammaprops}
	For all $i=0,1,2,\dots$, we have the following relationships at $\varphi=0$
	\begin{enumerate*}
		\item $\alpha_i'=0$
		\item $\gamma_i'=c_i\psi_i'$ where $c_i=\begin{cases}
	-1&\alpha_i>180\degrees\\
	\displaystyle\frac{2\cos\alpha_i/2}{1-2\cos\alpha_i/2}&\alpha_i<180\degrees
\end{cases}$
	\end{enumerate*} 
\end{lemma}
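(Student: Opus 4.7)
The plan is: part (a) follows from the dihedral symmetry of the unperturbed construction, the case $\alpha_i>180\degrees$ of part (b) is a geometric identity, and the case $\alpha_i<180\degrees$ of part (b) comes from differentiating the balancing condition at $v$.

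For part (a), the unperturbed construction ($\varphi=0$) is invariant under the dihedral group $D_7$, while the deviated construction retains only the rotational subgroup $C_7$. Any reflection in $D_7\setminus C_7$ sends the configuration with deviation $+\varphi$ to the configuration with deviation $-\varphi$ (up to a $C_7$-relabelling of vertices). Since unsigned angles are preserved by reflections, $\alpha_i(\varphi)=\alpha_i(-\varphi)$; differentiating at $\varphi=0$ gives $\alpha_i'(0)=0$.

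For part (b) in the case $\alpha_i>180\degrees$, the vertex $v$ has degree $2$ (no suspension was performed) and winging simply extends each incoming edge through $v$. Hence the unit tangent along the $k$-th wing is exactly $-\hat{e}_k$, so the angle $\gamma_i$ between a chosen wing and the hypothetical suspension direction satisfies $\gamma_i=\pi-\psi_i$ as an identity in $\varphi$, not just at $\varphi=0$. Differentiating yields $\gamma_i'(0)=-\psi_i'(0)$, i.e.\ $c_i=-1$.

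For the case $\alpha_i<180\degrees$ of part (b), write the balancing condition at $v$ after suspension and winging as
\[
\hat{e}_1(\varphi)+\hat{e}_2(\varphi)+\hat{s}(\varphi)+\hat{w}_1(\varphi)+\hat{w}_2(\varphi) \;=\; 0\in\mathbb{R}^2,
\]
which is two scalar equations. Parameterize each unit vector by its angle from a fixed reference direction. At $\varphi=0$ the configuration is reflection-symmetric, and the single nontrivial scalar equation is precisely the winging relation $\cos(\beta_i/2)=\frac{1}{2}-\cos(\alpha_i/2)$ appearing in Method \ref{meth:wing3}. Differentiating in $\varphi$ at $\varphi=0$ produces two linear equations in the five angle derivatives; part (a) forces the antisymmetric part of the incoming-edge variation (i.e.\ the variation of $\alpha_i$) to vanish to first order, so the first-order motion of the pair $(\hat{e}_1,\hat{e}_2)$ relative to $\hat{s}$ is a single parameter, which is $\psi_i'(0)$. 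Substituting into the linearized balancing condition and using $\cos(\beta_i/2)=\frac{1}{2}-\cos(\alpha_i/2)$ to evaluate the trigonometric factors, one inverts for the wing variation and obtains $\gamma_i'(0)=c_i\,\psi_i'(0)$ with $c_i=\frac{2\cos(\alpha_i/2)}{1-2\cos(\alpha_i/2)}$.

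The main obstacle is sign-and-convention bookkeeping in Case B: the suspension direction $\hat{s}$ itself rotates as $v$ moves under $\varphi$, and $\psi_i$, $\gamma_i$ are defined as angles relative to $\hat{s}$. One must carefully separate the rigid rotation of $\hat{s}$ (which is a change of frame and drops out of the intrinsic angle equations) from the rotation of the incoming and outgoing pairs \emph{relative} to $\hat{s}$, and verify that the latter is indeed what the paper's quantities $\psi_i'(0)$ and $\gamma_i'(0)$ measure. Once this is sorted, the remaining work is a routine linearization of a sine/cosine sum.
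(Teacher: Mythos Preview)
Your proposal is correct, and your argument for part (a) is genuinely different from the paper's. The paper proves $\alpha_i'(0)=0$ by induction on $i$: the base case $\alpha_0\equiv 88/21$ is constant by construction, and the step differentiates the recurrence $\alpha_{i+1}=\tfrac{12\cdot 180^\circ}{7}-\beta_i$. When $\alpha_i>180^\circ$ this is immediate from $\beta_i=360^\circ-\alpha_i$; when $\alpha_i<180^\circ$ the symmetric winging formula is not valid off $\varphi=0$, so the paper instead writes the degree-$5$ balancing condition in coordinates, differentiates, and uses the symmetry of the configuration at $\varphi=0$ to extract $\gamma_i'+\delta_i'=0$, hence $\beta_i'=0$. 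Your reflection argument $\alpha_i(\varphi)=\alpha_i(-\varphi)$ bypasses this induction entirely and is cleaner; note it also yields $\gamma_i(\varphi)=\delta_i(-\varphi)$, hence $\gamma_i'(0)+\delta_i'(0)=0$, without computation. For part (b) your route coincides with the paper's: the identity $\gamma_i=\pi-\psi_i$ when $\alpha_i>180^\circ$, and linearizing the balancing condition when $\alpha_i<180^\circ$. The paper's explicit computation there produces the two decoupled equations $\gamma_i'+\delta_i'=0$ and $(\gamma_i'-\delta_i')\cos\gamma_i=2\psi_i'\cos\psi_i$, then evaluates $\cos\gamma_i=-(1/2-\cos(\alpha_i/2))$ and $\cos\psi_i=-\cos(\alpha_i/2)$ at $\varphi=0$ to read off $c_i$; this is exactly what your sketch is pointing toward, and the frame-rotation concern you raise disappears if you measure all five angles from the suspension direction $\hat s$ itself.
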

	
It is important to emphasize that these relationships between derivatives only hold at $\varphi=0$, which is, however, enough for us.

	\begin{figure}[b]
  \centering
  \includegraphics[width=0.6\textwidth]{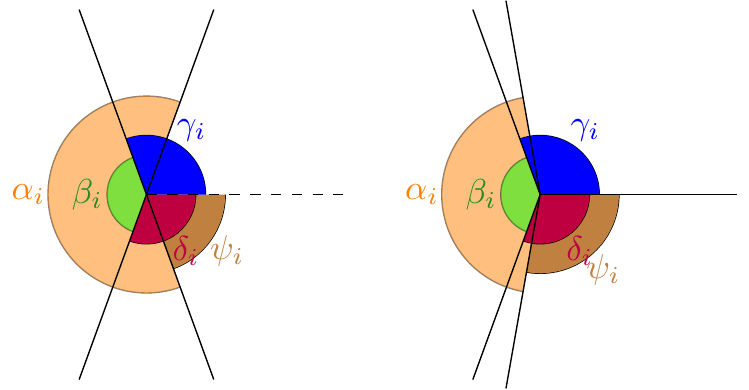}
  \caption{The angle relationships at a vertex of $V_i$ in the case of $\alpha_i>180\degrees$ (left, winging a degree 2 vertex) and in the case of $\alpha_i<180\degrees$ (right, winging a degree 3 vertex)}
  \label{fig:alphapsigamma}
\end{figure}

\begin{proof}
	\begin{enumerate*}
	\item For the base case, note that the inner circle is chosen precisely to ensure that $\alpha_0=88/21$ for any choice of $\varphi$. So $\alpha_0(\varphi)$ is constant and the base case follows.
	
	For the induction step, recall the following formulas (see proof of \ref{thm:anglecalc}) for the symmetric case $\varphi=0$:
	\begin{align*}
		\alpha_{i+1}&=\frac{12\cdot180\degrees}{7}-\beta_i\\
		\beta_{i}&=\begin{cases}
			360\degrees-\alpha_i&\alpha_i>180\degrees\quad\text{(winging of degree 2 vertex)}\\
			2\cdot\arccos(1/2-\cos(\alpha_i/2))&\alpha_i<180\degrees\quad\text{(winging of degree 3 vertex)}\\
		\end{cases}
	\end{align*}
	It is worth checking which of these formulas still apply in the deviated case $\varphi\neq 0$.
	\begin{itemize*}
		\item The relation $\alpha_{i+1}=\frac{12\cdot180\degrees}{7}-\beta_i$ remains unchanged, since the formula is based on the angle sum in the 14-gons during construction. Therefore $\alpha_{i+1}'(0)=-\beta_i'(0)$.
		\item The formula for $\beta_i$ in the case of $\alpha_i>180\degrees$ also applies to the non-symmetric case (see method $\ref{meth:wing2}$). It follows that $\beta_i'(0)=-\alpha_i'(0)=0$ by induction hypothesis and we are done.
		\item The formula for $\beta_i$ in the case of $\alpha_i<180\degrees$, however, cannot be used for the asymmetric situation (as explained in method \ref{meth:wing3}, it only works for the symmetric case).
	\end{itemize*}
	So we are left to show that $\beta_i'(0)=0$ assuming that $\alpha_i'(0)=0$ and $\alpha_i<180\degrees$, but we can't use the given formula.
	
	Instead, lets consider the right of figure \ref{fig:alphapsigamma}, depicting this case. We ran rotate the vertex as depicted so that one edge is pointing to the right. Note that in the symmetric case, i.e. when $\varphi=0$, the picture is symmetric under reflection along the horizontal axis. Generally, though, this is not the case.
	
	The vertex is balanced, so we have
	\begin{align*}
		&[1,0]+[\cos\gamma_i,\sin\gamma_i]+[\cos\delta_i,-\sin\delta_i]+[\cos\psi_i,-\sin\psi_i]+[\cos(\psi_i+\alpha_i),-\sin(\psi_i+\alpha_i)]=[0,0]\\
		\Leftrightarrow&\begin{cases}
	1+\cos\gamma_i+\cos\delta_i+\cos\psi_i+\cos(\psi_i+\alpha_i)=0\\
	\sin\gamma_i-\sin\delta_i-\sin\psi_i-\sin(\psi_i+\alpha_i)=0
	\end{cases}
	\end{align*}
	We can derive everything with respect to $\varphi$ and arrive at
	\begin{align*}
		-\gamma_i'\sin\gamma_i-\delta_i'\sin\delta_i-\psi_i'\sin\psi_i-(\psi_i'+\alpha_i')\sin(\psi_i+\alpha_i)=0\\
		\gamma_i'\cos\gamma_i-\delta_i'\cos\delta_i-\psi_i'\cos\psi_i-(\psi_i'+\alpha_i')\cos(\psi_i+\alpha_i)=0
	\end{align*}
	We are concerned with the derivatives at $\varphi=0$. As pointed out, the picture is symmetric in that case and we get $\gamma_i=\delta_i$ as well as $\psi_i+\alpha_i=2\pi-\psi_i$. By induction hypothesis, we also have $\alpha_i'=0$. So we can simplify to
	\begin{align*}
		-\gamma_i'\sin\gamma_i-\delta_i'\sin\gamma_i-\psi_i'\sin\psi_i-\psi_i'\sin(2\pi-\psi_i)=0\\
		\gamma_i'\cos\gamma_i-\delta_i'\cos\gamma_i-\psi_i'\cos\psi_i-\psi_i'\cos(2\pi-\psi_i)=0
	\end{align*}
	We can simplify further to
	\begin{align*}
		-\gamma_i'\sin\gamma_i-\delta_i'\sin\gamma_i-\psi_i'\sin\psi_i+\psi_i'\sin\psi_i=0\\
		\gamma_i'\cos\gamma_i-\delta_i'\cos\gamma_i-\psi_i'\cos\psi_i-\psi_i'\cos\psi_i=0
	\end{align*}
	Note that $\gamma_i$ can't be a multiple of $\pi$ at $\varphi=0$ since that would imply $\beta_i=0$ or $\beta_i=2\pi$ which is never the case as established previously. So $\sin\gamma_i\neq0$ and the two equations do in fact simplify to
	\begin{align*}
		\gamma_i'+\delta_i'&=0\\
		(\gamma_i'-\delta_i')\cos\gamma_i&=2\psi_i'\cos\psi_i
	\end{align*}
	As is clear from figure \ref{fig:alphapsigamma}, $\beta_i=2\pi-(\gamma_i+\delta_i)$. Therefore $\beta_i'=-(\gamma_i'+\delta_i')=0$ as required.
	\item If $\alpha_i>180\degrees$ consider the left of figure \ref{fig:alphapsigamma} from which it is clear that $\gamma_i=180\degrees-\psi_i$. It follows that $\gamma_i'=-\psi_i'$ as stated (this relationship, in fact, would be true for any $\varphi\in(-\epsilon,\epsilon)$, not just for $\varphi=0$).
	
		If $\alpha_i<180\degrees$, we have just established the following at $\varphi=0$
		\begin{align*}
			\gamma_i'+\delta_i'=0\text{ and }
			(\gamma_i'-\delta_i')\cos\gamma_i=2\psi_i'\cos\psi_i
			\quad\Rightarrow\quad
			2\gamma_i'\cos\gamma_i=2\psi_i'\cos\psi_i
			\quad\Rightarrow\quad
			\gamma_i'=\frac{\cos\psi_i}{\cos\gamma_i}\psi_i'
		\end{align*}
		Furthermore, again due to symmetries at $\varphi=0$
		\begin{align*}
			\cos\gamma_i&=\cos(\pi-\beta_i/2)=-\cos(\beta_i/2)=-\cos\left(\frac{2\arccos(1/2-\cos(\alpha_i/2))}{2}\right)=-(1/2-\cos\alpha_i/2)\\
			\cos\psi_i&=\cos(\pi-\alpha_i/2)=-\cos\alpha_i/2
		\end{align*}
		Combining these equations, we arrive at $\gamma_i'=c_i\psi_i'$ with $c_i=\displaystyle\frac{2\cos\alpha_i/2}{1-2\cos\alpha_i/2}$.
	\end{enumerate*}
\end{proof}

\subsection{$\tau_i$ and $\sigma_i$}

$\tau_i$ is the \enquote{reference angle} from which the suspension angle $\varphi_i$ is measured. Consider the $14$-gon formed by the two outer sides of each of the seven equilateral triangles built on the outer circle. The interior angle at seven corners is $\pi/3$, the interior angle at the other seven corners is $29\pi/21$. These are the values for $2\cdot\tau_i$ even and odd respectively.

The angle $\sigma_{i+1}$ is depicted in figure \ref{fig:phi12details}. It changes under deviation, but at $\varphi=0$ it can be directly calculated from the sequence of $x_i$. If $i$ is even, consider figure \ref{fig:sigmaatzero}. Note that $\sigma_{i+1}$ is one of the angles in a triangle with side $x_i$ and angle $\pi/7$. Since $\sigma_{i+1}$ is the angle at a vertex of the \emph{outer circle}, we know one more side of the triangle. Recall that the inner circle is fixed at radius $1$ and consider figure \ref{fig:circles}, giving the other side as $\frac{\sin\alpha_0/2}{\sin(\pi/7+\alpha_0/2)}$. Using the law of sines, we get:
\begin{align*}
	\sin\sigma_{i+1}=x_i\sin(\sigma_{i+1}+\pi/7)\frac{\sin(\pi/7+\alpha_0/2)}{\sin\alpha_0/2}
\end{align*}
If $i$ is odd, a similar argument yields
\begin{align*}
	\sin\sigma_{i+1}=x_i\sin(\sigma_{i+1}+\pi/7)\frac{\sin(\pi/7+\alpha_0/2)}{\sin\alpha_0/2}\frac{\sin\pi/6}{\sin(\pi/7+\pi/6)}
\end{align*}
which is based on the fact that the additional vertex of $\sigma_i$ is at radius $\frac{\sin\alpha_0/2}{\sin(\pi/7+\alpha_0/2)}\frac{\sin(\pi/7+\pi/6)}{\sin\pi/6}$.

Since $\sigma_{i+1}<90\degrees$, we can solve both equations for $\sigma_{i+1}$ and get
\begin{align*}
	\sigma_{i+1}=\begin{cases}\displaystyle
		\arctan\frac{\sin\pi/7}{\frac{\sin\alpha_0/2}{x_i\sin(\pi/7+\alpha_0/2)}-\cos\pi/7}&i\text{ even}\\
		\displaystyle
		\arctan\frac{\sin\pi/7}{\frac{\sin\alpha_0/2}{x_i\sin(\pi/7+\alpha_0/2)}\frac{\sin(\pi/7+\pi/6)}{\sin\pi/6}-\cos\pi/7}&i\text{ odd}
	\end{cases}
\end{align*}

\subsection{The formula for $\psi_i'(0)$}

We are left to establish the following

\begin{lemma}
	For all $i=0,1,2,\dots$ at $\varphi=0$, we have $\psi_{i+1}'=b_i\gamma_i'+a_i\varphi_i'$ where
	\begin{align*}
		b_i&=-\frac{\tan\frac{\alpha_{i+1}}{2}}{\tan(\frac{\alpha_{i+1}}{2}-\sigma_{i+1})}\qquad\qquad\\
		a_i&=-\tan\frac{\alpha_{i+1}}{2}\left(\frac{\sin(\pi/7+\sigma_{i+1})\sin(\pi/7+\sigma_{i+1}+\tau_i)}{\sin\tau_i}+\frac{\frac12
	-\frac{\sin(\tau_i+2\pi/7+2\sigma_{i+1})}{2\sin\tau_i}}{\tan(\frac{\alpha_{i+1}}{2}-\sigma_{i+1})}\right)
	\end{align*}
\end{lemma}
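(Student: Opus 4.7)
The plan is to carry out a direct trigonometric calculation analogous to those for $\psi_0'(0)$ and $\gamma_i'(0)$ in the previous lemma, but now working inside the upper quadrilateral of Figure~\ref{fig:varphistep} instead of at a single vertex. First I would fix the picture: the upper quadrilateral has corners at the two hooks (or at the single hook together with a reference point on the outer circle), at the vertex $v_{i+1}$ of $V_{i+1}$, and at a third auxiliary point $X_{i+1}$ used to measure $\varphi_{i+1}$ (the third vertex of the equilateral triangle, respectively the centre $O$, depending on the parity of $i+1$). The quantities $\tau_i,\sigma_{i+1},\alpha_{i+1}$ are fixed by the combinatorics and by the position of $v_{i+1}$ on the outer geometry (recall that $\sigma_{i+1}$ is computed from $x_i$ via the law of sines as derived in the preceding subsection), whereas $\varphi_i$ and $\gamma_i$ are the only two variables that actually move under a change of $\varphi$.

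Next I would express $\psi_{i+1}$ as a smooth function $F(\varphi_i,\gamma_i)$ of these two variables with everything else held at its symmetric value. The key observation is that $v_{i+1}$ is the intersection of the outgoing wing from $v_i$ (whose position depends on $\varphi_i$ and whose direction depends on $\gamma_i$) with the line that carries the suspension edge at level $i+1$ (whose direction is determined by $\alpha_{i+1}$ and $\sigma_{i+1}$). Applying the law of sines in the triangle bounded by the two hooks and $v_{i+1}$, and then reading off the angle at $v_{i+1}$ between the incoming wing and the suspension edge, I obtain $\psi_{i+1}$ in closed form. At $\varphi=0$ the configuration has a reflection symmetry along the radial axis through $v_{i+1}$, so the trigonometric expressions collapse enough that the partial derivatives
\begin{equation*}
b_i \;=\; \frac{\partial F}{\partial \gamma_i}\bigg|_{\varphi=0},\qquad a_i \;=\; \frac{\partial F}{\partial \varphi_i}\bigg|_{\varphi=0}
\end{equation*}
can be computed explicitly. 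The factor $-\tan(\alpha_{i+1}/2)/\tan(\alpha_{i+1}/2-\sigma_{i+1})$ in $b_i$ is precisely the ratio by which an infinitesimal rotation of the outgoing wing changes the intersection point's angular coordinate, and then converts that displacement into a change of the incoming angle at $v_{i+1}$.

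The formula for $a_i$ has two summands because varying $\varphi_i$ has two independent effects on $v_{i+1}$: it translates $v_i$ along the arc of radius $x_i$ (the first term, involving $\sin(\pi/7+\sigma_{i+1})\sin(\pi/7+\sigma_{i+1}+\tau_i)/\sin\tau_i$, coming from propagating this translation to $v_{i+1}$ by the law of sines in the quadrilateral), and it also rotates the reference frame used to measure $\psi_{i+1}$ relative to the fixed outer geometry (the second term, of the form $(\tfrac12-\sin(\tau_i+2\pi/7+2\sigma_{i+1})/(2\sin\tau_i))/\tan(\alpha_{i+1}/2-\sigma_{i+1})$). Both contributions are then pulled back to angular changes at $v_{i+1}$ by the same factor $-\tan(\alpha_{i+1}/2)$ that appeared in $b_i$, which is why this factor is common to both coefficients.

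The main obstacle will be the bookkeeping: four angles ($\varphi_i,\gamma_i,\psi_{i+1},\varphi_{i+1}$) and two parities interact through the $14$-gon angle sum and through the auxiliary angles $\tau_i,\sigma_{i+1}$, and one must make sure that the contributions from the lower quadrilateral (already encoded via the relation $\varphi_{i+1}'=\varphi_i'+\gamma_i'+\psi_{i+1}'$) are not double-counted when differentiating inside the upper quadrilateral. Once the symmetric configuration is drawn carefully and the two variables $\varphi_i,\gamma_i$ are isolated as the only degrees of freedom, the differentiation itself is routine and yields the stated $a_i$ and $b_i$. Combined with $\gamma_i'=c_i\psi_i'$ from Lemma~\ref{lem:alphagammaprops} and the recursion $\varphi_{i+1}'=\varphi_i'+\gamma_i'+\psi_{i+1}'$, this completes the explicit recursion used to generate the sequence $\varphi_i'(0)$ studied numerically in the preceding subsection.
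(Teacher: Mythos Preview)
Your sketch has the right spirit but contains a genuine conceptual error that would derail the computation. You describe $v_{i+1}$ as ``the intersection of the outgoing wing from $v_i$ \dots\ with the line that carries the suspension edge at level $i+1$''. This is not how $v_{i+1}$ is defined: it is the intersection of \emph{two} outgoing wings, one from $v_i$ and one from the adjacent vertex of $V_i$ (the rotated copy). The suspension edge at level $i+1$ is drawn \emph{after} $v_{i+1}$ has been located, so it cannot be used as one of the two lines defining $v_{i+1}$. Consequently your reduction to a function $F(\varphi_i,\gamma_i)$ of only two variables is not justified: the relevant hexagon (Figure~\ref{fig:phi12details}) is parametrised by $(L_i,\varphi_i,\gamma_i,\delta_i)$, where $\delta_i$ is the wing angle at the \emph{other} $V_i$-vertex. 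The paper eliminates $L_i$ only at $\varphi=0$ by the reflection symmetry (so $\partial\psi_{i+1}/\partial L_i=0$ there), and eliminates $\delta_i$ only via the relation $\delta_i'=-\gamma_i'$ from Lemma~\ref{lem:alphagammaprops}.

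This matters quantitatively. The paper computes
\[
\frac{\partial\psi_{i+1}}{\partial\gamma_i}=-\tfrac12\Bigl(\tfrac{\tan(\alpha_{i+1}/2)}{\tan(\alpha_{i+1}/2-\sigma_{i+1})}+1\Bigr),\qquad
\frac{\partial\psi_{i+1}}{\partial\delta_i}=\tfrac12\Bigl(\tfrac{\tan(\alpha_{i+1}/2)}{\tan(\alpha_{i+1}/2-\sigma_{i+1})}-1\Bigr),
\]
and it is only the \emph{difference} that gives $b_i$; your $b_i=\partial F/\partial\gamma_i$ alone would produce the wrong value. Likewise, for $a_i$ the paper does not merely ``propagate a translation by the law of sines'': it differentiates the law-of-sines identity $c_i\sin\xi_i\sin(\cdots)=d_i\sin\zeta_i\sin\psi_{i+1}$ with respect to $\varphi_i$, then uses the symmetric identities $c_i=d_i$, $\xi_i=\zeta_i$, $c_i'=-d_i'$, $\xi_i'=-\zeta_i'$ at $\varphi=0$, and finally computes $c_i'/c_i$ from the law of cosines and $\xi_i'$ from the law of tangents in the triangle with sides $c_i,K,L_i$. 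These are the concrete steps that produce the two summands in $a_i$; your narrative about ``two independent effects'' is plausible heuristics but does not substitute for identifying the triangles and the specific trigonometric laws used.
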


\begin{figure}
  \centering
	\includegraphics{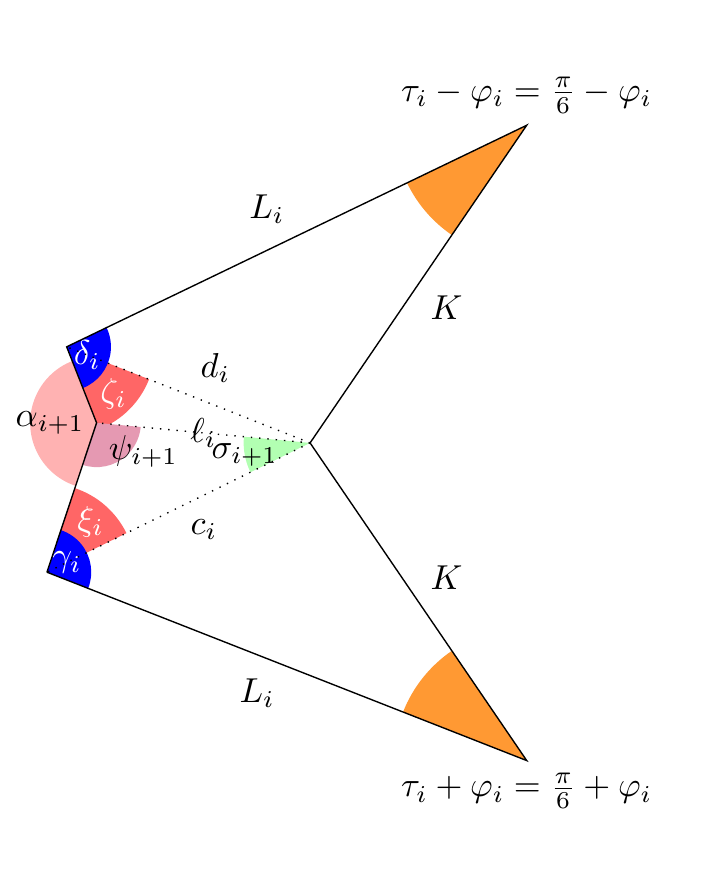}
  \caption{A detailed overview of a part of figure \ref{fig:varphistep}}
  \label{fig:phi12details}
\end{figure}

\begin{proof}
Consider the two cases in figure \ref{fig:varphistep}. We will cover the first case in detail ($i$ even). The other case, as given the lower of the two figures, can be deduced in the same way (the pictures only differ in the size of the angles, the underlying setup of polygons is the same). For this first case, figure \ref{fig:phi12details} gives a more detailed overview.

$\psi_i$ is a particular angle in the given hexagon. The angles $\kappa_i$ and $\tau_i$ don't change under $\varphi$ and neither does $K$ (it is the length of the sides of the equilateral triangles over the outer circle). Note the following: If we consider these angles and lengths
\begin{itemize*}
	\item The lengths $L_i$
	\item The angles $\varphi_i$, $\gamma_i$, $\delta_i$
\end{itemize*}
then there is a one-to-one correspondence (within an open set) between tuples $(L_i,\varphi_i,\gamma_i,\delta_i)$ and hexagons.

Also note that the pictures is symmetric at $\varphi=0$, implying $\gamma_i(0)=\delta_i(0)$ and $\varphi_i(0)=0$. Since these quantities uniquely define the hexagon and since $\psi_{i+1}$ is an angle defined by the hexagon, we arrive at
\begin{align*}
	\psi_{i+1}'=\frac{\partial \psi_{i+1}}{\partial L_i}L_i'+\frac{\partial \psi_{i+1}}{\partial \gamma_i}\gamma_i'+\frac{\partial \psi_{i+1}}{\partial \delta_i}\delta_i'+\frac{\partial \psi_{i+1}}{\partial \varphi_i}\varphi_i'
\end{align*}
As usual, we consider these derivatives at $\varphi=0$. The proof of the previous lemma established that in that case $\gamma_i'=-\delta_i'$. Also note that in the symmetric picture, a change in $L_i$ will not affect $\psi_i$. Therefore $\frac{\partial \psi_{i+1}}{\partial L_i}$ at $\varphi=0$. Combining all these, we arrive at
\begin{align*}
	\psi_{i+1}'=\left(\frac{\partial \psi_{i+1}}{\partial \gamma_i}-\frac{\partial \psi_{i+1}}{\partial \delta_i}\right)\gamma_i'+\frac{\partial \psi_{i+1}}{\partial \varphi_i}\varphi_i'
\end{align*}
So we are left to show that
\begin{enumerate*}
	\item $b_i=\left(\frac{\partial \psi_{i+1}}{\partial \gamma_i}-\frac{\partial \psi_{i+1}}{\partial \delta_i}\right)=-\frac{\tan\frac{\alpha_{i+1}}{2}}{\tan(\frac{\alpha_{i+1}}{2}-\sigma_{i+1})}$
	\item $a_i=\frac{\partial \psi_{i+1}}{\partial \varphi_i}=-\frac{\tan\frac{\alpha_{i+1}}{2}}{\sin\tau_i}\left(\sin(\pi/7+\sigma_{i+1})\sin(\pi/7+\sigma_{i+1}+\tau_i)+\frac{-\sin\tau_i
	+\sin(\tau_i+2\pi/7+2\sigma_{i+1})}{2\tan(\frac{\alpha_{i+1}}{2}-\sigma_{i+1})}\right)$
\end{enumerate*}
To do so, we will make extensive use of trigonometric identities and laws.

The law of sines provides
\begin{align*}
	&c_i\frac{\sin\xi_i}{\sin\psi_{i+1}}=\ell_i=d_i\frac{\sin\zeta_i}{\sin(4\pi-\gamma_i-\delta_i-(\tau_i+\varphi_i)-(\tau_i-\varphi_i)-(2\pi-\kappa_i)-\psi_{i+1})}\\
	\Rightarrow&c_i\sin\xi_i\sin(4\pi-\gamma-\delta-2\tau_i-(2\pi-\kappa_i)-\psi_{i+1})=d_i\sin\zeta_i\sin\psi_{i+1}
\end{align*}

\begin{enumerate*}
\item Deriving with respect to $\gamma_i$ yields
\begin{align*}
	\text{derivatives w.r.t. }\gamma_i\left\{
	\begin{array}{lll}
	c_i\cos\xi_i\ \xi_i'\sin(4\pi-\gamma_i-\delta_i-2\tau_i-(2\pi-\kappa_i)-\psi_{i+1})\\
	+c_i\sin\xi_i\cos(4\pi-\gamma_i-\delta_i-2\tau_i-(2\pi-\kappa_i)-\psi_{i+1})\ (-1-\psi_{i+1}')\\
	=d_i\sin\zeta_i\ \cos\psi_{i+1}\ \psi_{i+1}'
	\end{array}
	\right.
\end{align*}
We are considering values at $\varphi=0$. Due to symmetry
\begin{align*}
	\xi_i&=\zeta_i
	&c_i&=d_i
	&\psi_{i+1}&=4\pi-\gamma-\delta-2\tau_i-(2\pi-\kappa_i)-\psi_{i+1}=\pi-\frac{\alpha_{i+1}}{2}
\end{align*}
Also, $\xi_i=\gamma_i+\theta$ where $\theta$ is an angle that doesn't change under $\gamma$. Therefore (since we are currently considering derivatives with respect to $\gamma_i$) we have $\xi_i'=1$. Combining all these, we arrive at:
\begin{align*}
	\text{derivatives w.r.t. }\gamma_i\left\{
	\begin{array}{lll}
	c_i\cos\xi_i\ \sin\frac{\alpha_{i+1}}{2}-c_i\sin\xi_i\ \cos\frac{\alpha_{i+1}}{2}\ (-1-\psi_{i+1}')\\
	=-c_i\sin\xi_i\ \cos\frac{\alpha_{i+1}}{2}\ \psi_{i+1}'
	\end{array}
	\right.
\end{align*}
This can be simplified to
\begin{align*}
	\text{derivatives w.r.t. }\gamma_i\left\{
	\begin{array}{lll}
	\cot\xi_i\ \tan\frac{\alpha_{i+1}}{2}-(-1-\psi_{i+1}')=-\psi_{i+1}'
	\end{array}
	\right.
\end{align*}
So we finally arrive at
\begin{align*}
	\frac{\partial\psi_{i+1}}{\partial\gamma_i}=-\frac12\left(\cot\xi_i\tan\frac{\alpha_{i+1}}{2}+1\right)
\end{align*}
Note that at $\varphi=0$, we have $\pi=\xi_i+\sigma_{i+1}+\psi_{i+1}=\xi_i+\sigma_{i+1}+\pi-\frac{\alpha_{i+1}}{2}$ and therefore $\cot\xi_i=\cot(\frac{\alpha_{i+1}}{2}-\sigma_{i+1})$, so we can rewrite this as
\begin{align*}
	\frac{\partial\psi_{i+1}}{\partial\gamma_i}=-\frac12\left(\frac{\tan\frac{\alpha_{i+1}}{2}}{\tan(\frac{\alpha_{i+1}}{2}-\sigma_{i+1})}+1\right)
\end{align*}
Very similar deductions based on derivatives with respect to $\delta_i$ allow us to arrive at
\begin{align*}
	\frac{\partial\psi_{i+1}}{\partial\delta_i}=\frac12\left(\frac{\tan\frac{\alpha_{i+1}}{2}}{\tan(\frac{\alpha_{i+1}}{2}-\sigma_{i+1})}-1\right)
\end{align*}
The claim for $b_i$ follows.

\item We return to the identity based on the law of sines from above, but will now consider derivatives with respect to $\varphi_i$. Note that this time, $c_i$, $d_i$, $\xi_i$ and $\zeta_i$ vary whereas $\gamma_i$ and $\delta_i$ are constant.
\begin{align*}
	\text{derivatives w.r.t. }\varphi_i\left\{
	\begin{array}{lll}
	c_i'\sin\xi_i\sin(4\pi-\gamma_i-\delta_i-2\tau_i-(2\pi-\kappa_i)-\psi_{i+1})\\
	+c_i\cos\xi_i\ \xi_i'\sin(4\pi-\gamma_i-\delta_i-2\tau_i-(2\pi-\kappa_i)-\psi_{i+1})\\
	+c_i\sin\xi_i\cos(4\pi-\gamma_i-\delta_i-2\tau_i-(2\pi-\kappa_i)-\psi_{i+1})\ (-\psi_{i+1}')\\
	=d_i'\sin\zeta_i\sin\psi_{i+1}+d_i\cos\zeta_i\ \zeta_i'\sin\psi_{i+1}+d_i\sin\zeta_i\cos\psi_{i+1}\ \psi_{i+1}'
	\end{array}
	\right.
\end{align*}
We are considering values at $\varphi=0$. Due to symmetry
\begin{align*}
	\xi_i&=\zeta_i
	&c_i&=d_i
	&c_i'&=-d_i'
	&\xi_i'&=-\zeta_i'
	&\psi_{i+1}&=4\pi-\gamma_i-\delta_i-2\tau_i-(2\pi-\kappa_i)-\psi_{i+1}=\pi-\frac{\alpha_{i+1}}{2}
\end{align*}
Combining all these, we arrive at
\begin{align*}
	\text{derivatives w.r.t. }\varphi_i\left\{
	\begin{array}{lll}
	c_i'\sin\xi_i\sin\frac{\alpha_{i+1}}{2}+c_i\cos\xi_i\ \xi_i'\sin\frac{\alpha_{i+1}}{2}+c_i\sin\xi_i\cos\frac{\alpha_{i+1}}{2}\ \psi_{i+1}'\\
	=-c_i'\sin\xi_i\sin\frac{\alpha_{i+1}}{2}-c_i\cos\xi_i\ \xi_i'\sin\frac{\alpha_{i+1}}{2}-c_i\sin\xi_i\cos\frac{\alpha_{i+1}}{2}\ \psi_{i+1}'
	\end{array}
	\right.
\end{align*}
And therefore
\begin{align*}
	\text{derivatives w.r.t. }\varphi_i\left\{
	\begin{array}{lll}
	c_i'\sin\xi_i\sin\frac{\alpha_{i+1}}{2}+c_i\cos\xi_i\ \xi_i'\sin\frac{\alpha_{i+1}}{2}+c_i\sin\xi_i\cos\frac{\alpha_{i+1}}{2}\ \psi_{i+1}'=0
	\end{array}
	\right.
\end{align*}
This can be solved for
\begin{align*}
	\text{derivatives w.r.t. }\varphi_i\left\{
	\begin{array}{lll}
	\psi_{i+1}'=-\tan\frac{\alpha_{i+1}}{2}\left(\frac{c_i'}{c_i}+\cot\xi_i\ \xi_i'\right)
	\end{array}
	\right.
\end{align*}
We now have to find $c_i'/c_i$ and $\xi_i'$. We start with $c_i'/c_i$. Recall that we are still considering derivatives with respect to $\varphi_i$. The law of cosines gives us
\begin{align*}
	\text{derivatives w.r.t. }\varphi_i\left\{
	\begin{array}{lll}
	c_i^2=K^2+L_i^2-2KL_i\cos(\tau_i+\varphi_i)\\
	\Rightarrow2c_ic_i'=2KL_i\sin(\tau_i+\varphi_i)\\
	\Rightarrow\frac{c_i'}{c_i}=\frac{K}{c_i}\frac{L_i}{c_i}\sin(\tau_i+\varphi_i)
	\end{array}
	\right.
\end{align*}
At $\varphi=0$, we have
\begin{align*}
	\varphi_i&=0
	&\frac{K}{c_i}&=\frac{\sin(\gamma_i-\xi_i)}{\sin\tau_i}
	&\frac{L_i}{c_i}&=\frac{\sin(\pi-(\gamma_i-\xi_i)-\tau_i)}{\sin\tau_i}=\frac{\sin(\gamma_i-\xi_i+\tau_i)}{\sin\tau_i}
\end{align*}
We finish with finding $\xi_i'$ (still as a derivative with respect to $\varphi_i$). We invoke the law of tangents for the triangle with sides $c_i$, $K$, $L_i$.
\begin{align*}
	\frac{L_i-K}{L_i+K}=\frac{\tan(\frac12(\pi-(\tau_i+\varphi_i)-(\gamma_i-\xi_i)-(\gamma_i-\xi_i))}{\tan(\frac12(\pi-(\tau_i+\varphi_i)-(\gamma_i-\xi_i)+(\gamma_i-\xi_i))}=\frac{\tan(\frac12(\pi-\tau_i-\varphi_i-2(\gamma_i-\xi_i)))}{\tan(\frac12(\pi-\tau_i-\varphi_i))}
\end{align*}
Note that the left-hand side doesn't change under a change of $\varphi_i$. So if we derive (and subsequently multiply by the denominator), we get
\begin{align*}
	\text{derivatives w.r.t. }\varphi_i\left\{
	\begin{array}{lll}
	0=\sec^2(\frac12(\pi-\tau_i-\varphi_i-2(\gamma_i-\xi_i)))
	(-1+2\xi_i')
	\tan(\frac12(\pi-\tau_i-\varphi_i))\\
	+\tan(\frac12(\pi-\tau_i-\varphi_i-2(\gamma_i-\xi_i)))\sec^2(\frac12(\pi-\tau_i-\varphi_i))
	\end{array}
	\right.
\end{align*}
Elementary trigonometric identities lead us to
\begin{align*}
	\text{derivatives w.r.t. }\varphi_i\left\{
	\begin{array}{lll}
	\displaystyle\xi_i'=\frac12-\frac{\sin(\pi-\tau_i-\varphi_i-2(\gamma_i-\xi_i))}{2\sin(\pi-\tau_i-\varphi_i)}
	\end{array}
	\right.
\end{align*}
At $\varphi=0$, we have $\varphi_i=0$. So we get
\begin{align*}
	\text{derivatives w.r.t. }\varphi_i\left\{
	\begin{array}{lll}
	\displaystyle\xi_i'=\frac12
	-\frac{\sin(\tau_i+2(\gamma_i-\xi_i))}{2\sin\tau_i}
	\end{array}
	\right.
\end{align*}
We now have explicit formulas for $c_i'/c_i$ as well as $\xi_i'$. Combining the above results and using the following two identities at $\varphi=0$ (see figure \ref{fig:sigmaatzero}):
\begin{align*}
	\xi_i&=\pi-\psi_{i+1}-\sigma_{i+1}=\frac{\alpha_{i+1}}{2}-\sigma_{i+1}
	&\pi-(\gamma_i-\xi_i)+\pi/7+\sigma_{i+1}=\pi\ \Rightarrow\ \gamma_i-\xi_i&=\pi/7+\sigma_{i+1}
\end{align*}
we can finally write
\begin{align*}
	&\frac{\partial\psi_{i+1}}{\partial\varphi_i}=-\tan\frac{\alpha_{i+1}}{2}\left(\frac{\sin(\pi/7+\sigma_{i+1})\sin(\pi/7+\sigma_{i+1}+\tau_i)}{\sin\tau_i}+\frac{\frac12
	-\frac{\sin(\tau_i+2\pi/7+2\sigma_{i+1})}{2\sin\tau_i}}{\tan(\frac{\alpha_{i+1}}{2}-\sigma_{i+1})}\right)
\end{align*}
\end{enumerate*}
\end{proof}

\begin{figure}
  \centering
	\includegraphics{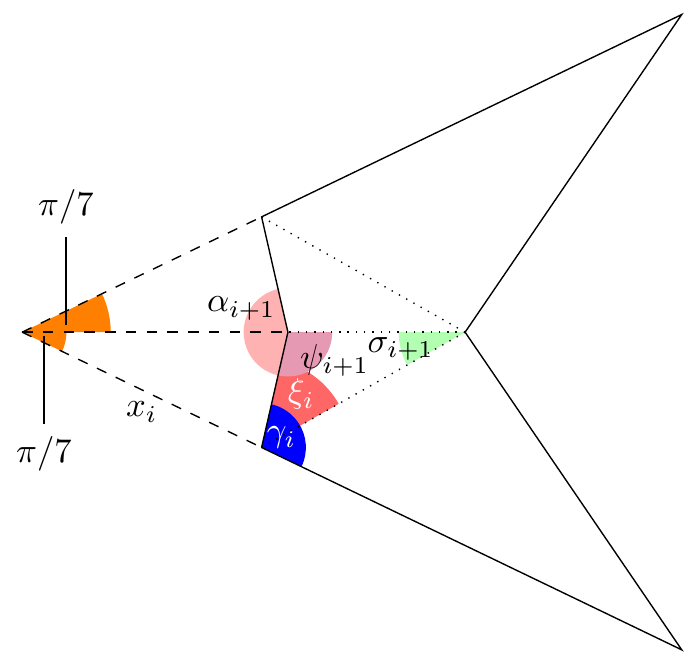}
  \caption{A figure showing the relationship between angles at $\varphi=0$ for $i$ even.}
  \label{fig:sigmaatzero}
\end{figure}

\section{Acknowledgements}

This research has been partially supported by A. Nabutovsky's NSERC Discovery grant and F. Parsch's Vanier Canada Graduate Scholarship. The authors would like to thank Adam Stinchcombe for valuable help with variable precision arithmetic in MATLAB.

\end{document}